\documentclass[a4paper,12pt,reqno]{amsart}
\usepackage{amssymb,amsfonts,amsmath,amscd,amsthm,latexsym,euscript}
\usepackage{times}

\usepackage{combelowedit} 

\usepackage{enumitem}
\usepackage{tikz}

\usepackage[bookmarks=false]{hyperref}
\hypersetup{%
    colorlinks=true,        
    linkcolor=blue,          
    citecolor=blue,         
    urlcolor=blue           
    }

\usepackage{xcolor}
\usepackage{listings} 
\definecolor{codegreen}{rgb}{0,0.6,0}
\definecolor{codegray}{rgb}{0.5,0.5,0.5}
\definecolor{codepurple}{rgb}{0.58,0,0.82}
\definecolor{backcolour}{rgb}{0.95,0.95,0.92}
\lstdefinestyle{mystyle}{
    backgroundcolor=\color{backcolour},   
    commentstyle=\color{codegreen},
    keywordstyle=\color{magenta},
    numberstyle=\tiny\color{codegray},
    stringstyle=\color{codepurple},
    basicstyle=\ttfamily\footnotesize,
    breakatwhitespace=false,         
    breaklines=true,                 
    captionpos=b,                    
    keepspaces=true,                 
    numbers=left,                    
    numbersep=5pt,                  
    showspaces=false,                
    showstringspaces=false,
    showtabs=false,                  
    tabsize=2
}
\newtheorem{theor}{Theorem}

\theoremstyle{definition}

\newtheorem{prop}[theor]{Proposition}
\newtheorem{theordef}[theor]{Theorem--Definition}
\newtheorem{lem}[theor]{Lemma}

\newtheorem{manuallemmainner}{Lemma} 
\newenvironment{manuallemma}[1]{%
  \IfBlankTF{#1}
    {\renewcommand{\themanuallemmainner}{\unskip}}
    {\renewcommand\themanuallemmainner{#1}}%
  \manuallemmainner
}{\endmanuallemmainner}

\newtheorem{cor}[theor]{Corollary}
\newtheorem{conjecture}[theor]{Conjecture}
\newtheorem{define}{Definition}

\newtheorem*{resprob}{Research problem}
\newtheorem*{comm}{Comment} 

\newtheorem{notation}{Notation}
\newtheorem{problem}{Problem}
\newtheorem{open}[problem]{Open problem}
\newtheorem{ex}{Example}
\theoremstyle{remark}
\newtheorem{rem}{Remark}
\newtheorem{note}[rem]{Note}

\theoremstyle{definition}
\theoremstyle{definition}

\newcommand{\BBR}{\mathbb{R}}\newcommand{\BBC}{\mathbb{C}}
\newcommand{\BBN}{\mathbb{N}} \newcommand{\BBNo}{\BBN_{\geqslant 0}}

\newcommand{\BBZ}{\mathbb{Z}}
\newcommand{\BBQ}{\mathbb{Q}}

\newcommand{\EuA}{{{\EuScript A}}}
\newcommand{\EuL}{{{\EuScript L}}}

\newcommand{\cA}{{{\EuScript A}}}

\newcommand{\cX}{{\EuScript X}}    

\newcommand{\Bone}{{\boldsymbol{1}}}
\newcommand{\ba}{{\boldsymbol{a}}}
\newcommand{\bb}{{\boldsymbol{b}}}
\newcommand{\bc}{{\boldsymbol{c}}}
\newcommand{\bd}{{\boldsymbol{d}}}
\newcommand{\be}{{\boldsymbol{e}}}

\newcommand{\bbf}{{\boldsymbol{f}}}
\newcommand{\bi}{{\boldsymbol{i}}}
\newcommand{\bk}{{\boldsymbol{k}}}
\newcommand{\bn}{{\boldsymbol{n}}}
\newcommand{\bm}{{\boldsymbol{m}}}

\newcommand{\br}{{\boldsymbol{r}}}

\newcommand{\bivec}{{\vec{\boldsymbol{i}}}}
\newcommand{\brvec}{{\vec{\boldsymbol{r}}}}
\newcommand{\bnvec}{{\vec{\boldsymbol{n}}}}
\newcommand{\bmvec}{{\vec{\boldsymbol{m}}}}
\newcommand{\bkvec}{{\vec{\boldsymbol{k}}}}

\newcommand{\bs}{{\boldsymbol{s}}}

\newcommand{\bx}{{\boldsymbol{x}}}

\newcommand{\gothg}{\mathfrak{g}}

\newcommand{\bigzero}{\makebox(0,0){\text{\huge0}}}

\DeclareMathOperator{\Van}{Van}

\DeclareMathOperator{\Span}{span}

\newcommand{\wedges}{\wedge\cdots\wedge}

\newcommand{\by}[1]{\textrm{{#1}}}
\newcommand{\jour}[1]{\textit{{#1}}}
\newcommand{\vol}[1]{\textbf{{#1}}}
\newcommand{\book}[1]{\textit{{#1}}}

\begin{document}

\title[Polynomial ${\EuScript A} \subseteq \Bbbk{[x^1,\ldots,x^d]}$ 
with Wronskian as
$N$\nobreakdash-ary Lie bracket: $\dim{\EuScript A} < \infty$\,?]
{Explicit class of finite\/-\/dimensional polynomial algebras\\[3pt] with Wronskians over~$\mathbb{R}^d$ as $N$\nobreakdash-ary Lie brackets: beyond $\mathfrak{sl}(2)$}

\author[M.\,G.\,\cb{K}\=eni\cb{n}\v{s}]{Markuss G. \cb{K}\=eni\cb{n}\v{s}${}^{*,\star}$}
\thanks{${}^{*}$\:\textit{Address}:\quad 
Bernoulli Institute for Mathematics, Computer Science \&\ 
Artificial Intelligence, 
University of Groningen, P.O.\,Box\:407, 9700\,AK Groningen, 
the Netherlands.
}
\thanks{${}^{\star}$\:\textit{Address for correspondence} (after 1 September 2026):
ETH Z\"{u}rich -- Department of Mathematics, 
R\"{a}mistrasse 101, CH-8092 Z\"{u}rich, Switzerland.%
}

\author[A.\,V.\,Kiselev]{Arthemy V.\ Kiselev${}^{*,\S}$}
\thanks{${}^{\S}$\:Corresponding author. \textit{E-mail}: \texttt{A.V.Kiselev\symbol{"40}rug.nl}%
}

\dedicatory{\textup{Based on the talks given 
by the first author at the Algebra seminar (Bernoulli Institute, Groningen,\\[1pt] the Netherlands)
and 
by the last author at the Prague Mathematical Physics seminar (Charles\\[1pt] University, Czech Republic) and at the Mathematics seminar (IH\'ES, Bures\/-\/sur\/-\/Yvette, France).}}

\subjclass[2010]{05E15, 15A15, 17A42, 17B65; 
secondary 05A10, 05A19, 58H15}

\keywords{Determinant identities,
differential\/-\/polynomial identities,
finite\/-\/dimensional algebra,
iterated commutators,
multivariate Vandermonde determinant,
multivariate Wronskian determinant,
$N$\nobreakdash-ary Lie bracket,
polynomial algebra,
$\mathfrak{sl}(2)$, 
Vandermonde determinant factorisation%
}

\date{10 July 2026}

\begin{abstract}
Lie algebra $\mathfrak{sl}(2)$ can be realised by vector fields on $\mathbb{R}^1\ni x$ with polynomial coefficients $1$,\ $-2x$,\ $-x^2$; their Wronskian determinants yield the Lie bracket. Likewise, the monomials $1$,\ $\ldots$,\ $x^k/k!$,\ $\ldots$, $x^N/N!$ span finite\/-\/dimensional strong homotopy (SH) Lie algebras with the Wronskians $\mathbf{1} \wedge \partial_x \wedge \ldots \wedge \partial_x^{N-1}$ as the $N$\nobreakdash-ary brackets.
Over dimension $d=2$ with $\mathbb{R}^2\ni(x,y)$ and for the complete generalised Wronskian $W^{k=1}_{d=2}=\mathbf{1}\wedge \partial_x \wedge \partial_y$ of differential order $k=1$ as the ternary bracket, the finite\/-\/dimensional polynomial SH-\/Lie algebras are spanned by $\langle 1$,\ $x$,\ $y$,\ $p\rangle$ with $p\in\{x^2$,\ $xy$,\ $y^2\}$.

We explicitly describe all finite\/-\/dimensional polynomial SH-\/Lie algebras $
\Bbbk_k[\bx]\subseteq{\EuScript A}\subseteq\Bbbk[x^1,\ldots,x^d]$ (over $\Bbbk=\mathbb{R}$ or~$\mathbb{C}$) with the complete generalised Wronskians $W_{d\geqslant 1}^{k\geqslant 1}$ of order~$k$ as $N$\nobreakdash-ary brackets: $N=\binom{d+k}{d}$. We obtain a factorisation formula for the generalised Vandermonde determinants which show up in the structure constants of the polynomial algebras~${\EuScript A}$.
\end{abstract}
\maketitle

\subsection*{Introduction}\label{SecIntroduction}
\noindent%
On the real line~$\BBR^1$, take finitely many vector fields $\vec{\cX}_j\in\mathbb{D}_{p=1}(\BBR^1)$ with smooth, in particular polynomial, coefficients $X_j(x)$, and generate the Lie algebra by taking the iterated commutators. Kirillov asked: how fast does it grow\,? Specifically, what is the dimension of the subspace spanned by all products of generating elements~$X_j$, with number of factors${}\leqslant n$. Kontsevich and Molev joined this investigation of Lie algebras of intermediate growth: see~\cite{KirillovKontsevich} 
and~\cite{KirillovKontsevichMolev}. 

Suppose now that the Lie algebra of (polynomial) vector fields is deformed, with the new brackets extended by alternating compositions of differential operators of strict order $p\in\BBN_{\geqslant 1}$, to a strongly homotopy (SH) Lie algebra, see~\cite{Dzhuma2002} and \cite{ForKac,PRG25Wronsk} as well as~\cite{LadaStasheff1993,Stasheff2019Review} with more references therein.
The multi\/-\/linear antisymmetric brackets of even arities $N=2p$ are expressed by the Wronskian determinants (attaching to the Lie case at $p=1$ and $N=2$, see~\cite{cpWron}). 
Dzhumadil'daev showed in~\cite{Dzhuma2002} that the entire table of strongly homotopy Jacobi identities remains valid for arbitrary arities of the Wronskians as brackets over~$\BBR^1$.
In~\cite{ForKac}, the second author re\/-\/proved this fact for complete generalised Wronskians over multi\/-\/dimensional base, $d=\dim\BBR^d \geqslant 1$, then relaxing in~\cite{AVK25YerQTS13WrosnkBJP} the assumption that either Wronskian be complete in the Jacobi\/-\/type identities which are bi\/-\/linear w.r.t.\ the brackets.

Kirillov's question -- how fast does the algebra grow under the iteration of Wronskian brackets\,?\,-- now applies to the $N$\nobreakdash-ary structures of differential order $k\geqslant 1$ over the base dimension $d\geqslant 1$. (We have that $N=\binom{d+k}{d}$ for the complete generalised Wronskians $W_{d\geqslant 1}^{k\geqslant 1}$.)

To install the reference point along this line of research, 
in the present paper we classify the polynomial SH-Lie algebras ${\EuScript A}\supseteq \Bbbk_k[x^1,\ldots,x^d]$ which \emph{do not grow} at all but remain finite\/-\/dimensional. Surprisingly, the internal structure of these algebras over arbitrary dimension $d\geqslant 1$ patterns upon the $N$\nobreakdash-ary generalisations 
$\bigl( \Bbbk_N[x]$, $W_{d=1}^{0,1,...,N-1}=\mathbf{1}\wedge \partial_x\wedge\ldots\wedge\partial_x^{N-1} \bigr) $, here $N=k+1$, of the Lie algebra $\mathfrak{sl}(2)$, itself realised on the line~$\BBR^1$ by using quadratic vector fields.
Let us begin by recalling that classical construction. 

\begin{ex}[realisation of $\mathfrak{sl}(2)$ by polynomial vector fields]\label{ex:realis-sl2} Let $e:=1$, $h:=-2x$, and $f:=-x^2$ be monomials on the real line $\BBR\ni x$. Take the Wronskian determinant $W^{0,1}=\Bone\wedge\partial/\partial{x}$ of two functions on $\BBR$ as the bi-linear antisymmetric bracket. Then we readily see that
\begin{align}
    W^{0,1}(h,e)&=2e,& W^{0,1}(h,f)&=-2f,& &\text{and}& W^{0,1}(e,f)&=h,
\end{align}
which are the standard relations for $\mathfrak{sl}(2)$ w.r.t\ the Chevalley--Cartan basis; a verification that the Wronskian determinant on $C^\infty(\BBR)$ satisfies the Jacobi identity is straightforward.

We thus realise $\mathfrak{sl}(2)$ by using three vector fields with polynomial coefficients over $\Bbbk=\BBC$ (or $\Bbbk=\BBR$).
\end{ex}

By increasing the differential order of the Wronskian $W^{0,1,...,k}=\Bone\wedge \partial_x\wedge\cdots\wedge \partial_x^k$ to arbitrary $k\geqslant 1$, and now letting $N=k+1$, we endow the space of polynomials $\Bbbk[x]$ in one variable $x$ with an $N$-linear (over the number field $\Bbbk$) antisymmetric bracket:
\begin{equation}
    W^{0,1,...,N-1}(p_1,...,p_N)=\det \Bigl( {\partial^{i-1} p_j}/{\partial x^{i-1}} \Bigr)_{ij}(x)
\end{equation}
for $p_1,...,p_N\in \Bbbk[x]$ and $i,j\in \{1,...,N\}$. From \cite{Dzhuma2002} we recall that the $N$\nobreakdash-ary Wronskian brackets satisfy the $N$\nobreakdash-ary Jacobi identities for strong homotopy SH-Lie algebras (see \cite{LadaStasheff1993,Stasheff2019Review}, and \cite{PRG25Wronsk} for definitions and details). In particular, we know a class of finite-dimensional $N$\nobreakdash-ary SH-Lie algebras generalising $\mathfrak{sl}(2)$ with its binary structure from Example~\ref{ex:realis-sl2}.

\begin{ex}[from \cite{ForKac}]\label{ex:one-dim-finiteddim}
    Over dimension $d=1$, let $x$ be the coordinate on $\Bbbk=\BBR$. The Wronskian $W^{0,1,...,N-1}=\Bone\wedge\partial_x\wedges\partial_x^k$ of differential order $k=N-1$ acting on the subspace of polynomials of degree at most $N$, so $\EuA=\Bbbk_N[x]\subsetneq \Bbbk[x]$, makes $\EuA$ into an $(N+1)$-dimensional SH-Lie algebra: for any degree $\ell\in \{0,1,...,N\}$ we have
    \begin{equation}\label{ex:removed-factorial}
        W_d^{0,1,...,N-1}\left(1,...,\widehat{\frac{x^\ell}{\ell!}},...,\frac{x^N}{N!}\right) = \frac{x^{N-\ell}}{(N-\ell)!},
    \end{equation}
    which makes all the non-automatically-zero structure constants equal to $\pm1$ in the basis $\{x^m/m!: m\in\{0,1,...,N\}\}$.
\end{ex}

Now, we increase the dimension to $d=2$ and we let $x,y$ be the Cartesian coordinates on $\BBR^2$. Consider the generalised Wronskian $W_{d=2}^{k=1}=\Bone\wedge\partial_x\wedge\partial_y$ of differential order $k=1$: for any smooth functions $f,g,h\in C^\infty(\BBR^2)$, put
\begin{equation}\label{eq:wronskian-k2-d1}
    W^{k=1}_{d=2}(f,g,h)(x,y)=\det \begin{pmatrix}
        f&g&h\\ f_x&g_x&h_x\\ f_y&g_y&h_y
    \end{pmatrix}(x,y).
\end{equation}
As seen from \cite{ForKac} (also \cite{AVK25YerQTS13WrosnkBJP}), this ternary skew bracket equips the space of smooth functions $C^\infty (\BBR^2)$ with the SH-Lie algebra structure. 

Until now we knew three examples of finite-dimensional polynomial SH-Lie algebras with the ternary bracket \eqref{eq:wronskian-k2-d1}; here they are:

\begin{ex}\label{ex:dimension-two-closed-old}
    Over dimension $d=2$, take the generalised Wronskian $W_{d=2}^{k=1}=\Bone\wedge\partial_x\wedge\partial_y$ of differential order $k=1$ and the ternary bracket $[\cdot,\cdot,\cdot]$ on the space of smooth functions. Now in $C^\infty(\BBR^2)\supsetneq \BBR[x,y]$, consider three distinct four-dimensional subspaces of degree-two polynomials spanned by $1,x,y$ and a specific monomial of degree two:\\
    $\bullet$\quad$\EuA(x^2)=\Span_\BBR \langle 1,x,y,x^2\rangle$; the ternary commutation relations are
    \begin{align*}
        [1,x,y]&=1,& [1,x,x^2]&=0,& [1,y,x^2]&=-2x,& [x,y,x^2]&=-x^2;
    \intertext{$\bullet$\quad$\EuA(xy)=\Span_\BBR \langle 1,x,y,xy\rangle$; the ternary brackets are (cf.\ \cite{ForKac})}
        [1,x,y]&=1,& [1,x,xy]&=x,& [1,y,xy]&=-y,& [x,y,xy]&=-xy;
    \intertext{$\bullet$\quad$\EuA(y^2)=\Span_\BBR \langle 1,x,y,y^2\rangle$; we have that}
        [1,x,y]&=1,& [1,x,y^2]&=2y,& [1,y,y^2]&=0,& [x,y,y^2]&=-y^2.
    \end{align*}
    All three polynomial SH-Lie algebras are closed under the ternary bracket and thus four-dimensional.
\end{ex}

\begin{rem}\label{rem:imperfection-old-examples}
    The SH-Lie algebras $\Bbbk_N[x]$ and $\EuA(xy)$ from Examples~\ref{ex:one-dim-finiteddim} and~\ref{ex:dimension-two-closed-old}, respectively, are \emph{perfect}, that is, the $N$\nobreakdash-ary bracket has surjective span:\footnote{In literature the notation $[\gothg,\gothg]$ is already understood to be the span, i.e.\ $[\gothg,\gothg]:=\Span_{\Bbbk}[\gothg,\gothg]$. An alternative notation for the span is $\langle[\gothg,\gothg]\rangle$.} $\Span_\BBR [\EuA,\EuA,\EuA]=\EuA$. In contrast, the other algebras in Example~\ref{ex:dimension-two-closed-old} are not perfect (as the ternary bracket is missing a monomial from its image).

    In Proposition~\ref{prop:imperfection} below it will be proven that the two algebras -- over dimension $d=1$, $\Bbbk_N[x]$ with $k=N-1$, and $\EuA(xy)$ over $d=2$ -- are the only examples of perfect finite-dimensional polynomial SH-Lie algebras with complete generalised Wronskians as the $N$\nobreakdash-ary brackets, \emph{under the extra assumption} that the underlying vector space is spanned by \emph{pure monomials}.
\end{rem}

Finally, we have the fourth class of finite-dimensional examples.

\begin{ex}\label{ex:trivial-finite-dim-algebra}
    Fix the dimension $d\geqslant 1$ and differential order $k\geqslant 1$, and take the span,
    \begin{equation}
        \Span_\Bbbk\{1,x^1,...,x^d,(x^1)^2,x^1x^2,...,(x^d)^k\},
    \end{equation}
    of all monomials up to -- including but not exceeding -- degree $k$. We recall that there are $N={d+k\choose k}$ such monomials; this obviously is the number of arguments in the complete generalised Wronskian $W_{d\geqslant 1}^{k\geqslant 1}$, as the derivations in it stand in a bijective correspondence with the monomials in our set (cf.\ Definition~\ref{def:divpowermonomial-standardmonomial} in Preliminaries).

    Up to a reshuffle of all pairwise distinct arguments, the only non-zero bracket then is
    \begin{equation}
        W_{d\geqslant 1}^{k\geqslant 1}\Bigl(1,x^1,...,x^d,\frac{(x^1)^2}{2!},\frac{x^1x^2}{1!1!},...,\frac{(x^d)^k}{k!}\Bigr) = +1.
    \end{equation}
    This $N$-dimensional polynomial SH-Lie algebra is trivially closed, and it is not perfect for all differential orders $k\geqslant 1$ and dimensions $d\geqslant 1$.
\end{ex}

In Theorem~\ref{thm:general-vandermonde} we shall prove the following Lemma as a by-product.

\begin{lem}
    Over any base dimension $d\geqslant 1$ and any differential order $k\geqslant 1$, the complete generalised Wronskian determinant of monomials from $\Bbbk[\bx]$ is again a monomial.

    \noindent$\bullet$\quad By multilinearity then, the Wronskian determinant of polynomials is again a polynomial in $\Bbbk[\bx]$.
\end{lem}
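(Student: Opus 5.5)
Write the $N$ monomial arguments as $\bx^{\ba_1},\ldots,\bx^{\ba_N}$ with multi\/-\/indices $\ba_j\in\BBNo^d$, and index the rows of the complete generalised Wronskian $W_{d}^{k}$ by the multi\/-\/indices $\bi\in\BBNo^d$ with $|\bi|\leqslant k$, so that the $(\bi,j)$ entry is $\partial^{\bi}\bx^{\ba_j}$. The plan is to show the determinant is homogeneous under every torus scaling $x^m\mapsto \lambda_m x^m$ and hence, being a polynomial, is a scalar multiple of one monomial.

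First, $\partial^{\bi}\bx^{\ba_j} = (\ba_j)^{\underline{\bi}}\,\bx^{\ba_j-\bi}$, where $(\ba)^{\underline{\bi}}=\prod_m a^m(a^m-1)\cdots(a^m-i^m+1)$ is the falling factorial. This coefficient vanishes whenever $i^m>a^m$ for some $m$, so negative exponents never actually occur. Hence every entry is a constant times a monomial (possibly zero), and the determinant is a polynomial.

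Next, expand by Leibniz. A nonzero term for a bijection $\sigma$ from rows to columns is a product over rows $\bi$ of $\bx^{\ba_{\sigma(\bi)}-\bi}$. Its exponent is therefore
\[
\sum_j \ba_j-\sum_{|\bi|\leqslant k}\bi ,
\]
which is independent of $\sigma$. So every surviving term is a constant multiple of the same monomial $\bx^{\bs}$ with $\bs=\sum_j\ba_j-\sum_{|\bi|\leqslant k}\bi$. Thus the determinant equals $c\cdot\bx^{\bs}$, where
\[
c=\det\bigl((\ba_j)^{\underline{\bi}}\bigr)_{\bi,j},
\]
which is a generalised Vandermonde\/-\/type determinant. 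If $c=0$, the result is the zero polynomial, which we count as trivially a monomial with zero coefficient. If $c\neq0$, then automatically $\bs\in\BBNo^d$ because the product is a polynomial. In fact, equivalently, one can factor $\bx^{\ba_j}$ out of column $j$ and $\bx^{-\bi}$ out of row $\bi$ in the ring of Laurent polynomials. This leaves the constant matrix $\bigl((\ba_j)^{\underline{\bi}}\bigr)$, which gives the same conclusion in one line.

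The second bullet then follows by multilinearity of the determinant in its columns: expand each polynomial argument into monomials, and the Wronskian becomes a finite linear combination of monomials, hence a polynomial in $\Bbbk[\bx]$.

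The only delicate point, which I expect to be the main (minor) obstacle, is the bookkeeping with falling factorials when some $i^m>a^m$. One must check that the Laurent factorisation remains legitimate, which it does because the zero coefficient kills that entry. One should also note that the genuinely hard question is whether $c\neq0$ and its explicit value. That question is deferred to the factorisation formula of Theorem~\ref{thm:general-vandermonde} and is not needed for the statement of the Lemma.
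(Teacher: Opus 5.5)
Your argument is correct and is essentially the paper's: the paper obtains this Lemma as a by-product of the proof of Theorem~\ref{thm:general-vandermonde}. There $\bx^{\bkvec_n-k\Bone}$ is factored out of each column and $\bx^{k\Bone-\brvec_j}$ out of each row, leaving exactly your constant falling-factorial matrix (the quasi-triangular $\Delta_d^k$), with your exponent evaluated via Lemma~\ref{lem:combinatorial} as $\bs=\bkvec-\frac{kN}{d+1}\Bone$ and with $\det\Delta_d^k=\det\Van_d^k$, neither of which the Lemma itself needs.
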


Acting as the $N$\nobreakdash-ary bracket, the complete generalised Wronskian determinant $W_{d\geqslant1}^{k\geqslant 1}=\Bone\wedge \partial_\bx\wedges \partial^k_{\bx...\bx}$ endows the space $\Bbbk[\bx]$ of polynomials over $\Bbbk^d$ with a structure of strong homotopy Lie algebra (see \cite{ForKac,PRG25Wronsk} and references therein, also \cite{LadaStasheff1993}).

\begin{define}\label{def:dimension-of-polynomial-subalgebra}
    Let $\EuA\subseteq \Bbbk[\bx]=\Bbbk[x^1,...,x^d]$ be a subalgebra of the $N$\nobreakdash-ary SH-Lie algebra of polynomials $\Bbbk[\bx]$ with the Wronskian determinant of differential order $k$ over dimension $d$ as $N$\nobreakdash-ary bracket, $N={d+k\choose k}$; i.e.\ a vector subspace closed under the Wronskian $N$\nobreakdash-ary bracket $[\cdot,...,\cdot]=W_d^k(\cdot,...,\cdot)$ on $\Bbbk[\bx]$. The \emph{dimension} of $\EuA$ is the dimension of the subspace $\EuA$ viewed as a vector space over $\Bbbk$. In particular, $\dim_\Bbbk(\Bbbk[\bx])=+\infty$.
\end{define}

\begin{define}\label{def:perfect-lie-algebra}
    A Lie algebra $\mathfrak{g}=(V,[\cdot,\cdot])$ over $\Bbbk$ is called \emph{perfect} if $\Span_\Bbbk[\gothg,\gothg]=\gothg$.

    Now, in our case of $N$\nobreakdash-ary structures, specifically, $N={d+k\choose k}$, we say that the polynomial $N$\nobreakdash-ary SH-Lie algebra $\EuA$ with the Wronskian $W_{d\geqslant 1}^{k\geqslant 1}$ as the bracket is \emph{perfect} if the bracket spans the whole algebra: $\Span_\Bbbk W_d^k (\EuA,...,\EuA)=\EuA$. 
\end{define}

Any simple or semi-simple Lie algebra is immediately perfect. Finite-dimensional (semi-) simple complex Lie algebras are of great interest due to their classification by Dynkin diagrams and construction of their commutator tables from root systems. This leads to our main research problem.

\begin{resprob}\label{eq:research-problem}
    For every dimension $d\geqslant 1$ and every differential order $k\geqslant 1$ do there exist and, if so, what are all (perfect) \emph{finite-dimensional} polynomial $N$\nobreakdash-ary SH-Lie, $N={d+k\choose k}$, subalgebras $\EuA\subseteq \Bbbk[x^1,...,x^d]$ with the (complete) generalised Wronskian determinant $W_{d}^k = \Bone\wedge \partial_\bx\wedges \partial^k_{\bx...\bx}$ as the $N$\nobreakdash-ary Lie bracket?
\end{resprob}


{\small
\noindent
\parbox{77mm}{
    \textbf{\normalsize List of notation.} $\bullet$~ $\Bbbk=\BBR$ or $\BBC$, base field;\\
    $\bullet$~ $d=\dim_\BBR \BBR^d = \dim_\Bbbk \Bbbk^d$;\\
    $\bullet$~ $\bx=(x^1,...,x^d)$ Cartesian coordinates on $\BBR^d$;\\
    $\bullet$~ $\Bbbk[x^1,...,x^d]$, space of polynomials over $\BBR^d$;\\
    $\bullet$~ $\Bbbk_k[x^1,...,x^d]$ polynomials of degree $\leqslant k$;\\
    $\bullet$~ $N=\tbinom{d+k}{d}=\tbinom{d+k}{k}$, arity -- Definition~\ref{def:wronskian};\\
    $\bullet$~ perfect SH-Lie algebra $\EuA=\Span_\Bbbk [\EuA,...,\EuA]$;\\ 
    $\bullet$~ power $d$-tuple $\brvec=(r^1,...,r^d)\in\BBQ^d$;\\
    $\bullet$~ $\bx^\brvec=(x^1)^{r^1}\cdots (x^d)^{r^d}$ arbitrary monomial;\\
    $\bullet$~ $\partial^{|\brvec|}/\partial \bx^{\brvec} = \partial^{r^1}/\partial (x^1)^{r^1}\circ \cdots \circ \partial^{r^d}/\partial (x^d)^{r^d}$ differential operator -- cf.\ Eq.~\eqref{eq:differential-operator-acting-on-monomial};\\
}\hspace{3mm}\parbox{77mm}{
    $\bullet$~ $\bx^\brvec \leftrightarrows \partial^{|\brvec|}/\partial \bx^{\brvec_j}$ -- differential operators vs divided power monomials -- Definition~\ref{def:divpowermonomial-standardmonomial};\\
    $\bullet$~ operator in the $j$th row of the Wronskian: $\partial^{|\brvec_j|}/\partial \bx^{\brvec_j}$ -- Definition~\ref{def:divpowermonomial-standardmonomial};\\
    $\bullet$~ standard monomial $\bx^{\brvec_j}$ -- Definition~\ref{def:divpowermonomial-standardmonomial};\\
    $\bullet$~ $\det \Van_{d=1}^{0,1,...,N-1}$ ordinary Vandermonde det.;\\
    $\bullet$~ $\det \Van_{d\geqslant 1}^{k\geqslant 1}$ generalised 
    -- Definition~\ref{def:gen-vandermonde};\\
    $\bullet$~ $\det \Delta_k^d$, quasi-triangular form 
    -- Definition~\ref{def:quasi-triangular-form};\\
    $\bullet$~ $\bm^{\brvec}=(m_1)^{r^1}\cdots (m_d)^{r^d}$;\\
    $\bullet$~ $\bkvec=\bk\in \BBQ^d$ power $d$-tuple;\\
    $\bullet$~ $kN/(d+1)$ degree shift -- Lemma~\ref{lem:combinatorial};\\
}

\noindent\parbox[t]{77mm}{
    $\bullet$~ deficiency condition -- Proposition~\ref{prop:sufficient-vandermonde-zero}\,\ref{prop8-iii:prop-sufficient-vandermonde:deficiency};\\
    $\bullet$~ $s=k/(d+1)\cdot N/(N-1)$ -- Remark~\ref{rem:multivar-gen-of-witt-algebra};\\
}\hspace{3mm}\parbox[t]{77mm}{
    $\bullet$~ consistent algebra, $\Bbbk_k[\bx]\subseteq \EuA\subseteq \Bbbk[\bx]$, 
    trivial, lonely, chubby, tall, lanky algebras -- Definition~\ref{def:consistent-lonely-chubby-tall-lanky}.
    }
}

\centerline{\rule{3.5in}{0.7pt}}

\subsection*{Preliminaries}\phantom{.}

\medskip
\noindent%
Let us recall some notions and fix notation.

\begin{notation}\label{not:multi-index}
    Let $\Bbbk=\BBR$ (or $\BBC$) be the ground field; let $\bx=(x^1,...,x^d)$ be the (Cartesian) coordinates on $\BBR^d$, where $d$ is the base dimension. By convention, put $\partial_{x^i}=\partial/\partial x^i$ and $\partial_\bx=\partial_{x^1}\wedges \partial_{x^d}$; also, put $\partial^2_{\bx\bx}=\partial^2_{x^1x^1}\wedges\partial^2_{x^ix^j}\wedges\partial^2_{x^dx^d}$ and so on until $\partial^k_{\bx...\bx}=\partial^k_{x^1....x^1}\wedges \partial^k_{x^d...x^d}$. Denote the identity operator by $\Bone$.

    \noindent$\bullet$\quad Let $\vec\br=(r^1,...,r^d)$ be a \emph{$d$-tuple} (also called multi-index) in $\BBN_{\geqslant 0}^d$ (occasionally in $\BBZ^d$ or $\BBQ^d$). We denote by $\bx^\brvec$ the monomial $\bx^\brvec = (x^1)^{r^1}\cdots (x^d)^{r^d}$; and denote by $\partial^{|\brvec|}/\partial \bx^\brvec$ the differential operator
    \begin{equation}\label{eq:def-of-differential-operator}
        \frac{\partial^{|\brvec|}}{\partial \bx^\brvec} := \frac{\partial^{r^1}}{\partial (x^1)^{r^1}} \cdots \frac{\partial^{r^d}}{\partial (x^d)^{r^d}}.
    \end{equation}
    For instance, the $2$-tuple $\brvec=(2,1)$ gives $\partial^3/\partial x^2\partial y$, where $x:=x^1$ and $y:=x^2$. By definition, put $\brvec!=r^1!\cdots r^d!$. Wherever convenient, we shall lower the indices of the power $d$-tuple $\brvec$ without change of meaning; for instance, setting $\brvec=\bnvec=(n_1,...,n_d)$, the monomial $\bx^\brvec$ becomes $(x^1)^{n_1}\cdots (x^d)^{n_d}$.

    \noindent$\bullet$\quad Arbitrary monomials will usually be denoted by $a(\bx)$, whereas polynomials by $p(\bx)$ or $q(\bx)$; however, exceptions are possible (in \S\ref{subSecNonExistence}; cf.\ Footnote~\ref{foot:consideration-of-pure-monomials} for the reasoning).
\end{notation}

\begin{define}\label{def:wronskian}
    Consider the algebra of smooth functions functions over coordinates $\bx=(x^1,...,x^d)$; we say that $d\geqslant 1$ is the \emph{dimension}. Let the \emph{total differential order} be $k\geqslant 1$, and set $N={d+k\choose k}={d+k\choose d}$. The $N$-linear (over $\Bbbk$) totally antisymmetric, $C^\infty (\BBR^d)$-valued differential operator $W_d^k=\Bone\wedge \partial_\bx \wedges \partial^k_{\bx...\bx}$ is the \emph{complete generalised Wronskian determinant} of differential order $k$ over dimension $d$.
\end{define}

\begin{notation}\label{not:jth-row-wronskian}
    Denote by $\partial^{|\brvec_j|}/\partial \bx^{\brvec_j}$ the differential operator in the $j$th row, $j\in \{1,...,N\}$, of the Wronskian $W_d^k$. In this notation, $W_d^k=\bigwedge_{j=1}^N \partial^{|\brvec_j|}/\partial \bx^{\brvec_j}$. We say that $r_j^i$ is the \emph{differential order} in coordinate $x^i$ and $r_j=|\brvec_j|=r_j^1+\cdots+r_j^d$ is the \emph{total differential order} of the differential operator $\partial^{|\brvec_j|}/\partial \bx^{\brvec_j}$.
\end{notation}

\begin{ex}\label{ex:lowest-dim-order-wronskians}
    Take smooth functions $f,g,h\in C^\infty(\BBR^1)$ and put $W_{d=1}^{k=1}=\Bone\wedge \partial/\partial x$ and $W_{d=1}^{k=2}=\Bone\wedge \partial/\partial x\wedge \partial^2/\partial x^2$. Then the Wronskians,
    \begin{align}\label{eq:lowest-dim-order-wronskians}
        W_{d=1}^{k=1}(f,g)(x) &= \det \begin{vmatrix}
            f&g\\ f'&g'
        \end{vmatrix}(x), &
        W_{d=1}^{k=2}(f,g,h)(x) &= \det \begin{vmatrix}
            f&g&h \\ f'&g'&h'\\ f''&g''&h''
        \end{vmatrix}(x),
    \end{align}
    produce smooth functions on the real line $\BBR^1$.

    Alternatively, let the dimension be $d=2$ and the differential order be $k=1$. Then for smooth functions $f,g,h\in C^{\infty}(\BBR^2)$ we have 
    \begin{equation}\label{eq:wronskian-d2-k1}
        W_{d=2}^{k=1}(f,g,h)(\bx)=\det \begin{vmatrix}
            f&g&h\\ f_x&g_x&h_x\\ f_y&g_y&h_y
        \end{vmatrix}(\bx),
    \end{equation}
    where $f_x:=\partial f/\partial x$ for $x=x^1$ and $y=x^2$ and which is again a smooth function over $\BBR^2$.

    In Notation~\ref{not:multi-index} the above Wronskians become
    \begin{align}
        W_{d=1}^{k=1}&=\Bone\wedge \partial_x = \frac{\partial^{|\brvec_1|}}{\partial \bx^{\brvec_1}} \wedge \frac{\partial^{|\brvec_2|}}{\partial \bx^{\brvec_2}}, &
        W_{d=2}^{k=1} &= \frac{\partial^{|\brvec_1|}}{\partial \bx^{\brvec_1}} \wedge \frac{\partial^{|\brvec_2|}}{\partial \bx^{\brvec_2}} \wedge \frac{\partial^{|\brvec_3|}}{\partial \bx^{\brvec_3}},
    \end{align}
    where we observe that in all cases $\partial^{|\brvec_1|}\partial \bx^{\brvec_1}$ is the identity $\Bone$; by definition, then $r_1=|\brvec_1|=0$ and $r_1^i=0$ for all $i\in\{1,...,d\}$. The second operator is always of order 1, namely, $r_2=|\brvec_2|=1$ with $r_2^i=1$ for a single $i\in \{1,...,d\}$ and all others vanishing: $r_2^{i'}=0$ for $i'\neq i$.
\end{ex}

Let us recall a natural correspondence between monic single-term differential operators, like $\partial^3/\partial x^2\partial y$, and monomials (resp., $x^2y$) over $\BBR^d$, and functions $ (m^1,...,m^d)\mapsto (m^1)^2\cdot m^2$ from $\BBQ^d$.

\begin{define}[divided power monomial and standard monomial]\label{def:divpowermonomial-standardmonomial}
    In a \emph{fixed} system of coordinates $\bx=(x^1,...,x^d)$ on $\BBR^d$, let $D=\partial^{|\brvec|}/\partial \bx^\brvec$ be a single-term monic differential operator of total differential order $r=|\brvec|$. The \emph{divided power monomial} $a(\bx)$ corresponding to $D$ is, by definition, the (unique) monomial in $\Bbbk[\bx]$ satisfying $D(a)(\bx)\equiv 1$, the unit constant polynomial.

    \noindent$\bullet$\quad The formula $a(\bx)=\bx^\brvec/\brvec! = (x^1)^{r^1}\cdots (x^d)^{r^d}/r^1!\cdots r^d!$ obviously satisfies $D(a)(\bx)\equiv 1$ for $D=\partial^{|\brvec|}/\partial \bx^\brvec$ as above with any power $d$-tuple (multi index) $\brvec$ in $\BBN_{\geqslant 0}^d$. In particular, for $D=\partial^{|\brvec_j|}/\partial\bx^{\brvec_j}$, the differential operator of the $j$th row of the Wronskian $W_d^k=\Bone \wedge \partial_\bx\wedges \partial^k_{\bx...\bx}$ (of differential order $k\geqslant 1$ over dimension $d\geqslant 1$), the divided power monomial corresponding to $\partial^{|\brvec_j|}/\partial \bx^{\brvec_j}$ is $\bx^{\brvec_j}/\brvec_j!$.

    The \emph{standard set} of monomials w.r.t.\ the Wronskian $W_d^k$ is $\{\bx^{\brvec_1}/\brvec^1,...,\bx^{\brvec_N}/\brvec_N!\}$, and the divided power monomial $\bx^{\brvec_j}/\brvec_j!$ is called the \emph{standard monomial corresponding to the $j$th row of the Wronskian}.

    \noindent$\bullet$\quad The correspondence also sends a \emph{monomial} to its evaluation map on $\bm\in\BBQ^d$:
    \begin{equation}
    \begin{split}
        \Bbbk[\bx] &\to (\BBQ^d\to \BBQ)\\
        \bx^{\brvec} &\mapsto (\bm\mapsto \bm^{\brvec}).
    \end{split}
    \end{equation}
    Basically, $x^2y^3$ becomes the map $\BBQ^{d=2}\to \BBQ$ given by $(m_1,m_2)\mapsto (m_1)^2 (m_2)^3$.
\end{define}

Let $a(\bx)=\bx^\bnvec=(x^1)^{n_1}\cdots (x^d)^{n_d}$ be a monomial with power $d$-tuple $\bnvec\in \BBN_{\geqslant 0}^d$. The partial derivative of $a(\bx)$ with respect to $x^1$ is then $\partial a/\partial x^i= n_i\cdot a/x^i$; this extends to higher-degree differential operators $\partial^{|\brvec|}/\partial \bx^\brvec$ by
\begin{equation}\label{eq:differential-operator-acting-on-monomial}
    \frac{\partial^{|\brvec|}a}{\partial \bx^{\brvec}}(\bx) = \frac{a}{\bx^\brvec} \cdot \prod_{i=1}^d \left\{ \prod_{\ell =0}^{r^i-1} (n_i-\ell) \right\}.
\end{equation}
If, for some $i\in \{1,...,d\}$ the differential order $r^i$ exceeds the degree of $x^i$ in $a(\bx)$, i.e.\ $r^i>n_i$, then the product $\prod_{\ell=0}^{r^i-1}(n_i-\ell)$ vanishes, and therefore the `negative' degree of $a/(x^i)^{r_i}$ in $x^i$ is fictitious as the entire r.\!-h.s.\ of \eqref{eq:differential-operator-acting-on-monomial} is identically zero.

\section{Multivariate Vandermonde determinants are coefficients of multivariate Wronskians}\label{SecVander}

\noindent In this section we generalise the Vandermonde determinant to an arbitrary base dimension $d\geqslant 1$ in a way that respects the structure of the complete generalised Wronskian determinant $W_d^k=\Bone \wedge \partial_\bx\wedges \partial^k_{\bx...\bx}$ of differential order $k\geqslant 1$ over dimension $d\geqslant 1$. The complete generalised Vandermonde determinant of degree $k$ over dimension $d$ appears (see Theorem~\ref{thm:general-vandermonde} on p.~\pageref{thm:general-vandermonde}) as the coefficient of the monomial resulting from a Wronskian determinant of monomial arguments; the arguments of this Vandermonde determinant are the power $d$-tuples of the monomial arguments in the Wronskian. We use the correspondence between monic single-term differential operators, monomials, and evaluation functions on $\BBQ^d$: for example,
\begin{equation}
    \partial^5/\partial x^2\partial y^3 \leftrightarrows x^2 y^3\leftrightarrows \{(m_1,...,m_d)\mapsto (m_1)^2 (m_2)^3\},
\end{equation}
where $\bm=(m^1,...,m^d)$ is an argument of the Vandermonde determinant; see Definition~\ref{def:divpowermonomial-standardmonomial}.

The main theorem in this section is a generalisation of Theorem 13 from \cite{ForKac} (see Theorem~\ref{thm:general-vandermonde}) specifically from over dimension 1 to arbitrary dimension $d\geqslant 1$; that is, the (complete) generalised Wronskian determinant (of differential order $k\geqslant 1$ over dimension $d\geqslant 1$) of monomial arguments is again a monomial with the coefficient given by the complete generalised Vandermonde determinant of the power $d$-tuples of the monomial arguments. As part of the theorem, we give an alternative form of the generalised Vandermonde determinant, which we call quasi-triangular.

Finally, we shall give three sufficient conditions of the complete generalised Vandermonde determinant (hence also the Wronskian) to vanish (see Proposition~\ref{prop:sufficient-vandermonde-zero} on p.~\pageref{prop:sufficient-vandermonde-zero}); of these, one is non-trivial.

\medskip
Recall the definition of the (ordinary) Vandermonde determinant (over dimension $d=1$) and an old result \cite{ForKac} about the Wronskians of monomials.

\begin{theordef}[Vandermonde determinant]\label{thmdef:ordinary-vandermonde} 
    Take numbers $m_1,...,m_N$ from $\BBN_{\geqslant 0}$ (or $\BBZ$, or $\BBQ$); their Vandermonde determinant is
\begin{equation}\label{eq:vandermonde-ordinary}
    \det \Van_{d=1}^{0,1,...,N-1}(m_1,...,m_N)=\det \begin{vmatrix}
        1&\cdots&1\\
        m_1&\cdots&m_N\\
        \vdots&&\vdots\\
        m_1^{N-1}&\cdots&m_N^{N-1}
    \end{vmatrix} =
    \prod_{1\leqslant i<j\leqslant N} (m_j-m_i),
\end{equation}
    i.e.\ the determinant fully factorises to a product of linear factors in $m_1,...,m_N$.
\end{theordef}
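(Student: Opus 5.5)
The plan is to prove the classical factorisation by the polynomial-in-one-variable argument, which is the standard route for Vandermonde determinants.

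First I would regard $\det \Van_{d=1}^{0,1,...,N-1}(m_1,...,m_N)$ as a polynomial in the indeterminates $m_1,\ldots,m_N$ with integer coefficients. Expanding by the Leibniz formula, each term is $\pm m_{\sigma(1)}^{0}m_{\sigma(2)}^{1}\cdots m_{\sigma(N)}^{N-1}$. So the determinant is homogeneous of total degree $0+1+\cdots+(N-1)=\binom{N}{2}$, and it has degree at most $N-1$ in each single variable $m_j$.

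Next, fix $i<j$. Setting $m_j=m_i$ makes columns $i$ and $j$ coincide, so the determinant vanishes. Regarding the determinant as a polynomial in $m_j$ over $\BBZ[m_1,\ldots,\widehat{m_j},\ldots,m_N]$ and using the factor theorem, $(m_j-m_i)$ divides it. The linear forms $m_j-m_i$ for $i<j$ are pairwise non-associate irreducibles in the UFD $\BBZ[m_1,\ldots,m_N]$. Hence their product $\prod_{i<j}(m_j-m_i)$, which also has degree $\binom{N}{2}$, divides the determinant. Comparing degrees, the quotient is a constant $c\in\BBZ$.

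To find $c$, I would compare the coefficient of the diagonal monomial $m_1^0m_2^1\cdots m_N^{N-1}$. In the determinant this monomial comes from the identity permutation with sign $+1$, and no other permutation produces it. In the product, to obtain $m_N^{N-1}$ one must pick $m_N$ from all $N-1$ factors involving $m_N$. Inductively, to get $m_j^{\,j-1}$ one must pick $m_j$ from every factor $(m_j-m_i)$ with $i<j$. This gives coefficient $+1$, so $c=1$.

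Since the identity holds in $\BBZ[m_1,\ldots,m_N]$, it specialises to any $m_i\in\BBN_{\geqslant0},\BBZ,\BBQ$.

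The only point that needs care is the leading-coefficient bookkeeping, which fixes both the sign and the absence of an extra constant. An alternative I could fall back on is induction on $N$. Subtract $m_1$ times each row from the next, from the bottom up, so that the first column becomes $(1,0,\ldots,0)^{\mathsf T}$. Then factor $(m_j-m_1)$ out of column $j$, which leaves the Vandermonde matrix of $m_2,\ldots,m_N$.
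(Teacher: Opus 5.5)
Your proof is correct and complete. The key steps all hold:
\begin{itemize}
\item the factor theorem over $\BBZ[m_1,\ldots,\widehat{m_j},\ldots,m_N]$;
\item the fact that the forms $m_j-m_i$ are pairwise non-associate irreducibles in a UFD;
\item the degree count $\binom{N}{2}$;
\item the fact that the diagonal monomial $m_2m_3^2\cdots m_N^{N-1}$ arises only from the identity permutation in the Leibniz expansion and only from the choice of $m_j$ in every factor of the product.
\end{itemize}
Together these force $c=1$, and the identity then specialises to any numerical $m_i$.

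The paper gives no proof of this statement. It quotes the classical factorisation as a reference point, so there is nothing to match step by step.

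Your route is, however, exactly the template the paper later applies to its generalised Vandermonde determinants in the proofs of Theorem~\ref{thm:structure-loneliness} and Theorem~\ref{thm:golden-formula}. There the determinant is treated as a polynomial in the entries of one column, linear factors are read off from coinciding columns, further factors are excluded by the degree bound, and the remaining constant is fixed by a single normalisation.

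Your version differs in two ways:
\begin{itemize}
\item You make the divisibility step rigorous via unique factorisation, which the paper's later arguments leave implicit.
\item You fix the constant by comparing the coefficient of the leading diagonal monomial. The paper instead normalises by evaluating at the standard arguments, where the quasi-triangular matrix $\Delta_d^k$ is triangular.
\end{itemize}

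Your fallback induction, subtracting $m_1$ times each preceding row, is in the same spirit as the row-reduction device the paper uses for translation invariance (Proposition~\ref{prop:translation-invariance}) and for $\det\Van_d^k=\det\Delta_d^k$ in Theorem~\ref{thm:general-vandermonde}.
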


\begin{theor}[Theorem 13 from \cite{ForKac}]\label{thm:mathRA} 
    Let $m_1,...,m_N\in \BBN_{\geqslant 0}$ (or $\BBZ$, or $\BBQ$) be constants and set $m=m_1+\cdots +m_N$; then the Wronskian determinant $W_{d=1}^{0,1,...,N-1}:=W_{d=1}^{k=N-1}$ of (generalised) monomial arguments becomes the (generalised) monomial
    \begin{equation}\label{eq:mathRA-wronskian-of-monoms}
        W_{d=1}^{0,1,...,N-1}(x^{m_1},...,x^{m_N})=\prod_{1\leqslant i<j\leqslant N} (m_j-m_i) x^{m-N(N-1)/2},
    \end{equation}
    where the coefficient is the \underline{\emph{Vandermonde}} determinant of their powers.
\end{theor}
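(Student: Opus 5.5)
The plan is to prove the formula by factoring out powers of $x$ from the rows and columns of the Wronskian matrix, which reduces everything to a constant determinant; that constant determinant is then identified with the Vandermonde determinant from Theorem--Definition~\ref{thmdef:ordinary-vandermonde}.

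First we compute the matrix entries. The $(i,j)$ entry is $\partial_x^{i-1}x^{m_j} = (m_j)_{i-1}\,x^{m_j-(i-1)}$, where $(m)_{r}=m(m-1)\cdots(m-r+1)$ is the falling factorial. This agrees with~\eqref{eq:differential-operator-acting-on-monomial} for $d=1$, and it holds for rational or negative $m_j$ as well, with generalised monomials. We then factor $x^{m_j}$ out of column $j$ and $x^{-(i-1)}$ out of row $i$. By multilinearity of the determinant this gives
\[
W_{d=1}^{0,1,\ldots,N-1}(x^{m_1},\ldots,x^{m_N}) = x^{\,m-\sum_{i=1}^{N}(i-1)}\,\det\bigl((m_j)_{i-1}\bigr)_{ij} = x^{\,m-N(N-1)/2}\,\det\bigl((m_j)_{i-1}\bigr)_{ij}.
\]
This accounts for the exponent $m-N(N-1)/2$. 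The remaining task is to evaluate the constant determinant.

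For that we use the fact that $(t)_{r}$ is a monic polynomial of degree $r$ in $t$. Its leading term is $t^{r}$, and its other terms are a combination of $1,t,\ldots,t^{r-1}$. We perform row operations from the bottom up: for each $r$, subtract from row $r+1$ suitable multiples of rows $1,\ldots,r$. These operations do not change the determinant, and they replace the row $\bigl((m_j)_r\bigr)_j$ by $\bigl(m_j^{\,r}\bigr)_j$. Equivalently, the matrix $\bigl((m_j)_{i-1}\bigr)_{ij}$ is the product of a lower unitriangular matrix, made of Stirling numbers of the first kind, with the Vandermonde matrix. Hence $\det\bigl((m_j)_{i-1}\bigr)=\det\Van_{d=1}^{0,1,\ldots,N-1}(m_1,\ldots,m_N)=\prod_{i<j}(m_j-m_i)$.

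The only point needing care is the case where some $m_j<i-1$ are nonnegative integers, so that the true derivative vanishes. The falling factorial $(m_j)_{i-1}$ also vanishes there, so the identity still holds. The factoring step is valid as an identity of Laurent, or generalised, monomials, and no extra argument is required. I do not expect a real obstacle; the unitriangular change of basis from the falling factorials to the powers is the only substantive step.
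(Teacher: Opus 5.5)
Your proof is correct and is essentially the argument the paper gives for Theorem~\ref{thm:general-vandermonde}, which reduces to this statement at $d=1$: factor the powers of $x$ out of the columns and rows to obtain the exponent $m-N(N-1)/2$, then reduce the falling-factorial (quasi-triangular) matrix to the Vandermonde matrix by subtracting multiples of preceding rows. Your factorisation through a lower unitriangular matrix of Stirling numbers is a cleaner way of phrasing that last step.
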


\begin{rem}[generalisation of the Witt algebra, \cite{PRG25Wronsk}]\label{rem:oct25-witt}
    The (complex, infinite-dimensional) Witt algebra is given by differential operators with Laurent polynomial coefficients ordered by degree, $a_i=x^{i+1}$, and with the Lie bracket satisfying the commutation relations $[a_i,a_j]=(j-i)a_{i+j}$ for all $i,j\in \BBZ$. Its (strong) homotopy $L_\infty$ deformations by Wronskians increase the bracket arity; with the proper index shift $a_i=x^{i+N/2}$, by Theorem~\ref{thmdef:ordinary-vandermonde}, the $N$\nobreakdash-ary bracket $[\cdot,...,\cdot]_N$ given by the (ordinary) Wronskian determinant $W_{d=1}^{0,1,...,N-1}$, satisfies the commutation relations
    \begin{equation}\label{eq:commut-rels-Witt-deform-d=1}
        [a_{i_1},...,a_{i_N}]_N = \det \Van_{d=1}^{0,1,...,N-1}(i_1,...,i_N)\cdot a_{i_1+\cdots+i_N},
    \end{equation}
    where the structure constants, the Vandermonde determinants, are totally antisymmetric w.r.t\ their arguments.
\end{rem}

Before we give a natural multivariate generalisation of the Vandermonde determinants w.r.t\ the Wronskian determinant, let us have some motivating examples, which will illustrate all steps in the proof of Theorem~\ref{thm:general-vandermonde} below.

\begin{ex}[$k=1,d=2$]\label{ex:vander-k1d2}
    Let the dimension be $d=2$ and the differential order be $k=1$; set $N={d+k\choose k}=3$. 
    Consider $W_{k=1}^{d=2}=\mathbf{1}\wedge \partial/\partial x\wedge \partial/\partial y$ as the ternary ($N=3$) bracket $[\cdot,\cdot,\cdot]$. Let $\bx^{\bm_1}$, $\bx^{\bm_2}$, and $\bx^{\bm_3}$ be arbitrary monomials ($m_j^i\in \BBNo$ for all $i,j$) or generalised monomials ($m_j^i\in \BBZ$ or $\BBQ$). For the ease of notation let $\bmvec_1=(k_1,k_2)$, $\bmvec_2=(\ell_1,\ell_2)$, and $\bmvec_3=(n_1,n_2)$. Then their bracket becomes
    \begin{align*}
            [\bx^{\bm_1},\bx^{\bm_2},\bx^{\bm_3}] &= \det \begin{vmatrix}
                x^{k_1}y^{k_2}&x^{\ell_1}y^{\ell_2}&x^{n_1}y^{n_2} \\
                k_1 x^{k_1-1}y^{k_2}&\ell_1 x^{\ell_1-1}y^{\ell_2}&n_1 x^{n_1-1}y^{n_2} \\
                k_2 x^{k_1}y^{k_2-1}&\ell_2 x^{\ell_1}y^{\ell_2-1}&n_2 x^{n_1}y^{n_2-1}
            \end{vmatrix} \\
            &= x^{k_1-1}y^{k_2-1}\cdot x^{\ell_1-1}y^{\ell_2-1}\cdot x^{n_1-1}y^{n_2-1}\cdot \det \begin{vmatrix}
                xy& xy& xy\\
                k_1y & \ell_1 y & n_1 y \\
                k_2 x& \ell_2 x& n_2 x
            \end{vmatrix}, 
        \intertext{by gathering common powers per column,}
            &= x^{k_1+\ell_1+m_1-3}y^{k_2+\ell_2+n_2-3}\cdot xy\cdot y\cdot x\cdot \det \begin{vmatrix}
                1&1&1 \\ k_1&\ell_1&m_1 \\ k_2&\ell_2&n_2
            \end{vmatrix},
        \intertext{by gathering common monomials per row, leaving just the determinant of the number matrix}
            &= x^{k_1+\ell_1 + n_1 - 1}y^{k_2+\ell_2+n_2-1}\cdot \det \begin{vmatrix}
                1&1&1 \\ k_1&\ell_1&n_1 \\ k_2&\ell_2&n_2
            \end{vmatrix} \\
            &= \bx^{\bmvec_1+\bmvec_2+\bmvec_3 - \Bone}\cdot \det \begin{vmatrix}
                1&1&1 \\ k_1&\ell_1&n_1 \\ k_2&\ell_2&n_2
            \end{vmatrix},
    \end{align*}
    where $\Bone=(1,1)$ is the all-ones vector. We see that the result consists of a monomial in $x$ (resp. $y$) with their powers given by the sum of arguments' degrees in $x$ (resp. $y$) minus a shift, and the coefficient is a \emph{generalisation} of the Vandermonde determinant: instead of degrees' (of $x$) squares $(k_1)^2,(\ell_1)^2,(n_1)^2$ in the 3rd row, it contains the powers of $y$, namely, $k_2,\ell_2,n_2$.
\end{ex}

\begin{notation}
    In the above Example~\ref{ex:vander-k1d2}, we have the coordinates $x^1=x$ and $x^2=y$, of which we take powers to form monomials like $(x^1)^{m_j^1}\cdot (x^2)^{m_j^2}$. By convention (see Notation~\ref{not:multi-index} on p.~\pageref{not:multi-index}), $(x^1)^{m^1}\cdot (x^2)^{m^2}=\bx^{\bmvec}$ over the base dimension $d$ (here $d=2$) and $\bmvec\in\BBNo^d$ (or $\BBZ^d$, or $\BBQ^d$).

    For the (complete) generalised Wronskian $W_{d\geqslant 1}^{k\geqslant 1}$ of arity $N={d+k\choose k}$ we now take the monomials $(x^1)^{m_j^1}\cdots (x^d)^{m_j^d}=\bx^{\bmvec_j}$ with $j\in \{1,...,N\}$ as its arguments. For every $j$ and its respective power $d$-tuple $\bmvec_j=(m_j^1,...,m_j^d)$, we put by definition $\bm_j=\bmvec_j$.

    Next, consider the $n$th row in the Wronskian: in it, each argument is differentiated by the differential operator $\partial^{|\brvec_n|}/\partial \bx^{\brvec_n}$ (see Notation~\ref{not:jth-row-wronskian} on p.~\pageref{not:jth-row-wronskian}); by Definition~\ref{def:wronskian} on p.~\pageref{def:wronskian}, to this operator there corresponds a monomial $\bx^{\brvec_n}$, which we take as the (evaluation) function $\BBNo^d\to\BBNo$ (or with $\BBNo$ replaced by $\BBZ$ or $\BBQ$) given by the formula $(m^1,...,m^d)\mapsto \bm^{\brvec_j}=(m^1)^{r_j^1}\cdots (m^d)^{r_j^d}$. For instance, $\partial^3/\partial x^2\partial y\leftrightarrows x^2y\leftrightarrows \{(m^1,m^2)\mapsto (m^1)^2\, m^2\}$. In our notation, $\bm_j^{\brvec_n}$ means raising the powers of base variables $m_j^\alpha$ to the respective degrees $r_n^\alpha$ by which the arguments in the Wronskian are differentiated in the $n$th row. Here $\brvec_1=(0,...,0)$ gives $\bm_j^{\brvec_1}=1$, the unit.
\end{notation}

\begin{define}[generalised Vandermonde determinant]\label{def:gen-vandermonde}
    Let the dimension be $d\geqslant 1$ and the total degree be $k\geqslant 1$; set $N={d+k\choose k}$. Let $\partial^{|\brvec_n|}/\partial \bx^{\brvec_j}$ be the differential operator from the $n$th row in the Wronskian $W_d^k=\Bone\wedge\partial_\bx\wedges\partial^k_{\bx...\bx}$; here $\brvec_n$ is its corresponding power $d$-tuple (multi-index). The \emph{complete generalised Vandermonde determinant} of degree $k$ over dimension $d$ w.r.t\ the complete generalised Wronskian $W_d^k$ of power $d$-tuples $\bm_1,...,\bm_N\in \BBQ^d$ is the determinant
    \begin{equation}\label{eq:generalised-vandermonde-def}
        \det \Van_d^k(\bm_1,...,\bm_N)=\det \begin{pmatrix}
            \bm_1^{\brvec_1}&\cdots&\bm_N^{\brvec_1} \\
            \vdots &&\vdots \\
            \bm_1^{\brvec_N}&\cdots&\bm_N^{\brvec_N}
        \end{pmatrix},
    \end{equation}
    where in our notation $\bm_j^{\brvec_n}=(m_j^1)^{r_n^1}\cdots(m_j^d)^{r_n^d}$ and the first row consists of only units: $\bm_j^{\brvec_1}=1$. Let us remember that the formula of the complete generalised Vandermonde determinant is relative to the complete generalised Wronskian determinant.
\end{define}

\begin{ex}
    With the notation as above,
    \begin{equation}
        \det\Van_{d=1}^{k=N-1}(m_1,...,m_N)=\det\begin{pmatrix}
            1&\cdots&1\\m_1&\cdots&m_{N}\\(m_1)^2&\cdots&(m_N)^2\\ \vdots&&\vdots\\ (m_1)^{N-1}&\cdots &(m_N)^{N-1}
        \end{pmatrix},
    \end{equation}
    namely, in dimension $d=1$ we recover the ordinary Vandermonde determinant. Now for higher dimensions:
    \begin{equation}
        \det\Van_{d=2}^{k=1}(\ba,\bb,\bc)=\det\begin{pmatrix}
            1&1&1\\ a_1&b_1&c_1\\ a_2&b_2&c_2
        \end{pmatrix} = 
        (b_1c_2-b_2c_1) - (a_1c_2 - a_2c_2) + (a_1b_2 - a_2b_1)
    \end{equation}
    as before in Example~\ref{ex:vander-k1d2} with $\ba=\bm_1$, etc.; finally,
    \begin{equation}\label{eq:six-arguments-vandremonde}
        \det \Van_{d=2}^{k=2}(\ba,\bb,\bc,\bd,\be,\bbf) = \det \begin{pmatrix}
            1&1&1&1&1&1\\
            a_1&b_1&c_1&d_1&e_1&f_1\\
            a_2&b_2&c_2&d_2&e_2&f_2\\
            a_1^2&b_1^2&c_1^2&d_1^2&e_1^2&f_1^2\\
            a_1a_2&b_1b_2&c_1c_2&d_1d_2&e_1e_2&f_1f_2\\
            a_2^2&b_2^2&c_2^2&d_2^2&e_2^2&f_2^2,
        \end{pmatrix}
    \end{equation}
    which is the determinant of a $6\times 6$ matrix as $N={2+2\choose 2}=6$. As a side remark, we note that (see \cite[Example~2.8]{FBrownVandermonde2024}) the determinant in~\eqref{eq:six-arguments-vandremonde} vanishes if and only if the six points in the Euclidean plane $\ba=(a_1,a_2),\bb=(b_1,b_2),...,\bbf=(f_1,f_2)\in \BBR^2$ lie on a conic $Ax^2+Bxy+Cy^2+Dx+Ey+F$.
\end{ex}

\begin{rem}\label{rem:factorisation-of-vandermonde-earlyremark}
    Unlike the full factorisation of the ordinary Vandermonde determinant, \eqref{eq:vandermonde-ordinary} on p.~\pageref{eq:vandermonde-ordinary}, the generalised Vandermonde determinant in \eqref{eq:generalised-vandermonde-def} over dimension $d\geqslant 2$ does not immediately factorise into a product of linear terms. Yet for special choices of arguments we approach the full factorisation of the generalised Vandermonde determinant in \S\ref{SecConstruct} (see Theorem~\ref{thm:structure-loneliness} on p.~\pageref{thm:structure-loneliness} and Theorem~\ref{thm:golden-formula} on p.~\pageref{thm:golden-formula}, where we calculate the structure constants for particular classes of polynomial algebras with complete generalised Wronskian determinants as $N$\nobreakdash-ary brackets). See Problem~\ref{open:factorisations-vandermonde} and the accompanying Comment on p.~\pageref{open:factorisations-vandermonde} for further comment.
\end{rem}

\begin{prop}[translation invariance]\label{prop:translation-invariance}
    For any (power) $d$-tuples $\bm_1,...,\bm_N\in \BBQ^d$ and any (homogeneous shift) $d$-tuple $\bs\in\BBQ$ we have
    \begin{equation}
        \det\Van_d^k(\bm_1+\bs,...,\bm_N+\bs) = \det\Van_d^k(\bm_1,...,\bm_N),
    \end{equation}
    i.e.\ the complete generalised Vandermonde determinant over base dimension $d\geqslant1$ and degree $k\geqslant 1$ remains, even when $d>1$, translation invariant in the course of an homogeneous translation of all its arguments.
\end{prop}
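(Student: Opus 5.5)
The plan is a row-reduction argument that exploits the downward closure of the set of rows of $\Van_d^k$. The index set $\{\brvec_1,\ldots,\brvec_N\}$ consists of \emph{all} multi-indices $\brvec\in\BBNo^d$ with $|\brvec|\leqslant k$. Hence, if $\brvec$ indexes a row, every $\brvec'\leqslant\brvec$ (componentwise) also indexes a row. Expanding each entry of the shifted matrix by the multinomial binomial theorem, for every $j$ we get
\begin{equation*}
    (\bm_j+\bs)^{\brvec_n}=\prod_{i=1}^d (m_j^i+s^i)^{r_n^i}
    =\sum_{\brvec'\leqslant\brvec_n}\binom{\brvec_n}{\brvec'}\,\bs^{\brvec_n-\brvec'}\,\bm_j^{\brvec'},
\end{equation*}
where $\binom{\brvec}{\brvec'}=\prod_i\binom{r^i}{r'^i}$. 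The coefficients depend only on the row indices $n$ and $\brvec'$, not on the column $j$. Therefore the $n$th row of the shifted matrix equals the $n$th row of $\Van_d^k(\bm_1,\ldots,\bm_N)$ plus a linear combination, with coefficients in $\BBQ$ and independent of $j$, of the rows indexed by the strictly smaller $\brvec'<\brvec_n$. Since $|\brvec'|\leqslant|\brvec_n|\leqslant k$, all of these rows are present in the matrix.

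Next we write this as a matrix identity $\Van_d^k(\bm_1+\bs,\ldots)=T\cdot\Van_d^k(\bm_1,\ldots)$, where $T_{n,n'}=\binom{\brvec_n}{\brvec_{n'}}\bs^{\brvec_n-\brvec_{n'}}$ if $\brvec_{n'}\leqslant\brvec_n$ and $0$ otherwise. Order the rows compatibly with total degree, as the Wronskian $\Bone\wedge\partial_\bx\wedges\partial^k_{\bx\ldots\bx}$ already does, refining within a fixed degree arbitrarily. Then $\brvec_{n'}<\brvec_n$ forces $|\brvec_{n'}|<|\brvec_n|$, which puts $n'$ before $n$, so $T$ is lower unitriangular with diagonal entries $\binom{\brvec_n}{\brvec_n}\bs^{\vec 0}=1$. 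Hence $\det T=1$, and multiplicativity of the determinant gives the claim. Equivalently, one may avoid $T$ and simply subtract the lower rows successively in increasing degree order; this preserves the determinant.

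The only delicate step is the closure observation: it is precisely the \emph{completeness} of the generalised Wronskian (all multi-indices up to degree $k$) that guarantees the lower-order rows needed for the elimination exist. I would stress this point, and perhaps remark that for incomplete Wronskians translation invariance can fail. Everything else is the binomial theorem, applied coordinatewise, together with the bookkeeping that the same coefficients appear in every column.
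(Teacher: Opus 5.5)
Your proposal is correct and is essentially the paper's argument: the paper's proof sketch also expands $(\bm_n+\bs)^{\brvec_j}$ and cancels the lower-order terms by adding shift-dependent multiples of the preceding rows. Your factorisation through the lower unitriangular matrix $T$ makes that sketch precise. So does your explicit use of the downward closure of the complete index set, which is what guarantees that the needed lower rows exist.
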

\begin{proof}[Proof sketch]
    The proof works by subtracting previous lines of the determinant, reappearing later with a common factor depending only on the shift $\bs$.
    Indeed, expand any $(\bm_n+\bs)^{\brvec_j}=\bm_n^{\brvec_j}+(\cdots)$. For differential order 1 we cancel the garbage $(\cdots)$ by adding an $s^i$-multiple of the first row $(1,...,1)$. We cancel the garbage in all subsequent rows by adding $s^i$-multiplies of the previous rows.
\end{proof}

We work over $\Bbbk[x^1,...,x^d]$ with $N$\nobreakdash-ary bracket, $N={d+k\choose d}$, given by the complete generalised Wronskian determinant $W_d^k$ of differential order $k$ over dimension $d$. Note that the rows of any differential order $0\leqslant r\leqslant k$ form the complete set of monomials of degree $r$. For instance, over dimension $d=2$ and differential order $k=2$ in the complete Wronskian $W_{d=2}^{k=2}=\Bone\wedge\partial_x\wedge\partial_y\wedge\partial^2_{xx}\wedge\partial^2_{xy}\wedge\partial^2_{yy}$, let us take degree $r=2$: we obtain the divided power monomials $x^2/2!$, $xy/1!1!$, and $y^2/2!$. Note that these are all the basic monomials of degree 2 over dimension 2; the same is true for each degree $r$ from the range $r\in\{0,1,...,k\}$. Hence the following lemma.

\begin{lem}\label{lem:sum-of-diff-orders-indep-of-i}
    In the complete generalised Wronskian $W_{d\geqslant 2}^{k\geqslant 1}=\bigwedge_{j=1}^N \partial^{|\brvec_j|}/\partial \bx^{\brvec_j}$ over base dimension $d\geqslant 2$ and differential order $k\geqslant 1$, the sum of differential operators' orders in a coordinate $x^i$, $i\in\{1,...,d\}$, \emph{does not depend on $i$} (and equals $\sum_{j=1}^N r_j^i$ by construction).
\end{lem}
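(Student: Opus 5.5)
The rows of $W_d^k$ are indexed, bijectively, by all multi-indices $\brvec\in\BBNo^d$ with $|\brvec|\leqslant k$. This is the observation made just before the statement: for each $r\in\{0,\ldots,k\}$, the rows of total order $r$ run through all monomials of degree $r$. So the quantity $\sum_{j=1}^N r_j^i$ equals
\begin{equation*}
S_i:=\sum_{\brvec\in\BBNo^d,\ |\brvec|\leqslant k} r^i .
\end{equation*}
We must show that $S_i$ does not depend on $i$. I plan to give a symmetry argument, and then a direct count as a cross-check that also yields the value.

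\emph{Symmetry.} Let $\sigma\in S_d$ be the transposition swapping coordinates $i$ and $i'$. It acts on multi-indices by permuting entries, $\brvec\mapsto\sigma\brvec$. This action preserves both $\BBNo^d$ and the total order $|\brvec|$, so it is a bijection of the index set $\{\brvec:|\brvec|\leqslant k\}$ onto itself. Reindexing the sum by $\brvec=\sigma\bs$ gives
\begin{equation*}
S_i=\sum_{\bs}(\sigma\bs)^i=\sum_{\bs}s^{i'}=S_{i'}.
\end{equation*}
The only point that needs care is the completeness of the Wronskian: the set of rows must be closed under permutations of coordinates. That is exactly why the lemma is stated for the complete generalised Wronskian. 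For an incomplete one, such as $\Bone\wedge\partial_x\wedge\partial^2_{xx}$ when $d=2$, the claim fails.

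\emph{Explicit value.} Summing over all coordinates gives
\begin{equation*}
\sum_{i=1}^d S_i=\sum_{|\brvec|\leqslant k}|\brvec|=\sum_{r=0}^{k} r\binom{r+d-1}{d-1}.
\end{equation*}
Using $r\binom{r+d-1}{d-1}=d\binom{r+d-1}{d}$ and the hockey-stick identity, this equals $d\binom{k+d}{d+1}$. Therefore each
\begin{equation*}
S_i=\binom{k+d}{d+1}=\frac{kN}{d+1},
\end{equation*}
which matches the degree shift $kN/(d+1)$ announced in the notation list for Lemma~\ref{lem:combinatorial}.

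I do not expect a real obstacle here. The one step to state carefully is the bijection between the rows of $W_d^k$ and the multi-indices with $|\brvec|\leqslant k$; once that is in place, the symmetry argument is immediate.
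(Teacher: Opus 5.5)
Your argument is correct and follows the paper's reasoning: the paper also derives the lemma from the fact that, for each $r\leqslant k$, the rows of total order $r$ run through all monomials of degree $r$, so the row set is invariant under permutations of coordinates. Your explicit value $\sum_j r_j^i = kN/(d+1)$, obtained from $r\binom{r+d-1}{r}=d\binom{r+d-1}{d}$ and the hockey-stick identity, is the same computation the paper carries out in Lemma~\ref{lem:combinatorial}.
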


By the differential operator and monomial correspondence (basically, $\partial^3/\partial x^2\partial y \leftrightarrows x^2y$), the sum of degrees $r_j^i$ is the same in the Vandermonde determinant relative to the Wronskian as above.

We now calculate this sum of degrees $\sum_{j=1}^N r_j^i$ in a coordinate $x^i$; it is the degrees (but not coordinates) that show up in the Vandermonde determinant. By the symmetry $\sum_{j=1}^N r_j^i = \sum_{j=1}^N r_j^{i'}$ of the \emph{complete} generalised Wronskian $W_d^k$ w.r.t\ base coordinates in it (here $i,i'\in \{1,...,d\}$), the sum equals $\sum_{j=1}^N r_j^i = (1/d)\sum_{j=1}^N r_j$, where $r_j$ is the total differential order of the operator in the $j$th row in the Wronskian $W_d^k$ (see Definition~\ref{def:wronskian} and Notation~\ref{not:jth-row-wronskian} on p.~\pageref{not:jth-row-wronskian}). By indexing the blocks of rows using their common total differential order $r$ (e.g., $r=2$ in dimension $d=2$ gives three differentials $\partial^2_{xx},\partial^2_{xy}$, and $\partial^2_{yy}$), we find that there are ${d+r-1\choose r}$ monomials of degree $r$ over dimension $d$, hence equally many corresponding differential operators. Let us simplify the sum of binomial expressions within $r_j$.

\begin{lem}[combinatorial]\label{lem:combinatorial}
    For any integers $k,d\geq 1$ we have
    \begin{equation}\label{eq:total-degree-sum}
        \sum_{j=1}^N r_j = \sum_{r=0}^k r {d+r-1\choose r} = d{d+k\choose k-1} = \frac{kd}{d+1} N\in \BBN,
    \end{equation}
    where $N={d+k\choose d} = {d+k\choose k}$. Therefore for any $i\in\{1,...,d\}$ we have
    \begin{equation}\label{eq:degree-sum-in-coordinate-i}
        \sum_{j=1}^N r_j^i = \frac{1}{d} \sum_{j=1}^N r_j = \frac{kN}{d+1}\in \BBN.
    \end{equation}
    (The \emph{degree shift} in formula \eqref{eq:degree-sum-in-coordinate-i} and the \emph{total degree shift} in formula \eqref{eq:total-degree-sum} will reappear in Eq.~\eqref{eq:wronskian-of-monoms-vandermonde} and many times onwards.)
\end{lem}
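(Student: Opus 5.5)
The argument has two parts: a purely combinatorial evaluation of $\sum_{r=0}^k r\binom{d+r-1}{r}$, and then the redistribution of this total over the $d$ coordinates using Lemma~\ref{lem:sum-of-diff-orders-indep-of-i}.

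The first equality in \eqref{eq:total-degree-sum} is bookkeeping. The rows of $W_d^k$ are grouped into blocks by total differential order $r\in\{0,1,\ldots,k\}$. The block of order $r$ is in bijection with the monomials of degree $r$ in $d$ variables, of which there are $\binom{d+r-1}{r}$. Each row in that block contributes $r_j=r$, which gives $\sum_j r_j=\sum_{r=0}^k r\binom{d+r-1}{r}$.

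To evaluate the sum, I will use the absorption identity
\[
r\binom{d+r-1}{r}=d\binom{d+r-1}{r-1},
\]
valid for $r\geqslant 1$; the $r=0$ term vanishes on both sides. The sum then becomes $d\sum_{r=1}^{k}\binom{d+r-1}{r-1}=d\sum_{s=0}^{k-1}\binom{(d+1)+s-1}{s}$. By the hockey-stick identity $\sum_{s=0}^{m}\binom{n+s}{s}=\binom{n+m+1}{m}$ with $n=d$ and $m=k-1$, this equals $d\binom{d+k}{k-1}$.

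The next equality is an explicit factorial comparison:
\[
\binom{d+k}{k-1}=\frac{(d+k)!}{(k-1)!\,(d+1)!}=\frac{k}{d+1}\binom{d+k}{k}=\frac{k}{d+1}\,N .
\]
Hence $\sum_j r_j=\frac{kd}{d+1}N$. This number is a natural number because it equals $d\binom{d+k}{k-1}$.

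For \eqref{eq:degree-sum-in-coordinate-i}, I write $\sum_j r_j=\sum_{i=1}^d\sum_j r_j^i$. By Lemma~\ref{lem:sum-of-diff-orders-indep-of-i} each inner sum is independent of $i$. I will justify that lemma via the symmetric group acting on coordinates: permuting the $x^i$ permutes the set of multi-indices $\{\brvec_j\}$ of the complete Wronskian, since all multi-indices with $|\brvec|\leqslant k$ occur. For $d=1$ the statement is trivial.

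Therefore each coordinate sum equals $\frac1d\sum_j r_j=\frac{kN}{d+1}=\binom{d+k}{k-1}\in\BBN$.

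No step here is a real obstacle. The only place that needs care is the index shift when applying the hockey-stick identity, namely making sure the upper parameter comes out as $d+k$ and not $d+k-1$. I would check this against the small case $d=2$, $k=1$: there $N=3$, and the sum should be $2$, with the $x$-sum and the $y$-sum each equal to $1$.
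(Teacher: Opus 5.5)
Your proposal is correct and follows the paper's proof. Both use the same absorption identity (your $\binom{d+r-1}{r-1}$ is the paper's $\binom{d+r-1}{d}$) followed by the hockey-stick identity; you then finish by a direct factorial comparison, where the paper instead goes through Pascal's rule $\binom{d+k}{k-1}=\binom{d+k+1}{k}-\binom{d+k}{k}$. Your symmetric-group justification of the coordinate independence, and your remark that integrality follows from the expressions $d\binom{d+k}{k-1}$ and $\binom{d+k}{k-1}$, make explicit points the paper leaves implicit.
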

\begin{proof}
    First we rewrite, for the common degree $r\geq 1$:
    \begin{equation*}
        r{d+r-1\choose r} = r\frac{(d+r-1)!}{r!\,(d-1)!} = \frac{(d+r-1)!}{(r-1)!\,(d-1)!}\frac{d}{d} = d\frac{(d+r-1)!}{(r-1)!\,d!} = d{d+r-1\choose d}.
    \end{equation*}
    Use the last equality to take $d$ out of the sum,
    \begin{equation*}
        \begin{split}
            \sum_{r=0}^k r{d+r-1\choose r} &= 0+\sum_{r=1}^k r{d+r-1\choose r} = d\sum_{r=1}^k {d+r-1\choose d} = d {d+k\choose d+1},
        \end{split}
    \end{equation*}
    by the `hockey-stick lemma' in the last equality. The result simplifies to $d{d+k\choose k-1}$, which we rewrite by the inductive property of combinations ${n\choose k}={n-1\choose k-1}+{n-1\choose k}$ as follows, setting $n:=d+k+1$,
    \begin{equation*}
        {d+k\choose k-1} = {d+k+1\choose k} - {d+k\choose k} = \frac{d+k+1}{d+1} \frac{(d+k)!}{k!\,d!}-N = \frac{k}{d+1}N.
    \end{equation*}
    Combining all results above yields $\sum_{r=0}^k r{d+r-1\choose r} = \frac{kd}{d+1}N$ as desired.
\end{proof}

We are now ready to generalise Theorem~\ref{thm:mathRA} about the degree shift in the Wronskian determinant of monomials, now over dimension $d>1$, and whereby the Vandermonde determinant of the arguments' powers ($d$-tuples) shows up as the coefficient; see formula \eqref{eq:mathRA-wronskian-of-monoms} on p.~\pageref{eq:mathRA-wronskian-of-monoms}. Let us set up the notation. Let $\bkvec_1,...,\bkvec_N\in \BBQ^d$ be the power $d$-tuples; we put $\bx^{\bkvec_j}=(x^1)^{k_j^1}\cdots(x^d)^{k_j^d}$ as in Notation~\ref{not:multi-index}. We thus obtain $N$ arbitrary (generalised) monomials over base dimension $d$; whenever all $k_j^i\in \BBNo$, they are genuine monomials from $\Bbbk[x^1,...,x^d]$. For a given number $s\in \BBQ$ (or in $\BBNo$), the shift of a power multi-index $\bkvec$ occurs component-wise: let $\Bone=(1,...,1)$ so that $\bkvec+s\Bone:=(k^1+s,...,k^d+s)$. Recall that the complete generalised Wronskian determinant is given, equivalently, by the two formulas: $W_d^k=\Bone\wedge\partial_\bx\wedges\partial^k_{\bx...\bx}=\bigwedge_{j=1}^N \partial^{|\brvec_j|}/\partial \bx^{\brvec_j}$. 


\begin{theor}[generalised Vandermonde]\label{thm:general-vandermonde}
    Fix the base dimension $d\geqslant 1$ and degree $k\geqslant 1$; set $N={d+k\choose k}$. Let $\bx^{\bkvec_1},...,\bx^{\bkvec_N}$ be $N$ arbitrary (generalised) monomials (their powers can be rational numbers). Set $\bkvec:=\sum_{j=1}^N \bkvec_j$ with the component-wise summation: $k^i=\sum_{j=1}^Nk_j^i$. Then the complete generalised Wronskian determinant of these monomials in $d$ coordinates again is a monomial,
    \begin{equation}\label{eq:wronskian-of-monoms-vandermonde}
        W_d^k(\bx^{\bkvec_1},...,\bx^{\bkvec_N}) = \det \Van_d^k(\bk_1,...,\bk_N) \cdot \bx^{\bkvec - kN/(d+1)\Bone},
    \end{equation}
    and
    \begin{equation}\label{eq:det-triangular-equals-det-vandermonde}
        \det \Van_d^k(\bk_1,...,\bk_N) = \det \Delta_d^k(\bk_1,...,\bk_N),    
    \end{equation}
    where the matrix $\Delta_d^k(\bk_1,...,\bk_N)$ is given by
    \begin{equation}\label{eq:triangular-vandermonde-matrix-element}
        \Delta_d^k(\bk_1,...,\bk_N)^{\mathrm{row}~j}_{\mathrm{col}~n} = \prod_{i=1}^d \left\{ \prod_{\ell=0}^{r_j^i -1}(k_n^i-\ell) \right\}.
    \end{equation}
\end{theor}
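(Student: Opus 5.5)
We follow the pattern of Example~\ref{ex:vander-k1d2}. First compute the matrix entries of the Wronskian using formula~\eqref{eq:differential-operator-acting-on-monomial}: the $(j,n)$ entry is
\[
\frac{\partial^{|\brvec_j|}\bx^{\bkvec_n}}{\partial\bx^{\brvec_j}}=\bx^{\bkvec_n-\brvec_j}\cdot\prod_{i=1}^d\Bigl\{\prod_{\ell=0}^{r_j^i-1}(k_n^i-\ell)\Bigr\}=\bx^{\bkvec_n}\,\bx^{-\brvec_j}\,\Delta_d^k(\bk_1,\ldots,\bk_N)^{\mathrm{row}~j}_{\mathrm{col}~n}.
\]
This identity of generalised monomials holds formally for rational exponents; when a factor vanishes the entry is zero on both sides. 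The entry thus factors as a column factor $\bx^{\bkvec_n}$ times a row factor $\bx^{-\brvec_j}$ times a number.

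By multilinearity of the determinant in rows and columns, we pull out $\prod_n\bx^{\bkvec_n}=\bx^{\bkvec}$ and $\prod_j\bx^{-\brvec_j}=\bx^{-\sum_j\brvec_j}$. By Lemma~\ref{lem:sum-of-diff-orders-indep-of-i} and Lemma~\ref{lem:combinatorial}, $\sum_j\brvec_j=\frac{kN}{d+1}\Bone$. This gives
\[
W_d^k(\bx^{\bkvec_1},\ldots,\bx^{\bkvec_N})=\det\Delta_d^k(\bk_1,\ldots,\bk_N)\cdot\bx^{\bkvec-kN/(d+1)\Bone}.
\]
It remains to prove~\eqref{eq:det-triangular-equals-det-vandermonde}.

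The main step is the equality $\det\Delta_d^k=\det\Van_d^k$. Each entry of $\Delta$ in row $j$ is the evaluation at $\bm=\bk_n$ of the polynomial
\[
P_j(\bm)=\prod_i (m^i)^{\underline{r_j^i}},
\]
a product of falling factorials. Expanding each falling factorial as $(m)^{\underline r}=m^r+\sum_{s<r}c_{r,s}m^s$ (with Stirling coefficients $c_{r,s}$) gives
\[
P_j=\bm^{\brvec_j}+\sum_{\brvec<\brvec_j}c\,\bm^{\brvec},
\]
where $\brvec<\brvec_j$ componentwise, so every correction term has total degree $<r_j$. The crucial point uses completeness of the Wronskian: every such multi-index $\brvec$, having $|\brvec|<r_j\leqslant k$, is itself some $\brvec_{j'}$ and indexes a row of the matrix. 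Hence $\Delta=U\cdot\Van_d^k$, where $U$ is unitriangular with respect to any ordering of rows by increasing total order (as in the Wronskian). Then $\det U=1$, which gives the claim.

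Equivalently, one can perform row operations from top to bottom, subtracting multiples of earlier rows exactly as in the proof of Proposition~\ref{prop:translation-invariance}.

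The only delicate point is this unitriangularity. It needs the downward closure of the set $\{\brvec_j\}$ under componentwise decrease, which holds precisely because $W_d^k$ is complete. Everything else is bookkeeping with exponents.
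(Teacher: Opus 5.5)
Your proposal is correct and follows the paper's proof. You factor a column monomial and a row monomial out of each entry (the paper uses $\bx^{\bkvec_n-k\Bone}$ and $\bx^{k\Bone-\brvec_j}$ instead, which only keeps exponents non-negative), take the degree shift from Lemma~\ref{lem:combinatorial}, and then remove the lower-order falling-factorial terms using earlier rows. Your factorisation $\Delta_d^k=U\cdot\Van_d^k$ with $U$ unitriangular is a more careful version of the paper's brief ``subtract the preceding rows'' step, because you state explicitly why it works: completeness of $W_d^k$ makes $\{\brvec_j\}$ closed under componentwise decrease.
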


\begin{define}\label{def:quasi-triangular-form}
    The matrix $\Delta_d^k$ is the \emph{quasi-triangular} form of the Vandermonde matrix.
\end{define}

\begin{note}\label{note:choice-of-term-quasi-triangular}
    The choice of the term, `quasi-triangular', is motivated by the fact that whenever $\bk_j=\br_j$ for all the lines of the generalised Wronskian determinant (so $j\in \{1,...,N\}$), the matrix $\Delta_d^k$ is triangular. Let us emphasise that some other choice of the $N$ monomials as arguments for $W_d^k$ \emph{may result in a non-triangular matrix} $\Delta_d^k$; still, its determinant is equal to the Vandermonde determinant of the power $d$-tuples $\bk_j$ of the monomial arguments.
\end{note}

\begin{rem}
    Suppose all $\bx^{\bkvec_j}\in \Bbbk[x]$ are actual monomials (i.e.\ $k_j^i>0$ for all $i,j$). If, after the degree shift, for some coordinate $x^i$ we have that the resulting degree is negative, $\sum_{j=1}^N k_j^i-kN/(d+1)<0$, then the Vandermonde determinant must vanish identically. (Indeed, any derivatives and products of monomials are monomials; it is impossible to produce a Laurent monomial.) See also Eq. \eqref{eq:differential-operator-acting-on-monomial} on p.~\pageref{eq:differential-operator-acting-on-monomial} and the surrounding discussion.
    
    In Proposition~\ref{prop:sufficient-vandermonde-zero} on p.~\pageref{prop:sufficient-vandermonde-zero} we list other cases when the Vandermonde determinant in front of the monomial equals zero.
\end{rem}

\begin{rem}
    Theorem~\ref{thm:general-vandermonde} reduces to the old Theorem~\ref{thm:mathRA} on p.~\pageref{thm:mathRA} when $d=1$. Indeed, for $d=1$ we have $N=k+1$, hence $kN/(d+1)=(N-1)\cdot N/2=N(N-1)/2$.
\end{rem}

\begin{proof}[Proof (of Theorem~\ref{thm:general-vandermonde})] 
    The proof follows the same approach as in Example~\ref{ex:vander-k1d2}. For any monomial $a(\bx)=c\bx^\brvec$, $c$ is the coefficient and $\bx^\brvec$ the term; we solve for both in Eq. \eqref{eq:wronskian-of-monoms-vandermonde}.

    The differential operator $\partial^{|\brvec_j|}/\partial\bx^{\brvec_j}$ in the $j$th row of the Wronskian acting on the monomial $\bx^{\bkvec_n}$ gives the term $\bx^{\bkvec_n-\brvec_j}$ and the coefficient in \eqref{eq:triangular-vandermonde-matrix-element}. As $k$ is the highest total differential order of the Wronskian, $|\brvec_j|\leqslant k$, factor $\bx^{\bkvec_n-k\Bone}$ from the $n$th colmn of the determinant, which leaves $\bx^{k\Bone-\brvec_j}$, the same term in the $j$th row; factor it, leaving only numbers in the entries of the determinant. The overall term is thus $\bx^\bnvec$, where the power $d$-tuple $\bnvec$ is given by
    \begin{equation}
        \bnvec =\sum_{n=1}^N (\bkvec_n-k\Bone) + \sum_{j=1}^N(k\Bone - \brvec_j) = \sum_{j=1}^N \bkvec_j - \frac{kN}{d+1}\Bone = \bkvec - \frac{kN}{d+1}\Bone,
    \end{equation}
    proving the statement in formula \eqref{eq:wronskian-of-monoms-vandermonde}.

    Expand each row in \eqref{eq:triangular-vandermonde-matrix-element}; the term of the highest degree in row $j$ and column $n$ is then $\bk_n^{\brvec_j}$, which is exactly the element in the Vandermonde determinant. As all lower-degree terms share an homogeneous coefficient in all rows, subtracting the preceding rows leaves only the top-degree term, proving that the two determinants are equal -- \eqref{eq:det-triangular-equals-det-vandermonde}. The proof is complete.
\end{proof}

\begin{ex}
    It is often much easier to compute the complete generalised Vandermonde determinant by using its quasi-triangular form. For instance, over dimension $d=2$ with degree $k=2$, and all standard monomials up to degree $k=2$ as arguments (namely, $1,x,y,x^2,xy,y^2$ corresponding to the power $d$-tuples $\brvec_1=(0,0)$, followed by $(1,0),...,(1,1)$ until $\brvec_6=(0,2)$, respectively), we have
    \begin{align}
        \det \Van_{d=2}^{k=2}(\brvec_1,...,\brvec_6) &= \det \begin{pmatrix}
            1&1&1&1&1&1\\
            0&1&0&2&1&0\\
            0&0&1&0&1&2\\
            0&1&0&4&1&0\\
            0&0&0&0&1&0\\
            0&0&1&0&1&4
        \end{pmatrix}=4,\\
    \intertext{whereas the determinant of the quasi-triangular matrix $\Delta_d^k(\br_1,...,\br_6)$ is}
        \det \Delta_{d=2}^{k=2}(\brvec_1,...,\brvec_6) &= \det \begin{pmatrix}
            1&1&1&1&1&1\\
            &1&0&2&1&0\\
            &&1&0&1&2\\
            &&&2&0&0\\
            &\bigzero&&&1&0\\
            &&&&&2
        \end{pmatrix}=4.
    \end{align}
    Indeed, in this example the determinant equals the product of entries on the main diagonal.
\end{ex}

By examining the mechanism by which the complete generalised Wronskian shifts the degree of each co\"ordinate (see Theorem~\ref{thm:general-vandermonde}), we give the following definitions.

\begin{define}[shifts and degrees]\label{def:wronsk-shift-and-monomial-degree}
    Let $a(\bx)\in \Bbbk[x^1,...,x^d]$ be a monomial, $a(\bx)=\bx^\bnvec=(x^1)^{n_1}\cdots (x^d)^{n_d}$. The \emph{total degree} of $a(\bx)$ is $\deg(a)=n_1+\cdots+n_d$; the \emph{degree of $a(\bx)$ in co\"ordinate $x^i$} is $\deg_i(a)=n_i$. The definition of total degree extends to an homogeneous polynomial, that is, to the sum of monomials if they all have the same total degree.

    The \emph{total degree shift} produced by the \emph{complete}\footnote{
        Breaking the completeness assumption voids this definition. Indeed, the degree shift will be different in each coordinate; the total degree shift will be reduced by the total differential orders of whatever differential operators are excluded from the Wronskian (cf.\ \cite{AVK25YerQTS13WrosnkBJP} for the definition and discussion of incomplete Wronskians).
        }
    generalised Wronskian $W_{d\geqslant 1}^{k\geqslant 1}$ of differential order $k$ over dimension $d$ is the integer $kd\cdot N/(d+1)$ and the \emph{degree shift in coordinate $x^i$} is the integer\footnote{Cf.\ Lemma~\ref{lem:combinatorial}: the shift $kN/(d+1)$ can be expressed as a binomial coefficient.} $kN/(d+1)$.
\end{define}

\begin{cor}[degree shift]\label{corr:degree-shift}
    With the same notation as in Theorem~\ref{thm:general-vandermonde}, the complete generalised Wronskian $W_{d\geqslant 1}^{k\geqslant 1}$ acts on monomials $\bx^{\bkvec_j}$ (with all degrees $k^i_j\in \BBNo$) in such a way that the degrees of monomials satisfy the two relations:
    \begin{equation}\label{eq:total-deg-bound}
        \deg W_d^k(\bx^{\bkvec_1},...,\bx^{\bkvec_N}) = 
        \begin{cases}
            \sum_{j=1}^N \bkvec_j - \frac{kd}{d+1}N & \text{if}\quad \det \Van_d^k(\bk_1,...,\bk_N) \neq 0, \\
            0 &\text{otherwise},
        \end{cases}
    \end{equation}
    and
    \begin{equation}\label{eq:deg-i-bound}
        \deg_i W_d^k(\bx^{\bkvec_1},...,\bx^{\bkvec_N}) = 
        \begin{cases}
            \sum_{j=1}^N \bkvec_j - \frac{kN}{d+1} & \text{if}\quad \det \Van_d^k(\bk_1,...,\bk_N) \neq 0, \\
            0 &\text{otherwise}.
        \end{cases}
    \end{equation}
    That is, the degree of the resulting monomial is shifted down by the degree shift.
\end{cor}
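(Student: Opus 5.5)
The corollary is a direct consequence of Theorem~\ref{thm:general-vandermonde}, so I would derive it by reading off degrees from formula~\eqref{eq:wronskian-of-monoms-vandermonde}. That theorem shows that $W_d^k(\bx^{\bkvec_1},...,\bx^{\bkvec_N})$ is the single monomial $\bx^{\bkvec-kN/(d+1)\Bone}$ multiplied by the scalar $\det\Van_d^k(\bk_1,...,\bk_N)$. The argument therefore splits into two cases, according to whether this scalar is zero.

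If $\det\Van_d^k\neq 0$, the Wronskian is a nonzero multiple of $\bx^{\bnvec}$ with $\bnvec=\bkvec-\frac{kN}{d+1}\Bone$. Its degree in $x^i$ is the $i$th component, $\sum_{j=1}^N k_j^i-\frac{kN}{d+1}$; this is how I read the right-hand side of~\eqref{eq:deg-i-bound}, namely componentwise. Summing over $i=1,\dots,d$ gives the total degree $\sum_{j}|\bkvec_j|-\frac{kd}{d+1}N$, which is \eqref{eq:total-deg-bound}. Lemma~\ref{lem:combinatorial} guarantees that the shifts $kN/(d+1)$ and $kdN/(d+1)$ are integers.

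In this nonzero case I would also confirm that the exponents are genuinely nonnegative, so that $\bx^{\bnvec}$ is an honest monomial of the stated degree. By Theorem~\ref{thm:general-vandermonde} the determinant equals $\det\Delta_d^k$, whose entries are given by~\eqref{eq:triangular-vandermonde-matrix-element}. Alternatively, one can argue directly: every entry $\partial^{|\brvec_j|}\bx^{\bkvec_n}/\partial\bx^{\brvec_j}$ is either zero or a monomial with nonnegative exponents, by~\eqref{eq:differential-operator-acting-on-monomial}. Hence the Wronskian is a polynomial, and a nonzero polynomial cannot equal a nonzero multiple of a Laurent monomial with a negative exponent. This is the argument of the remark following the theorem.

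If $\det\Van_d^k=0$, the Wronskian is the zero polynomial. The corollary assigns degree $0$ by convention in this case, so I would simply record that convention; it agrees with the statement's ``otherwise'' branch.

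The only subtle point is this convention for the zero polynomial, together with the slightly loose notation ``$\sum_j\bkvec_j$'', which must be read as a total degree in~\eqref{eq:total-deg-bound} and as a component in~\eqref{eq:deg-i-bound}. No real obstacle remains, because all the substantive work, including the degree-shift computation via Lemma~\ref{lem:combinatorial}, is already contained in the proof of Theorem~\ref{thm:general-vandermonde}.
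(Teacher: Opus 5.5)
Your proposal is correct and follows the paper's route. The paper gives the corollary no separate proof: it is read off directly from formula~\eqref{eq:wronskian-of-monoms-vandermonde} in Theorem~\ref{thm:general-vandermonde}, with the per-coordinate shift $kN/(d+1)$ and the total shift $kdN/(d+1)$ coming from Lemma~\ref{lem:combinatorial}, and your check that the exponents are nonnegative when $\det\Van_d^k\neq 0$ is exactly the content of the remark following that theorem, while your componentwise reading of the loose notation $\sum_j\bkvec_j$ is the intended one.
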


\begin{rem}\label{rem:degree-shift-equals-dividedpower}
    The degree shift $kN/(d+1)$ in a coordinate $x^i$, produced by the complete generalised Wronskian $W_{d\geqslant1}^{k\geqslant 1}$, is exactly equal to the sum of differential orders w.r.t\ $x^i$ (namely, $\sum_{j=1}^N r_j^i$) of the operators from the rows ($j\in \{1,...,N\}$) in the Wronskian determinant. Equivalently, this degree shift equals the sum of degrees in $x^i$ from the standard monomials (i.e.\ monomial $\bx^{\brvec_j}$ corresponding to the operator $\partial^{|\brvec_j|}/\partial \bx^{\brvec_j}$ in the $j$th row of the Wronskian). Note that, by construction, $\brvec_1=(0,...,0)$, $\brvec_2=(1,0,...,0)$, and so on until $\brvec_N=(0,...,0,k)$. Now it is readily seen why $W_d^k(\bx^{\brvec_1}/\brvec_1!,...,\bx^{\brvec_N}/\brvec_N!)=1$, a degree-zero monomial in $d$ variables.
\end{rem}

\begin{prop}\label{prop:sufficient-vandermonde-zero}
    Each of the following conditions alone is  sufficient for the complete generalised Vandermonde determinant of $d$-tuples $\bm_1,...,\bm_N\in \BBQ^d$ to be zero:
    \begin{enumerate}[label=({\it\roman*})]
        \item\label{prop8-i:duplicate columns} duplicate columns: $\bm_i=\bm_{i'}$ for $i'\neq i$;
        \item\label{prop8-ii:coinciding-degrees} all degrees in a coordinate coincide: $m_1^i=m_2^i=\cdots=m_N^i$ for some $i\in\{1,...,d\}$;
        \item\label{prop8-iii:prop-sufficient-vandermonde:deficiency} (\emph{`deficient degree'}): if the power $d$-tuples of monomials $\bm_1,...,\bm_N\in \BBN_{\geqslant 0}^d$ are such that their sum is less than the degree shift: $\sum_{j=1}^n m_j^i<kN/(d+1)$ for some $i$.
    \end{enumerate}
    In all cases the determinant vanishing is equivalent to the columns being linearly dependent.
\end{prop}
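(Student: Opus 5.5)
Each of the three conditions yields the vanishing of $\det\Van_d^k(\bm_1,\ldots,\bm_N)$, and in each case I would exhibit an explicit linear dependence among the columns (or, dually, the rows), which also settles the closing remark about linear dependence.

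\emph{Case (i).} If $\bm_i=\bm_{i'}$ with $i\neq i'$, then for every row index $n$ the entries $\bm_i^{\brvec_n}$ and $\bm_{i'}^{\brvec_n}$ coincide. Hence columns $i$ and $i'$ are equal, the columns are dependent, and the determinant vanishes by antisymmetry.

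\emph{Case (ii).} Suppose $m_1^i=\cdots=m_N^i=:c$ for some coordinate $i$. Consider the first row $(1,\ldots,1)$, which corresponds to $\brvec_1=0$, and the row $n$ with $\brvec_n$ equal to the $i$th unit multi-index. Such a row exists because $k\geqslant1$. Its entries are $m_j^i=c$ for all $j$, so row $n$ equals $c$ times row $1$. The rows are therefore dependent; since the matrix is square, its columns are dependent as well.

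\emph{Case (iii).} For the deficient-degree condition I would go through the Wronskian rather than the matrix directly. By Theorem~\ref{thm:general-vandermonde},
\[
W_d^k(\bx^{\bm_1},\ldots,\bx^{\bm_N})=\det\Van_d^k(\bm_1,\ldots,\bm_N)\cdot \bx^{\sum_j\bm_j-\frac{kN}{d+1}\Bone}.
\]
The left-hand side is built from derivatives and products of genuine monomials with exponents in $\BBN_{\geqslant0}$, so it lies in $\Bbbk[\bx]$. If $\sum_j m_j^i<kN/(d+1)$, the monomial on the right has a negative exponent in $x^i$. A nonzero multiple of such a Laurent monomial is not a polynomial, so the coefficient must be zero.

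To make the dependence statement explicit here, I would use the quasi-triangular form $\Delta_d^k$ from \eqref{eq:triangular-vandermonde-matrix-element}, which has the same determinant. Each entry is $\prod_i\prod_{\ell<r_j^i}(m_n^i-\ell)$, and this falls factorial vanishes whenever $r_j^i>m_n^i$. A pigeonhole count then shows that some set of rows is supported on too few columns. Concretely, the rows with large $x^i$-order can be nonzero only in columns whose $m^i$ is large, and the deficiency forces there to be more such rows than available columns. The resulting zero block makes the determinant vanish.

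The main obstacle is turning this pigeonhole argument into a rigorous statement. The clean route is the polynomiality argument through Theorem~\ref{thm:general-vandermonde}, and I would rely on it. One point needs care there: the expansion of the Wronskian of honest monomials uses \eqref{eq:differential-operator-acting-on-monomial}, whose vanishing falling factorials are exactly what keeps the result polynomial. For the final sentence, linear dependence of the columns then follows from vanishing of the determinant of a square matrix over a field.
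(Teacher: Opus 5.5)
Your proof is correct and follows the paper's own argument: equal columns for (i); for (ii), the row of the operator $\partial/\partial x^i$ equal to $m_1^i$ times the row of units; and for (iii), the polynomiality of the Wronskian of genuine monomials set against the negative exponent in $x^i$ that Theorem~\ref{thm:general-vandermonde} would produce. Your pigeonhole aside is also easy to make rigorous if you want an argument internal to the matrix: every nonzero term of the Leibniz expansion of $\det\Delta_d^k(\bm_1,\ldots,\bm_N)$ comes from a permutation $\sigma$ with $r_j^i\leqslant m_{\sigma(j)}^i$ for all $j$, and summing over $j$ gives $kN/(d+1)=\sum_j r_j^i\leqslant\sum_n m_n^i$, which contradicts deficiency.
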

\begin{proof}
    (\textit{i}) Is immediate. (\textit{ii}) Follows by subtracting $m_1^i$ times the row $(1,...,1)$ of units from the $(i+1)$st row in the complete generalised Vandermonde determinant, obtaining an all-zeroes row. (\textit{iii}) Follows from Theorem~\ref{thm:general-vandermonde} as we have a deficiency of degree, which would produce a non-monomial -- impossible.
\end{proof}

\begin{rem}[multivariate generalisation of the Witt algebra]\label{rem:multivar-gen-of-witt-algebra}
    In Remark~\ref{rem:oct25-witt} on p.~\pageref{rem:oct25-witt} (see also Theorem~\ref{thm:mathRA}) we found the strong homotopy deformations of the Witt algebra in the one-dimensional case when $N$ is even; this was achieved by applying a degree shift to the complex Laurent monomials (with possibly half-integer degrees). Upon such a shift the canonical commutation relations were obtained: $[a_{i_1},...,a_{i_N}]_N=\Omega(i_1,...,i_N)\cdot a_{i_1+\cdots+i_N}$ with $\Omega = \det \Van_{d=1}^{0,1,...,N-1}$ totally antisymmetric w.r.t\ its arguments. 
    
    We now continue with the deformation approach, generalising the Witt algebra over dimension $d>1$. Take some generalised monomials $\bx^{\bkvec_j}=(x^1)^{k_j^1}\cdots (x^d)^{k_j^d}$ with the rational power $d$-tuples $k_j^i\in \BBQ$ and coordinates $\bx=(x^1,...,x^d)$ over base dimension $d$; by means of Theorem~\ref{thm:general-vandermonde}, calculate the $N$\nobreakdash-ary bracket $[\cdot,...,\cdot]_N$, $N={d+k\choose d}$, given by the complete generalised Wronskian determinant $W_d^k$ of generalised monomial arguments. The result is a (generalised) monomial; its degree is made of up the component-wise sum of arguments' degrees minus the homogeneous shift $kN/(d+1)$. Apply the shift $s\in \BBQ$ to all arguments: $a_{\bi_n}:=\bx^{\bivec_n+s\Bone}$, where $\bi_n\in \BBQ^d$ is the power $d$-tuple and $\Bone=(1,...,1)$. To recover the canonical commutation relations the shift must satisfy
    \begin{align}\label{eq:index-shift}
        Ns - \frac{kN}{d+1} &= s, & \text{hence} && s &= \frac{k}{d+1}\cdot \frac{N}{N-1},
    \end{align}
    which in general is not an integer.\footnote{
    We remark that this shift is the same as the homogenisation constant $y^*$ for homogenising the recurrence $y_n=y_{n-1}+y_{n-2}+\cdots+y_{n-N}-kN/(d+1)$. Namely, with $x_n=y_n-y^*$ we have $x_n=x_{n-1}+\cdots+x_{n-N}$.
    } This gives the commutation relations
    \begin{equation}\label{eq:witt-d-commutation-relations}
        [a_{\bi_1},...,a_{\bi_N}]_N=\Omega(\bi_1,...,\bi_N)\,a_{\bi_1+\cdots+\bi_N},
    \end{equation}
    where the structure constant $\Omega=\det\Van_d^k$ is totally antisymmetric w.r.t\ its arguments and the index sum is taken component-wise: $i_1^i+\cdots+i_N^i$ with $i\in\{1,...,d\}$.
\end{rem}

\section{Explicit construction of a class of polynomial algebras $\cA\subseteq\Bbbk[x^1,\ldots,x^d]$ with Wronskian brackets}\label{SecConstruct}
\noindent%
In this section we find all the finite-dimensional $N$\nobreakdash-ary strongly homotopy (SH) Lie polynomial subalgebras $\EuA\subseteq \Bbbk[\bx]$ containing the constant unit polynomial $1$ both in the algebra, $1\in \EuA$, and in the span of the image of the bracket, $1 \in \Span_\Bbbk W_d^k(\EuA,...,\EuA)$. 
We are interested in this condition because if $1\in \EuA$ is not in the (span of the) image of the bracket, the algebra $\EuA$ is not perfect (see Definition~\ref{def:perfect-lie-algebra} on p.~\pageref{def:perfect-lie-algebra}), hence it cannot be semi-simple; we shall give an equivalent condition for $1\in \Span_\Bbbk W_d^k(\EuA,...,\EuA)$.

We shall describe (a class of) all finite-dimensional algebras $\EuA$; we show that there are no others subject to the restrictions above. Among these finite-dimensional algebras, we characterise which are or cannot be perfect for each base dimension and differential order $k$; we give remedying examples of perfect algebras. 
Finally, we show that, subject to the assumptions above, no other finite-dimensional algebras exist, and we comment on why, even with the assumptions dropped, probably there exist no other (non-trivial, or perfect) finite-dimensional algebras.

\medskip
Recall (cf.\ Remark~\ref{rem:degree-shift-equals-dividedpower} on p.~\pageref{rem:degree-shift-equals-dividedpower}) that $W_d^k(\bx^{\brvec_1}/\brvec_1!,...,\bx^{\brvec_N}/\brvec_N!)=1$, where $\brvec_j$ is the power $d$-tuple of the differential operator $\partial^{\brvec_j}/\partial \bx^{\brvec_j}$ in the $j$th row of the complete generalised Wronskian determinant $W_d^k=\Bone\wedge \partial_\bx\wedge\cdots\wedge \partial^k_{\bx\hdots\bx}=\bigwedge_{j=1}^N \partial^{\brvec_j}/\partial \bx^{\brvec_j}$. Replacing any argument by a higher total-degree monomial makes the degree sum strictly greater than $kN/(d+1)$, which is the shift downwards by the Wronskian determinant; hence $1$ cannot be obtained on the new set of arguments. Let us formalise this claim and prove it (see Appendix~\ref{SecAppConsistency} on p.~\pageref{SecAppConsistency}).

\begin{lem}[consistency]\label{lem:consistency}
    Fix any dimension $d\geqslant 1$ and any differential order $k\geqslant 1$; set $N={d+k\choose k}$. Let $a_1(\bx),...,a_N(\bx)\in \Bbbk[\bx]$ be arbitrary \emph{monomials}. Then
    \begin{align}
        W_d^k(a_1(\bx),...,a_N(\bx))&=1 && \text{implies} & a_j(\bx)&= \bx^{\brvec_j}/\brvec_j! \quad\forall j\leqslant N
    \end{align}
    up to (positive-sign) reordering and normalisation constants of $a_j$'s such that their product equals $+1$.
\end{lem}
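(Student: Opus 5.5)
The plan is to reduce the statement to a degree-counting argument on the quasi-triangular form $\Delta_d^k$ from Theorem~\ref{thm:general-vandermonde}. Write each argument as $a_j(\bx)=c_j\,\bx^{\bkvec_j}$ with $c_j\in\Bbbk\setminus\{0\}$ and $\bkvec_j\in\BBNo^d$. We may assume all $c_j\neq0$, since otherwise the Wronskian vanishes. By multilinearity and formula~\eqref{eq:wronskian-of-monoms-vandermonde},
\[
W_d^k(a_1,\ldots,a_N)=\Bigl(\prod_{j=1}^N c_j\Bigr)\det\Delta_d^k(\bk_1,\ldots,\bk_N)\cdot \bx^{\bkvec-kN/(d+1)\Bone}.
\]
So the hypothesis $W_d^k(a_1,\ldots,a_N)=1$ gives two conditions. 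First, $\det\Delta_d^k(\bk_1,\ldots,\bk_N)\neq0$. Second, $\sum_{n=1}^N k_n^i=kN/(d+1)$ for every coordinate $i\in\{1,\ldots,d\}$.

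The key step is to extract a combinatorial matching from the non-vanishing of $\det\Delta_d^k$. In the Leibniz expansion $\det\Delta_d^k=\sum_{\sigma\in S_N}\operatorname{sgn}(\sigma)\prod_{n=1}^N(\Delta_d^k)^{\mathrm{row}\,\sigma(n)}_{\mathrm{col}\,n}$, at least one term must be nonzero. Fix such a permutation $\sigma$. By~\eqref{eq:triangular-vandermonde-matrix-element}, the entry in row $\sigma(n)$ and column $n$ is $\prod_i\prod_{\ell=0}^{r^i_{\sigma(n)}-1}(k_n^i-\ell)$. Since $k_n^i\in\BBNo$, this falling factorial is nonzero only if $k_n^i\geqslant r^i_{\sigma(n)}$; this is exactly the observation after~\eqref{eq:differential-operator-acting-on-monomial}. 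Hence $\bkvec_n\geqslant\brvec_{\sigma(n)}$ componentwise for every $n$. I expect this passage from ``determinant nonzero'' to ``a componentwise-dominating matching exists'' to be the main point. With nonnegative integer exponents it is elementary, but one must insist on working with $\Delta_d^k$, where vanishing of entries is transparent, rather than with the raw Vandermonde matrix, whose entries $\bk_n^{\brvec_j}$ carry no such information. The equality $\det\Van_d^k=\det\Delta_d^k$ of Theorem~\ref{thm:general-vandermonde} licenses this switch.

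Next, sum the inequalities over $n$ and use Lemma~\ref{lem:combinatorial}, equivalently Remark~\ref{rem:degree-shift-equals-dividedpower}:
\[
\sum_{n=1}^N k_n^i\;\geqslant\;\sum_{n=1}^N r^i_{\sigma(n)}=\sum_{j=1}^N r^i_j=\frac{kN}{d+1}.
\]
Combined with the equality forced by the degree of the output monomial, every single inequality must be tight. Therefore $\bkvec_n=\brvec_{\sigma(n)}$ for all $n$, so the arguments are, up to the reordering $\sigma$ and scalar factors, exactly the standard monomials $\bx^{\brvec_j}/\brvec_j!$.

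Finally, the constants. Write $a_n=\bigl(c_n\,\brvec_{\sigma(n)}!\bigr)\cdot\bx^{\brvec_{\sigma(n)}}/\brvec_{\sigma(n)}!$. By antisymmetry and the normalisation $W_d^k(\bx^{\brvec_1}/\brvec_1!,\ldots,\bx^{\brvec_N}/\brvec_N!)=1$,
\[
W_d^k(a_1,\ldots,a_N)=\operatorname{sgn}(\sigma^{-1})\prod_{n=1}^N c_n\,\brvec_{\sigma(n)}!.
\]
This equals $1$ precisely when the product of the normalisation constants, corrected by the sign of the reordering, equals $+1$, which is the asserted form. As a side remark, pairwise distinctness of the arguments is automatic, since $\sigma$ is a bijection onto the distinct multi-indices $\brvec_j$.
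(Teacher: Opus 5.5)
Your proof is correct, but it takes a different route from the paper's proof in Appendix~\ref{SecAppConsistency}.

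\textbf{The paper's route.} The paper uses only total degrees and a counting argument. A repeated argument kills the Wronskian, so the $N$ monomials are pairwise distinct, and at most $\binom{d+t-1}{t}$ of them have total degree $t$. Suppose $n>0$ arguments had degree $>k$. They would leave $n$ vacancies among the degrees $\leqslant k$, each lowering the total by at most $k$, while each of them contributes at least $k+1$. The total degree sum would then strictly exceed $kdN/(d+1)$, contradicting the vanishing of the output exponent. Hence all degrees are $\leqslant k$, and $N$ distinct monomials of degree $\leqslant k$ must be the whole standard set. Beyond the degree formula, the only fact about the determinant that the paper uses is the duplicate-column criterion.

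\textbf{Your route.} You exploit the zero pattern of $\Delta_d^k$. A nonvanishing Leibniz term yields a bijection $\sigma$ with $\bkvec_n\geqslant\brvec_{\sigma(n)}$ componentwise. The exact per-coordinate exponent condition then forces equality in every component.

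\textbf{What each buys.} Your argument is shorter and avoids the bookkeeping of how many arguments sit in each degree. It gives distinctness for free, and it produces the explicit matching $\sigma$ that you then reuse for the sign and the constants. As a by-product, it gives a clean proof of the deficiency criterion of Proposition~\ref{prop:sufficient-vandermonde-zero}\,\ref{prop8-iii:prop-sufficient-vandermonde:deficiency}: a nonzero Wronskian of genuine monomials forces $\sum_n k_n^i\geqslant kN/(d+1)$. The paper justifies that criterion only by the remark that a non-monomial cannot arise. The paper's route, in turn, is more elementary, since it never needs the explicit entries of $\Delta_d^k$.

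A small remark: you do not actually need the identity $\det\Van_d^k=\det\Delta_d^k$. The number matrix left after factoring powers of $\bx$ out of the rows and columns of the Wronskian is literally $\Delta_d^k$, so the switch you worry about is automatic.
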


\medskip
We therefore immediately see that the requirement $1\in \Span_\Bbbk W_d^k(\EuA,...,\EuA)\subseteq \EuA$ is equivalent to $\EuA$ containing all polynomials up to degree $k$, i.e.\ $\Bbbk_k[\bx]\subseteq \EuA\subseteq \Bbbk[\bx]$. The case $\EuA=\Bbbk_k[\bx]$ is trivially closed as the resultant always lies in $\Bbbk$. Hence the following definition.

\begin{define}[Types of polynomial $N$\nobreakdash-ary SH-Lie algebras]\label{def:consistent-lonely-chubby-tall-lanky}
    Fix the dimension $d\geqslant 1$ and differential order $k\geqslant 1$; set $N={d+k\choose k}$ and consider the complete generalised Wronskian determinant $W_d^k$. An SH-Lie subalgebra of polynomials $\EuA\subseteq \Bbbk[x^1,...,x^d]$ with the $N$\nobreakdash-ary bracket given by $W_d^k$ is called \emph{trivial} if $\EuA\subseteq \Bbbk_k[\bx]$, and \emph{consistent} (with respect to $W_d^k$)  if it contains $\Bbbk_k[\bx]$ as a proper subset, i.e.\ $\Bbbk_k[\bx]\subsetneq \EuA\subseteq \Bbbk[\bx]$.

    In what follows we will usually take $\Bbbk_k[\bx]\subsetneq \EuA\subseteq \Bbbk_{k+1}[\bx]$. If $\EuA$ from $\Bbbk_{k+1}[\bx]$ contains (the span of) exactly one $(k+1)$th degree monomial or homogeneous polynomial, i.e.\ $\EuA=\Bbbk_k[\bx]\oplus \langle p\rangle$, we say that $\EuA$ is \emph{lonely}.\footnote{This class, $\EuA=\Bbbk_k[\bx]\oplus\langle p(\bx)\rangle$, will form the class of finite-dimensional algebras (see Theorem~\ref{thm:main-1-existence} on p.~\pageref{thm:main-1-existence}); it is this what permits us to write the equality sign $=$ in the definition: `Lonely is finite-dimensional.'} On the other hand, if $\EuA$ contains at least two linearly independent $(k+1)$th degree monomials or homogeneous polynomials $p\not\sim q\in \Bbbk_{k+1}[\bx]$, that is, $\EuA\supseteq \Bbbk_k[\bx]\oplus \langle p,q\rangle$, we say that the algebra $\EuA$ is \emph{chubby}.\footnote{We will show in Theorem~\ref{thm:main-2:non-existence} on p.~\pageref{thm:main-2:non-existence} that none such algebra is finite-dimensional: `Chubby is infinite-dimensional.'} 
    
    We say that the algebra $\EuA$ is \emph{tall} if it contains any polynomial of degree $>k+1$.
    If a tall algebra $\EuA$ contains only one monomial of degree $k+1$, then we say that $\EuA$ is \emph{lanky}; we will soon see that in this case, $\EuA=\Bbbk_k[\bx]\oplus\langle(x^i)^{k+1},...,(x^i)^{k+\ell}\rangle$ for some fixed $i\in\{1,...,d\}$ and $\ell\geqslant 2$ (see Lemma~\ref{lem:chubby-tall-lanky} below). If, on the contrary, a tall algebra contains multiple linearly independent monomials (or homogeneous polynomials) of degree $k+1$, then the tall algebra $\EuA$ is also chubby. 
\end{define}

Using the fact that the algebra $\EuA$ is closed under the Wronskian determinant, we will produce the $(k+1)$th degree monomials as resulting from the Wronskian determinant by using arguments from $\EuA$. By multilinearity and the degree-shift mechanism in Theorem~\ref{def:gen-vandermonde} on p.~\pageref{thm:general-vandermonde}, it suffices to consider $\EuA$ spanned by monomials. The following lemma explains our choice of terminology in Definition~\ref{def:consistent-lonely-chubby-tall-lanky} (see also the visualisation in Figure~\ref{fig:visaliation-of-algebras}).

\begin{lem}\label{lem:chubby-tall-lanky}
    Suppose $\EuA$ is a tall polynomial SH-Lie algebra, i.e.\ $\EuA$ contains a monomial\footnote{More generally, for a polynomial $p(\bx)$ in place of $a(\bx)$, extend the results of the lemma by decomposing $p(\bx)$ into its constituent monomials and applying the lemma to each monomial.}  $a(\bx)$ of degree $\deg(a)>k+1$, and suppose $\EuA$ is consistent: now $\EuA \supseteq \Bbbk_k[\bx]\oplus \langle a(\bx)\rangle$. Depending on $a(\bx)$ we have the following:
    \begin{enumerate}[label=({\it\roman*})]
        \item\label{item:lem-chubby} if $a(\bx)=(x^i)^{k+\ell}$ for some $i$ with $\ell\geq 2$, then $(x^i)^{k+\ell-1},...,(x^i)^{k+1}\in \EuA$,\hfill (\emph{lanky
        $\EuA$})
        \item\label{item:lem-lanky} otherwise (i.e.\ $x^ix^j|a(\bx)$ for some $j\neq i$) the algebra $\EuA$ is chubby.\hfill (\emph{chubby $\EuA$})
    \end{enumerate}
\end{lem}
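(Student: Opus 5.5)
The idea is to produce lower-degree monomials inside $\EuA$ by taking Wronskians of $a(\bx)$ with $N-1$ standard monomials from $\Bbbk_k[\bx]\subseteq\EuA$. By Theorem~\ref{thm:general-vandermonde} and Remark~\ref{rem:degree-shift-equals-dividedpower}, the choice of arguments is natural. Replace one standard monomial $\bx^{\brvec_j}/\brvec_j!$ in the tuple $(\bx^{\brvec_1}/\brvec_1!,\dots,\bx^{\brvec_N}/\brvec_N!)$ by $a(\bx)=\bx^{\bnvec}$. The resulting Wronskian is then a multiple of the monomial $\bx^{\bnvec-\brvec_j}$.

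The first step is to compute its coefficient. I would use the quasi-triangular form $\Delta_d^k$ of Definition~\ref{def:quasi-triangular-form}. Equivalently, expand the determinant directly: every standard column except the one replaced is killed by all rows of higher order, or of non-dominated multi-index. Ordering the rows by a graded order, the matrix is then triangular up to the single column of $a$. So the Wronskian equals, up to sign,
\[
\frac{\partial^{|\brvec_j|}}{\partial\bx^{\brvec_j}}\bigl(\bx^{\bnvec}\bigr)\;\text{minus lower-order corrections.}
\]
More carefully, I would expand along the column of $a$. The cofactor of the entry in row $i$ is the Wronskian-type minor of the remaining standard monomials, and I expect it to vanish unless row $i$ equals row $j$, since the remaining standard monomials miss the operator $\partial^{\brvec_j}$. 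With that, the result is $\pm\,\partial^{\brvec_j}(\bx^{\bnvec})$, which is a nonzero multiple of $\bx^{\bnvec-\brvec_j}$ whenever $\brvec_j\leqslant\bnvec$ componentwise. Verifying this cofactor vanishing, i.e.\ that the mixed minors with the operator row $\brvec_i\neq\brvec_j$ evaluate to zero, is the step I expect to be the main obstacle. I would handle it by a degree count via Corollary~\ref{corr:degree-shift}: the minor's output monomial would need a negative exponent somewhere, or would equal a constant only if $\brvec_i=\brvec_j$ by Lemma~\ref{lem:consistency}.

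Granting this, we conclude that $\EuA$ contains $\bx^{\bnvec-\brvec}$ for every $\brvec$ with $|\brvec|\leqslant k$ and $\brvec\leqslant\bnvec$. The two cases then follow.

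In case~\ref{item:lem-chubby}, $a=(x^i)^{k+\ell}$. Take $\brvec=(\ell'-1)\be_i$ with $1\leqslant\ell'-1\leqslant \min(k,\ell-1)$; this gives $(x^i)^{k+\ell-\ell'+1}\in\EuA$. Iterating downward from each newly obtained power covers all of $(x^i)^{k+\ell-1},\dots,(x^i)^{k+1}$, each step lowering the exponent by $1$ with $\brvec=\be_i$.

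In case~\ref{item:lem-lanky}, $x^ix^j\mid a$ with $i\neq j$ and $\deg a\geqslant k+2$. First lower the degree using single steps $\brvec=\be_m$ while preserving the divisibility by $x^ix^j$; this is possible as long as $\deg\geqslant k+2$, by choosing $m$ so that a factor of $x^i$ and a factor of $x^j$ remain. That yields a degree-$(k+2)$ monomial $b$ with $x^ix^j\mid b$. Then $b/x^i$ and $b/x^j$ are two distinct degree-$(k+1)$ monomials in $\EuA$. Hence $\EuA\supseteq\Bbbk_k[\bx]\oplus\langle p,q\rangle$ with $p\not\sim q$, i.e.\ $\EuA$ is chubby.
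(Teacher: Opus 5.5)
Your strategy (replace one standard monomial by $a$, use closure, descend) is the paper's, but the step you flagged as the main obstacle fails. In the expansion along the column of $a$, the cofactors of the rows $s\neq j$ do \emph{not} all vanish, so the bracket is not $\pm\,\partial^{|\brvec_j|}a/\partial\bx^{\brvec_j}$. Your degree count only shows that the minor obtained by deleting row $s$ and the column of $a$ is a multiple of $\bx^{\brvec_s-\brvec_j}$. Hence that minor is zero unless $\brvec_s\geqslant\brvec_j$ componentwise. For $\brvec_s>\brvec_j$ it is a non-constant monomial with a generally non-zero coefficient, and Lemma~\ref{lem:consistency}, which only concerns the output $1$, says nothing about it.

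Concretely, for $d=1$, $k=2$ one gets $W_{d=1}^{k=2}(1,x^n,x^2/2)=n(2-n)\,x^{n-1}$. Here the $\partial_x^2$-row contributes $-n(n-1)x^{n-1}$ through its cofactor $-x$. At $n=2$ the bracket vanishes (proportional columns), although $\partial_x(x^2)\neq 0$. The correct value is Theorem~\ref{thm:structure-loneliness}: the bracket equals $\frac{(-1)^{k-r_j}}{(k-r_j)!}\prod_{\ell=r_j+1}^{k}(\deg a-\ell)\cdot\partial^{|\brvec_j|}a/\partial\bx^{\brvec_j}$. So the non-vanishing of the coefficient is not automatic. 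It holds only under a degree condition on $a$, which your argument never uses at this point.

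The repair is to invoke that theorem, which is exactly what the paper's proof does. Then observe that the extra factor $\prod_{\ell=r_j+1}^{k}(\deg-\ell)$ never vanishes in your descents, because every monomial you feed into the bracket has degree $>k$:
\begin{itemize}
\item in case (i), these are $(x^i)^m$ with $m\geqslant k+2$;
\item in case (ii), these are the intermediate monomials $b$ with $\deg b\geqslant k+2$.
\end{itemize}
This is where the tallness hypothesis actually enters. With that correction the rest of your argument is sound. In case (ii), your explicit descent to a degree-$(k+2)$ monomial still divisible by $x^ix^j$, before splitting it into $b/x^i$ and $b/x^j$, is more careful than the paper's one-line sketch. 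That sketch only produces two non-proportional monomials of degree $\deg a-1$, not of degree $k+1$.
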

We postpone the proof until we will have proven a structure Theorem~\ref{thm:structure-loneliness} (needed for the proof). See \S\ref{subSecNonExistence} for the proof on p.~\pageref{proof:lemma-chubby-tall-lanky}.
\begin{figure}[h!]
    \centering%
\mbox{
\begin{tikzpicture}[x=1.7cm,y=0.7cm]
    \def\yzero{0}
    \def\yk{2.2}
    \def\ykone{2.9}
    \def\yktwo{3.6}
    \def\ykthree{4.2} 
    \def\ykfour{4.8}

    \draw[->] (0,0) -- (0,5.3) node[above] {degree};

    \foreach \y/\label in {
        \yzero/{0},
        \yk/{k},
        \ykone/{k+1},
        \yktwo/{k+2},
        \ykfour/{k+\ell}
    }{
        \draw (-0.08,\y) -- (0.08,\y);
        \node[left] at (0,\y) {$\label~$};
    }

    \filldraw[fill=gray!25,draw=black]
        (0.2,\yzero) rectangle (1,\yk);
    \node at (0.6,{(\yzero+\yk)/2}) {\large $\Bbbk_k[\bx]$};

    
    \node[below] at (0.6,0) {\phantom{y}trivial $\subseteq$\phantom{y}};
\end{tikzpicture}\hspace{-0.8cm}%
\begin{tikzpicture}[x=1.7cm,y=0.7cm]
    \def\yzero{0}
    \def\yk{2.2}
    \def\ykone{2.9}
    \def\yktwo{3.6}
    \def\ykthree{4.2} 
    \def\ykfour{4.8}

    \draw[->] (0,0) -- (0,5.3) node[above] {degree};

    \foreach \y/\label in {
        \yzero/{0},
        \yk/{k},
        \ykone/{k+1},
        \yktwo/{k+2},
        \ykfour/{k+\ell}
    }{
        \draw (-0.08,\y) -- (0.08,\y);
        \node[left] at (0,\y) {$\label~$};
    }

    \filldraw[fill=gray!25,draw=black]
        (0.2,\yzero) rectangle (1,\yk);
    \node at (0.6,{(\yzero+\yk)/2}) {\large $\Bbbk_k[\bx]$};

    \fill (0.6,\ykone) circle (2.5pt);
    \node[above] at (0.6,\ykone) {$p(\bx)$};
    
    \node[below] at (0.6,0) {lonely $=$};
\end{tikzpicture}\hspace{0.05cm}%
\begin{tikzpicture}[x=1.7cm,y=0.7cm]
    \def\yzero{0}
    \def\yk{2.2}
    \def\ykone{2.9}
    \def\yktwo{3.6}
    \def\ykthree{4.2} 
    \def\ykfour{4.8}

    \draw[->] (0,0) -- (0,5.3) node[above] {degree};

    \foreach \y/\label in {
        \yzero/{0},
        \yk/{k},
        \ykone/{k+1},
        \yktwo/{k+2},
        \ykfour/{k+\ell}
    }{
        \draw (-0.08,\y) -- (0.08,\y);
        \node[left] at (0,\y) {$\label~$};
    }

    \filldraw[fill=gray!25,draw=black]
        (0.2,\yzero) rectangle (1,\yk);
    \node at (0.6,{(\yzero+\yk)/2}) {\large $\Bbbk_k[\bx]$};

    \fill (0.45,\ykone) circle (2.5pt);
    \node[above] at (0.35,\ykone) {$p(\bx)$};
    
    \fill (0.75,\ykone) circle (2.5pt);
    \node[above] at (0.85,\ykone) {$q(\bx)$};

    \node[below] at (0.6,0) {chubby $\supseteq$};

    \draw[->] (1.45,5.8) -- (0.45,5.8);

    \node[above] at (0.95,5.8) {\footnotesize{Lem.\,\ref{lem:chubby-tall-lanky}\ref{item:lem-chubby}}};
\end{tikzpicture}\hspace{-0.6cm}%
\begin{tikzpicture}[x=1.7cm,y=0.7cm]
    \def\yzero{0}
    \def\yk{2.2}
    \def\ykone{2.9}
    \def\yktwo{3.6}
    \def\ykthree{4.2} 
    \def\ykfour{4.8}

    \draw[->] (0,0) -- (0,5.3) node[above] {degree};

    \foreach \y/\label in {
        \yzero/{0},
        \yk/{k},
        \ykone/{k+1},
        \yktwo/{k+2},
        \ykfour/{k+\ell}
    }{
        \draw (-0.08,\y) -- (0.08,\y);
        \node[left] at (0,\y) {$\label~$};
    }

    \filldraw[fill=gray!25,draw=black]
        (0.2,\yzero) rectangle (1,\yk);
    \node at (0.6,{(\yzero+\yk)/2}) {\large $\Bbbk_k[\bx]$};

    \fill (0.6,\ykfour) circle (2.5pt);
    \node[above] at (0.6,\ykfour) {$p(\bx)$};
    
    \node[below] at (0.6,0) {\phantom{y}tall $\supseteq$\phantom{y}};
    \draw[->] (0.45,5.8) -- (1.1,5.8);

    \node[above] at (0.75,5.8) {\footnotesize{Lem.\,\ref{lem:chubby-tall-lanky}\ref{item:lem-lanky}}};
\end{tikzpicture}\hspace{-0.9cm}%
\begin{tikzpicture}[x=1.7cm,y=0.7cm]
    \def\yzero{0}
    \def\yk{2.2}
    \def\ykone{2.9}
    \def\yktwo{3.6}
    \def\ykthree{4.2} 
    \def\ykfour{4.8}

    \draw[->] (0,0) -- (0,5.3) node[above] {degree};

    \foreach \y/\label in {
        \yzero/{0},
        \yk/{k},
        \ykone/{k+1},
        \yktwo/{k+2},
        \ykfour/{k+\ell}
    }{
        \draw (-0.08,\y) -- (0.08,\y);
        \node[left] at (0,\y) {$\label~$};
    }

    \filldraw[fill=gray!25,draw=black]
        (0.2,\yzero) rectangle (1,\yk);
    \node at (0.6,{(\yzero+\yk)/2}) {\large $\Bbbk_k[\bx]$};

    \fill (0.6,\ykone) circle (2.5pt);
    \node[right] at (0.6,\ykone) {$(x^i)^{k+1}$};
    
    \fill (0.6,\yktwo) circle (2.5pt);
    \node[right] at (0.6,\yktwo) {$(x^i)^{k+2}$};
    
    \node at (0.6,\ykthree+0.15) {$\vdots$};
    
    \fill (0.6,\ykfour) circle (2.5pt);
    \node[right] at (0.6,\ykfour) {$(x^i)^{k+\ell}$};
    
    \node[below] at (0.6,0) {lanky $\supseteq$};
\end{tikzpicture}%
}
\vspace{-1.3cm}
    \caption{Visualisation of polynomial SH-Lie algebras from Definition~\ref{def:consistent-lonely-chubby-tall-lanky}. The set inclusions denote whether the subspace of polynomials from $\Bbbk[\bx]$ drawn is a subset of the algebra ($\subseteq$), equal to it ($=$), or contained in it ($\supseteq$).}
    \label{fig:visaliation-of-algebras}
\end{figure}

\subsection{Construction of finite-dimensional subalgebras and their structure constants}\label{subSecConstruction}
In this (sub)section we find all consistent (i.e.\ $\EuA\supsetneq \Bbbk_k[\bx]$ or, equivalently, $1\in \EuA$ such that also $1\in \Span_\Bbbk W_d^k(\EuA,...,\EuA)$) finite-dimensional polynomial subalgebras $\EuA\subsetneq \Bbbk[\bx]$ over any base dimension $d\geqslant 1$ and any differential order $k\geqslant 1$ in the complete generalised Wronskian determinant as the SH-Lie $N$\nobreakdash-ary bracket $[\cdot,...,\cdot]_N=W_{d\geqslant 1}^{k\geqslant 1}=\Bone\wedge \partial_\bx \wedges \partial^k_{\bx...\bx}$.

We now give a special case of the first main theorem: existence of a finite-dimensional class of polynomial algebras $\EuA$. Namely, suppose $\EuA$ is lonely and we restrict to the special case when $p$ of $\deg(p)=k+1$ is a \emph{monomial}; we denote the monomial by $a(\bx)$. To show that any homogeneous $p$ of $\deg(p)$ works too, we need a structure Theorem~\ref{thm:structure-loneliness}.

\begin{lem}[special case of our first main Theorem~\ref{thm:main-1-existence}: existence]
\label{lem:main-1-monom}
    Fix any dimension $d\geqslant 1$ and any order $k\geqslant 1$; set $N={d+k\choose k}$. Let $\EuA=\Bbbk_k[\bx]\oplus \langle a(\bx)\rangle$, where $a(\bx)$ is a \emph{monomial} of $\deg(a)=k+1$. Then $\EuA$ is \emph{closed} under the bracket $W_d^k=\Bone\wedge \partial_\bx \wedge\cdots\wedge \partial^k_{\bx\hdots \bx}$. In particular, $\EuA$ is a finite-dimensional subalgebra of $\Bbbk[\bx]$ with the Wronskian $W_d^k$ as the $N$\nobreakdash-ary bracket.
\end{lem}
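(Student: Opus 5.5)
By multilinearity and antisymmetry of $W_d^k$, it suffices to check closedness on $N$ pairwise distinct basis monomials of $\EuA$. The basis consists of the $N$ standard monomials $\bx^{\brvec_j}/\brvec_j!$, which span $\Bbbk_k[\bx]$, together with the single monomial $a(\bx)$ of degree $k+1$. There are therefore only two kinds of $N$-tuples of distinct basis elements. In the first kind all $N$ arguments are standard monomials. In the second kind $a(\bx)$ is one argument and the remaining $N-1$ arguments are the standard monomials with exactly one, say $\bx^{\brvec_n}/\brvec_n!$, omitted. Any tuple with a repeated entry gives zero.

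In the first case, Remark~\ref{rem:degree-shift-equals-dividedpower} gives $W_d^k(\ldots)=\pm1\in\Bbbk_k[\bx]\subseteq\EuA$.

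In the second case I apply Theorem~\ref{thm:general-vandermonde}, which says the result is a scalar multiple of a single monomial. By Lemma~\ref{lem:combinatorial} the degree in each coordinate $x^i$ is the sum of the arguments' degrees in $x^i$ minus $kN/(d+1)$. The standard monomials together contribute exactly $kN/(d+1)$ in each coordinate (Remark~\ref{rem:degree-shift-equals-dividedpower}). Replacing $\bx^{\brvec_n}$ by $a(\bx)=\bx^{\bm}$ therefore leaves a monomial proportional to $\bx^{\bm-\brvec_n}$ of total degree $k+1-|\brvec_n|$. This is either zero or a genuine monomial: Proposition~\ref{prop:sufficient-vandermonde-zero}\ref{prop8-iii:prop-sufficient-vandermonde:deficiency}, or equivalently formula \eqref{eq:differential-operator-acting-on-monomial}, forces the coefficient to vanish whenever some component of $\bm-\brvec_n$ is negative.

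If $|\brvec_n|\geqslant 1$, the result has total degree $\leqslant k$ and so lies in $\Bbbk_k[\bx]\subseteq\EuA$. If $|\brvec_n|=0$, i.e.\ the omitted argument is $1$, the result is proportional to $\bx^{\bm}=a(\bx)$ itself, which again lies in $\EuA$. In every case the bracket lands in $\Bbbk_k[\bx]\oplus\langle a\rangle=\EuA$, so $\EuA$ is closed and has dimension $N+1$.

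The only delicate step is making sure no Laurent or non-polynomial term sneaks in when the degree bookkeeping would formally give negative exponents. I would handle it by citing the vanishing statement in Proposition~\ref{prop:sufficient-vandermonde-zero}\ref{prop8-iii:prop-sufficient-vandermonde:deficiency} coordinatewise, or directly by the remark after \eqref{eq:differential-operator-acting-on-monomial} that the Wronskian of genuine monomials is a polynomial. The exact values of the nonzero structure constants, such as the coefficient of $a$ in the case $|\brvec_n|=0$, are not needed for closedness.
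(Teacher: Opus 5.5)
Your proof is correct and follows essentially the same route as the paper: reduce to tuples of basis monomials, observe that the only nontrivial case is one standard monomial $\bx^{\brvec_n}/\brvec_n!$ replaced by $a(\bx)$, and use the coordinatewise degree shift from Theorem~\ref{thm:general-vandermonde} to see that the result has degree $\leqslant k$ unless the replaced argument is $1$, in which case it is proportional to $a$ itself. Your explicit appeal to the deficiency condition, Proposition~\ref{prop:sufficient-vandermonde-zero}\,\ref{prop8-iii:prop-sufficient-vandermonde:deficiency}, to exclude negative exponents is a small piece of care that the paper leaves implicit.
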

\begin{proof}
    If all arguments in the Wronskian determinant are of order at most $k$, the result is a constant monomial; the sum of degrees in all $N$ monomials in $\Bbbk_k[\bx]$ is thus $kN/(d+1)$. Replacing the argument $\bx^{\brvec_j}/\brvec_j!\in \Bbbk_k[\bx]$ (see Notation~\ref{def:divpowermonomial-standardmonomial}) by the monomial $a(\bx)$ of degree $\deg(a)=k+1$ yields, via the Wronskian determinant $W_d^k$, the monomial of which the degree in coordinate $x^i$ is:
    \begin{align*}
        \deg_i W_d^k(1,\bx^{\brvec_2}/\brvec_2!,...,\underbrace{a(\bx)}_{\widehat{\bx^{\brvec_j}/\brvec_j!}},...,\bx^{\brvec_N}/\brvec_N!) &= \frac{kN}{d+1}-\deg_i(\bx^{\brvec_j}) + \deg_i a(\bx) - \frac{kN}{d+1} \\
        &= \deg_i a(\bx) - \deg_i (\bx^{\brvec_j}) \leq \deg_i a(\bx),
    \end{align*}
    where equality is attained only if $\deg_i(\bx^{\brvec_j})=0$. The total degree of the resulting monomial can therefore be $k+1$ only if $\deg_i(\bx^{\brvec_j})=0$ for every $i\in\{1,...,d\}$, which means that $j=1$ and $\bx^{\brvec_1}=1$; this yields the monomial $a(\bx)$ itself (up to a constant). In particular, no other monomials of degrees $k+1$ (or higher) can be obtained. The subspace $\EuA$ is thus closed under $W_d^k$. This completes the proof.
\end{proof}

\begin{ex}\label{ex:examples-of-findim-algebras-k1}
    Let the base dimension be $d=3$ and the differential order be $k=1$; here $\Bbbk_k[\bx]=\langle 1,x,y,z\rangle$. The polynomial subspaces
    \begin{align}
        \EuA &=\langle1,x,y,z,x^2\rangle, &
        &\text{or},&
        \EuA &=\langle1,x,y,z,xy\rangle, &
        &...,&
        &\text{or},&
        \EuA &=\langle1,x,y,z,z^2\rangle,
    \end{align}
    are all closed under the Wronskian determinant $W_{d=3}^{k=1}=\Bone\wedge \partial/\partial x\wedge \partial/\partial y\wedge \partial/\partial z$. For instance, setting $\EuA=\langle 1,x,y,z,xy\rangle$, we have
    \begin{align*}
        W_{d=3}^{k=1}(x,y,z,xy)=\det\begin{vmatrix}
            x&y&z&xy\\ 1&0&0&y\\ &1&0&x\\ &&1&0
        \end{vmatrix} &= xy,
        \intertext{calculated explicitly, and}
        W_{d=3}^{k=1}(1,y,z,xy)=\det\begin{vmatrix}
            1&1&1&1\\ &0&0&1\\ &1&0&1\\ &&1&0
        \end{vmatrix}y &= y,
    \end{align*}
    calculated by formula~\eqref{eq:wronskian-of-monoms-vandermonde} in Theorem~\ref{thm:general-vandermonde} on p.~\pageref{thm:general-vandermonde}. Notice that the monomial $z$ is not in the image of the bracket, $z\notin \Span_\Bbbk W_d^k(\EuA,...,\EuA)$, as any choice of independent monomial arguments from $\EuA=\langle1,x,y,z,xy\rangle$ can have at most sum-of-degrees in $z$ be $1=kN/(d+1)$.
\end{ex}

\begin{ex}\label{ex:examples-of-findim-d=k=2}
    Take both the dimension and differential order to be $d=2=k$. Then we obtain finite-dimensional polynomial algebras $\EuA=\langle 1,x,y,x^2,xy,y^2\rangle \oplus \langle a(\bx)\rangle$ for any of the choices $a(\bx)=x^3$, $x^2y$, $xy^2$, $y^3$. For instance, with $a(\bx)=xy^2$:
    \begin{equation*}
        W_{d=2}^{k=2}(1,x,y,x^2,xy,xy^2)=4x,
    \end{equation*}
    where we replaced the argument $y^2$ by $xy^2$. Note that there is no way to obtain the monomial $x^2$ as the result of the Wronskian determinant with arguments from this $\EuA=\Bbbk_2[\bx]\oplus \langle xy^2\rangle$: the arguments' degree sum minus shift in $x^1=x$ is at most $\deg_x a(\bx)=1$. The algebra $\EuA$ is hence not perfect (see Definition~\ref{def:perfect-lie-algebra} on p.~\pageref{def:perfect-lie-algebra} and Proposition~\ref{prop:imperfection} on p.~\pageref{prop:imperfection}).
\end{ex}

\begin{ex}[appending Laurent monomials] 
    Let us extend the finite-dimensional ternary polynomial algebra $\EuA=\langle 1,x,y, xy\rangle$ over base dimension $d=2$ by the span of Laurent monomials: $\EuA'=\EuA\oplus \EuL$, where $\EuL$ is generated by pure Laurent monomials and monomials with degree at most 1:
    \begin{equation}
        \EuL = \langle\{ \bx^{\bnvec} : \bnvec \in \BBZ^d_{\leqslant 1}\}\rangle=\langle x/y, x/y^2,..., y/x,y/x^2,..., 1/x, 1/y, 1/{xy}, 1/{xy^2}, ...\rangle.
    \end{equation}
    It is clear that $\EuL$ alone is closed under the ternary bracket: the only way to get a monomial with degree $>1$ in a coordinate is for all arguments from $\EuL$ to have that coordinate to degree 1; this makes the second row in the complete generalised Vandermonde determinant equal to the first, hence the determinant vanishes (see Proposition~\ref{prop:sufficient-vandermonde-zero}\,\ref{prop8-ii:coinciding-degrees} on p.~\ref{prop:sufficient-vandermonde-zero}).
    
    Setting $W_{d=2}^{k=1}=\Bone\wedge \partial_x\wedge \partial_y$, 
    we have the following ternary commutation relations:
    \begin{align*}
         [1,x,xy]&=x, &[1,y,xy]&=-y, &[x,y,xy]&=-xy,
         \intertext{describing the finite-dimensional algebra itself, and}
         [x,y,{1}/{y}]&=2/{y}, & [xy,{1}/{x},{1}/{y}]&=3/{xy} &[1,x,/{y}] &= -{1}/{y^2}, 
         \intertext{describing the interaction with the purely-negative degrees, and}
         [x,xy,{1}/{y}]&=x/{y}, & [x/y,xy,1/y]&=2x/y^4 &[x,x/y,x/y^4] &= 0, 
    \end{align*}
    which shows that $\EuA'=\EuA\oplus \EuL$ is closed. Note also that $[y,xy,x/y]=-2x\in \EuA$.
\end{ex}

The proof of Lemma~\ref{lem:main-1-monom} is silent about the coefficient -- our generalised Vandermonde determinant. In particular, can we take an homogeneous polynomial $p$ of total degree $\deg(p)=k+1$? We now give a structure theorem: an explicit factorisation of the complete generalised Vandermonde determinant\footnote{The complete generalised Vandermonde determinant involved (even in the triangular form) is difficult to expand; it is possible in $k=1$. See Appendix~\ref{SecAppExpansion} for the direct proof.} relevant for our case. Indeed, from $\EuA=\Bbbk_k[\bx]\oplus \langle p\rangle$ the only choices of $N$ monomial arguments are the divided power monomials (all from $\Bbbk_k[\bx]$) or one such monomial replaced by $p$. 

\begin{theor}[\emph{structure of loneliness}: structure constants for lonely algebras {$\EuA=\Bbbk_k[\bx]\oplus\langle a\rangle$}]\label{thm:structure-loneliness}
    Fix any dimension $d\geqslant 1$ and any differential order $k\geqslant 1$, and set $N={d+k\choose k}$. For any fixed index $j\in \{1,...,N\}$ and any \underline{\emph{monomial}} $a(\bx)=(x^1)^{n_1}\cdots (x^d)^{n_d}$ we have
    \begin{equation}\label{eq:structure-equation}
        W_d^k\Bigl(1,\frac{\bx^{\brvec_2}}{\brvec_2!},...,\underbrace{a(\bx)}_{\widehat{\bx^{\brvec_j}/\brvec_j!}},...,\frac{\bx^{\brvec_N}}{\brvec_N!}\Bigr) = \frac{(-)^{k-r_j}}{(k-r_j)!} \prod_{i=1}^d \left\{ \prod_{\ell=0}^{r_j^i-1} (n_i-\ell) \right\}\prod_{\ell=r_j+1}^k (\deg a-\ell)\cdot  \frac{a(\bx)}{\bx^{\brvec_j}},
    \end{equation}
    where if $\bx^{\brvec_j} \nmid a(\bx)$ then the first product in the coefficient is zero.
\end{theor}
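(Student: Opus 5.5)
The plan is to expand the determinant in \eqref{eq:structure-equation} along the $j$th column, the one containing $a(\bx)$, and to recognise the cofactors as entries of an explicitly invertible matrix. Let $M_0(\bx)$ be the Wronskian matrix of the standard arguments $\bx^{\brvec_1}/\brvec_1!,\ldots,\bx^{\brvec_N}/\brvec_N!$. Its entries are
\[
\frac{\partial^{|\brvec_i|}}{\partial\bx^{\brvec_i}}\Bigl(\frac{\bx^{\brvec_n}}{\brvec_n!}\Bigr)=\frac{\bx^{\brvec_n-\brvec_i}}{(\brvec_n-\brvec_i)!}
\]
when $\brvec_i\leqslant\brvec_n$ componentwise, and $0$ otherwise. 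With rows and columns ordered by total degree, $M_0$ is upper unitriangular, so $\det M_0=1$ (cf.\ Remark~\ref{rem:degree-shift-equals-dividedpower}). The Laplace expansion along column $j$ gives
\[
W=\sum_i(-1)^{i+j}\,\mathrm{minor}_{ij}(M_0)\,\partial^{|\brvec_i|}a/\partial\bx^{\brvec_i}.
\]
Because $\det M_0=1$, each signed cofactor $(-1)^{i+j}\mathrm{minor}_{ij}(M_0)$ equals the entry $(M_0^{-1})_{ji}$.

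The key step is to compute $M_0^{-1}$. I will interpret $M_0(\bx)$ as the matrix of the Taylor shift $f(\cdot)\mapsto f(\cdot+\bx)$ acting on $\Bbbk_k[\bx]$ in the divided-power basis. This action preserves $\Bbbk_k$, and the truncation to degree $\leqslant k$ is compatible with composition because the matrix is triangular with respect to the degree grading. Hence $M_0(\bx)M_0(\bm y)=M_0(\bx+\bm y)$, and in particular $M_0(\bx)^{-1}=M_0(-\bx)$. As a fallback, one can check directly that $\sum_{\bs}(-1)^{|\bs|}\binom{\brvec}{\bs}=\delta_{\brvec,0}$. This gives $(M_0^{-1})_{ji}=(-\bx)^{\brvec_i-\brvec_j}/(\brvec_i-\brvec_j)!$ for $\brvec_j\leqslant\brvec_i$, and $0$ otherwise.

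Next I substitute $\brvec_i=\brvec_j+\bs$, where $|\bs|\leqslant k-r_j$ and completeness of $W_d^k$ guarantees that every such $\bs$ occurs. Put $b:=\partial^{|\brvec_j|}a/\partial\bx^{\brvec_j}$. By \eqref{eq:differential-operator-acting-on-monomial}, $b$ equals the first product in the statement times $a/\bx^{\brvec_j}$, and it is a monomial of degree $m=\deg a-r_j$ (or zero). The Wronskian becomes
\[
\sum_{t=0}^{k-r_j}(-1)^t\sum_{|\bs|=t}\frac{\bx^{\bs}}{\bs!}\frac{\partial^{|\bs|}b}{\partial\bx^{\bs}}.
\]
On the monomial $b=\bx^{\bn'}$, the inner sum is $\sum_{|\bs|=t}\prod_i\binom{n'_i}{s_i}\,b=\binom{m}{t}b$ by the Vandermonde convolution. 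The outer alternating partial sum is then $\sum_{t=0}^{K}(-1)^t\binom{m}{t}=(-1)^K\binom{m-1}{K}$ with $K=k-r_j$. Writing $\binom{m-1}{K}=\frac{1}{K!}\prod_{\ell=1}^{K}(\deg a-r_j-\ell)=\frac{1}{K!}\prod_{\ell=r_j+1}^{k}(\deg a-\ell)$ yields exactly the coefficient in \eqref{eq:structure-equation}. If $\bx^{\brvec_j}\nmid a$, then $b=0$ and both sides vanish.

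I expect the main obstacle to be the bookkeeping in the cofactor step. One must confirm that the sign $(-1)^{i+j}$ together with the transpose produces $(M_0^{-1})_{ji}$ rather than $(M_0^{-1})_{ij}$, and that the ordering of the Wronskian rows matches the ordering of the standard columns, so that no extra sign appears. The group-law argument for $M_0^{-1}$ also needs the observation that the truncation to $\Bbbk_k[\bx]$ is multiplicative. I would verify all of this against Example~\ref{ex:examples-of-findim-d=k=2}: there $j=6$, $\brvec_j=(0,2)$, $a=xy^2$ and $K=0$, so the formula gives $2\cdot x=2x$, whereas the example reports $4x$. The discrepancy should come from the normalisation $y^2$ versus $y^2/2!$ used in that example, and checking this is a good test of the conventions.
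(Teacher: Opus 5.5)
Your proof is correct, and it takes a genuinely different route from the paper. The paper argues indirectly. It regards the coefficient as a polynomial of degree $\leqslant k$ in $\bn$, reads off the linear factors $(\deg a-\ell)$ and $(n_i-\ell)$ from choices of $a$ that coincide with another standard monomial, and fixes the remaining constant by setting $a=\bx^{\brvec_j}$. You compute the determinant outright. Cramer's rule settles your sign worry at once, since the bracket is $(M_0^{-1}v)_j$ with $v_i=\partial^{|\brvec_i|}a/\partial\bx^{\brvec_i}$. Together with $M_0(\bx)^{-1}=M_0(-\bx)$ it gives, with $a$ in slot $j$,
\[
W_d^k(\ldots,a,\ldots)=\sum_{|\bs|\leqslant k-r_j}(-1)^{|\bs|}\,\frac{\bx^{\bs}}{\bs!}\,\frac{\partial^{|\bs|}}{\partial\bx^{\bs}}\Bigl(\frac{\partial^{|\brvec_j|}a}{\partial\bx^{\brvec_j}}\Bigr).
\]
Chu--Vandermonde and the alternating partial sum of binomials then finish the job, sign and normalisation included.

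Your route buys three things.
\begin{enumerate}
\item \emph{Rigour.} For $d\geqslant 2$, the paper passes from vanishing at finitely many lattice points to divisibility by the corresponding linear forms without justification. Those points are the monomials of degree $\ell$, or the single points $a=(x^i)^\ell$. Your argument needs no such step.
\item \emph{Generality.} Your identity holds for every polynomial $a$, so Theorem~\ref{thm:main-1-existence} is immediate.
\item \emph{Extension.} Suppose $q$ and $p$ replace the standard arguments in slots $1$ and $j$. Then $M_0^{-1}M'$ differs from the identity only in those two columns. So the bracket is the $2\times 2$ determinant of $L_1,L_j$ applied to $q,p$, where $L_j$ denotes the operator above. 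On monomials, $L_jq=(-1)^{k-r_j}\binom{\deg q-r_j-1}{k-r_j}\,\partial^{|\brvec_j|}q/\partial\bx^{\brvec_j}$. Expanding the $2\times2$ determinant gives the Main formula \eqref{eq:golden-formula} for $j=2$ in a few lines. It also gives the computer-generated $\widehat{xx}$ formulas of Example~\ref{ex:comp-d2-k2345-prepl-x2} and the conjectured form \eqref{eq:xR-repl-gold-curly-between-pq}.
\end{enumerate}
What the paper's approach offers in return is a reading of each linear factor as a coincidence of arguments. That is the viewpoint behind its questions about vanishing mechanisms.

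Your sanity check is right. With $x^2/2!$ the bracket in Example~\ref{ex:examples-of-findim-d=k=2} equals $2x$; the $4x$ there comes from the unnormalised argument $x^2$. There is also no truncation issue, because the shift preserves $\Bbbk_k[\bx]$ exactly.
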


\begin{rem}\label{rem:structure-constant-as-differential operator}
    With the same notation as in Theorem~\ref{thm:structure-loneliness} above, we rewrite the structure constant in terms of a differential. Namely, by Eq.~\eqref{eq:differential-operator-acting-on-monomial} on p.~\pageref{eq:differential-operator-acting-on-monomial} the first product in \eqref{eq:structure-equation} equals the coefficient of the differential operator $\partial^{|\brvec_j|} / \partial \bx^{\brvec_j}$ acting on $a(\bx)$, that is
    \begin{equation}\label{eq:structure-equation-as-differential}
        W_d^k\Bigl(1,\frac{\bx^{\brvec_2}}{\brvec_2!},...,\underbrace{a(\bx)}_{\widehat{\bx^{\brvec_j}/\brvec_j!}},...,\frac{\bx^{\brvec_N}}{\brvec_N!}\Bigr) = \frac{(-)^{k-r_j}}{(k-r_j)!} \prod_{\ell=r_j+1}^k (\deg a-\ell)\cdot \frac{\partial^{|\brvec_j|}}{\partial \bx^{\brvec_j}} a(\bx),
    \end{equation}
    i.e.\ the missing term differentiates the term that replaced it.
\end{rem}

\begin{rem}
    This structure theorem permits easy computation of the Wronskian bracket for any arguments in a lonely polynomial algebra $\EuA$.
\end{rem}

\begin{proof}[Proof (of Theorem~\ref{thm:structure-loneliness})]
    By Theorem~\ref{thm:general-vandermonde} about complete generalised Vandermonde determinants on p.~\pageref{thm:general-vandermonde} we immediately obtain the monomial term $a(\bx)/\bx^{\brvec_j}$; recall that whenever $\bx^{\brvec_j}$ does \emph{not} divide the monomial $a(\bx)$, the overall constant equals zero, hence the division by higher degrees (which would result in negative degrees) is fictitious (see Eq.~\eqref{eq:differential-operator-acting-on-monomial} on p.~\pageref{eq:differential-operator-acting-on-monomial} and the surrounding commentary). It remains to work out that constant -- by simplifying the generalised Vandermonde determinant. Note that the determinant contains $\bn=(n_1,...,n_d)$ values only in one column; their products go up to at most total degree~$k$ in them. We view the determinant as a polynomial $g \in \Bbbk[n_1,...,n_d]$; the polynomial can have at most $k$ factors linear in $n_i$'s (if linear factors exist). 

    Split this constant into a part that depends on $\bn$ multiplied by an overall constant independent of the degrees of coordinates, $\bn$, of $a(\bx)$:
    \begin{equation}\label{eq:n-indep-part-n-dep-part}
        W_d^k\Bigl(1,\frac{\bx^{\brvec_2}}{\brvec_2!},...,\underbrace{a(\bx)}_{\widehat{\bx^{\brvec_j}/\brvec_j!}},...,\frac{\bx^{\brvec_N}}{\brvec_N!}\Bigr) = (\text{const. indep. of } \bn)\cdot (\text{number dep. on } \bn)  \cdot  \frac{a(\bx)}{\bx^{\brvec_j}}.
    \end{equation}
    We work out the $\bn$-dependent part first.
    
    Note that taking $a(\bx)\sim \bx^{\brvec_{j'}}$ for some $j'\neq j$ we will obtain the zero polynomial. Indeed, for any choice of monomial $a$ of degree $r_j<\deg(a)\leq k$, the monomial $a$ must hit another monomial, as the set of arguments is complete up to degree $k$. This gives $k-r_j$ factors of the form $(\deg a - \ell)$ for $\ell$ values $\ell \in \{r_j+1,..., k\}$, hence their product $\prod_{\ell=r_j+1}^k (\deg p-\ell)$ divides the monomial $g$; we let $f$ be the quotient obtained by division.

    As $\deg_{\bn = (n_1,...,n_d)} g =k$ and the product above is of degree $k-r_j$, we find that $\deg_\bn f= r_j$. Let $i\in\{1,..,d\}$ be arbitrary. Then for the choices $1,x^i,(x^i)^2,...,(x^i)^{r^i_j}$ of $a(\bx)$ we obtain a repeated argument. This gives the linear factors $n_i-\ell$ for $ \ell \in\{0,..., r_j^i\}$. We thus obtain the first product in~\eqref{eq:structure-equation}. Inspecting the degree we find $r_j^i+\cdots +r_j^d=r_j$, therefore we have found all the linear ($\bn$-dependent) factors (hence the $\bn$-dependent part in Eq.~\eqref{eq:n-indep-part-n-dep-part}).

    All that is left is to determine the overall constant (independent of $\bn$). For this we use that the choice $p(\bx)=\bx^{\brvec_j}$ gives $W_d^k(...,p(\bx)=\bx^{\brvec_j},...)=\brvec_j!$. The first product in \eqref{eq:structure-equation} yields $\brvec_j!$, whereas the second yields $(-)^{k-r_j}(k-r_j)!$. This completes the proof.
\end{proof} 

We now state our first main theorem (recall the special case in Lemma~\ref{lem:main-1-monom}).

\begin{theor}[main theorem~\textbf{1}: lonely algebras are finite-dimensional]\label{thm:main-1-existence}
    Fix any dimension $d\geqslant 1$ and any order $k\geqslant 1$; set $N={d+k\choose k}$. Let the polynomial subspace $\EuA$ be $\EuA=\Bbbk_k[\bx]\oplus \langle p\rangle$, where $p$ is any homogeneous polynomial of total degree $\deg(p)=k+1$. Then the subspace $\EuA$ is \underline{\emph{closed}} under the bracket given by $W_d^k=\Bone\wedge \partial_\bx \wedge\cdots\wedge \partial^k_{\bx\hdots \bx}$. In conclusion, $\EuA$ is a \underline{\emph{finite-dimensional}} $N$\nobreakdash-ary SH-Lie subalgebra of $\Bbbk[\bx]$ with the complete generalised Wronskian determinant as the $N$\nobreakdash-ary bracket.
\end{theor}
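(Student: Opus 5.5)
By multilinearity and antisymmetry of $W_d^k$, it suffices to check closure on basis arguments. We take the basis of $\EuA$ to be the standard monomials $\bx^{\brvec_1}/\brvec_1!,\ldots,\bx^{\brvec_N}/\brvec_N!$ of $\Bbbk_k[\bx]$ together with $p$. Since $\dim\EuA=N+1$, any $N$ pairwise distinct basis elements either omit $p$ or omit exactly one standard monomial $\bx^{\brvec_j}/\brvec_j!$. Repeated arguments give zero. In the first case the bracket equals $\pm1\in\Bbbk\subseteq\EuA$ by Remark~\ref{rem:degree-shift-equals-dividedpower}. So the only real work is the second case,
\[
W_d^k\Bigl(1,\ldots,\underbrace{p}_{\widehat{\bx^{\brvec_j}/\brvec_j!}},\ldots,\frac{\bx^{\brvec_N}}{\brvec_N!}\Bigr).
\]

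For this case, I will write $p=\sum_\alpha c_\alpha a_\alpha(\bx)$ as a combination of monomials $a_\alpha$, each of total degree $k+1$. By multilinearity, the bracket is $\sum_\alpha c_\alpha$ times the same bracket with $a_\alpha$ in slot $j$. Theorem~\ref{thm:structure-loneliness}, in the form of Remark~\ref{rem:structure-constant-as-differential operator}, evaluates each summand as
\[
\frac{(-)^{k-r_j}}{(k-r_j)!}\prod_{\ell=r_j+1}^k(\deg a_\alpha-\ell)\cdot\frac{\partial^{|\brvec_j|}}{\partial\bx^{\brvec_j}}a_\alpha .
\]
The key observation is that the prefactor depends on $a_\alpha$ only through $\deg a_\alpha=k+1$, which is the same for every $\alpha$ because $p$ is homogeneous. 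It therefore factors out of the sum, and the bracket becomes
\[
C_j\,\frac{\partial^{|\brvec_j|}p}{\partial\bx^{\brvec_j}},\qquad C_j=\frac{(-)^{k-r_j}}{(k-r_j)!}\prod_{\ell=r_j+1}^k(k+1-\ell)=(-)^{k-r_j}.
\]

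It remains to see that this lies in $\EuA$. If $j=1$, then $r_1=0$ and the result is $\pm p\in\EuA$. If $j\geqslant2$, then $|\brvec_j|\geqslant1$, so the derivative is homogeneous of degree $k+1-r_j\leqslant k$ and therefore lies in $\Bbbk_k[\bx]\subseteq\EuA$. Hence every bracket of basis elements lies in $\EuA$, so $\EuA$ is closed under $W_d^k$. It is finite-dimensional, with $\dim\EuA=N+1$, and the SH-Lie identities are inherited from $\Bbbk[\bx]$.

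The only step needing care is the factoring of the coefficient out of the sum over monomials of $p$. This is where homogeneity is essential: without it, the factors $\deg a_\alpha-\ell$ would differ between monomials, and no single derivative formula would hold. I will also check that the convention ``first product is zero when $\bx^{\brvec_j}\nmid a_\alpha$'' agrees with the derivative vanishing, so the differential form remains valid term by term.
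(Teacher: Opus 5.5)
Your proposal is correct and follows the paper's proof: you decompose $p$ into its degree-$(k+1)$ monomials, apply Theorem~\ref{thm:structure-loneliness}, and use homogeneity to see that the prefactor is the same for every constituent monomial. The only difference is that you compute $C_j=(-)^{k-r_j}$ for every $j$. The paper needs only $j=1$, where the constant is $(-)^k$: for $j\geqslant 2$ the result has degree $k+1-r_j\leqslant k$, so it lies in $\Bbbk_k[\bx]$ whatever the coefficients are.
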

\begin{proof}
    By Theorems~\ref{thm:general-vandermonde} and~\ref{thm:structure-loneliness}, it remains only to consider the bracket value $W_d^k(p(\bx),\frac{\bx^{\brvec_2}}{\brvec_2!},$ $...,$ $\frac{\bx^{\brvec_N}}{\brvec_N!})$. Split $p(\bx)$ into its constituent sum of monomials in $\Bbbk_{k+1}[\bx]$ (not necessarily belonging to $\EuA$); by multilinearity we consider each term separately. By Theorem~\ref{def:gen-vandermonde} (or~\ref{thm:structure-loneliness}) the resulting monomial from each monomial argument is proportional to the monomial itself; it remains to show that each resulting monomial is preceded by the same overall constant. From formula~\eqref{eq:structure-equation} with $\brvec_1=(0,...,0)$, so $r_1=0$, and as the degree of each constituent monomial in $p(\bx)$ is equal to $k+1$, we have that each monomial is preceded by the factor $(-)^k/k!\cdot \prod\left\{\prod_\varnothing\right\} \cdot (k+1-1)(k+1-2)\cdots (k+1-k)=(-)^k$, which is independent of the monomial. Hence the resulting polynomial is proportional to $p(\bx)$ itself, as required.
\end{proof}

\subsection{(Im)perfect $N$\nobreakdash-ary SH-Lie polynomial algebras}\label{subSecImperfect}
Any (semi-)simple Lie algebra is perfect. The definition of perfect Lie algebras extends to the $N$\nobreakdash-ary case (see Definition~\ref{def:perfect-lie-algebra} on p.~\pageref{def:perfect-lie-algebra}) -- the (span of the) $N$\nobreakdash-ary bracket is surjective: $\Span_{\Bbbk} W_d^k(\EuA,...,\EuA)=\EuA$. In this section we characterise which of the finite dimensional $N$\nobreakdash-ary SH-Lie algebras found in Lemma~\ref{lem:main-1-monom} on p.~\pageref{lem:main-1-monom} and the first main Theorem~\ref{thm:main-1-existence} are perfect: we show that over arbitrary base dimension $d\geqslant 1$ and differential order $k\geqslant 1$, modulo a list of exceptions, a \emph{monomial} as the top-degree polynomial (that is, a monomial $a(\bx)$ of degree $\deg(p)=k+1$ in the lonely algebra $\EuA=\Bbbk_k{\bx}\oplus \langle a(\bx)\rangle$) is \emph{not} enough for the algebra to be perfect. We give an example of a perfect finite-dimensional algebra with an homogeneous polynomial on top. Determining which homogeneous polynomial as the top-degree polynomial results in a perfect (lonely) $N$\nobreakdash-ary SH-Lie polynomial algebra remains an open problem.

\begin{prop}[monomial imperfection]\label{prop:imperfection}
    Let $\EuA$ be a consistent lonely $N$\nobreakdash-ary SH-Lie polynomial subalgebra, i.e.\ $\EuA=\Bbbk_k[\bx]\oplus \langle a(\bx)\rangle$, where $a(\bx)=(x^1)^{n_1}\cdots (x^d)^{n_d}$ is a \emph{monomial}\,\footnote{This assumption is essential. In Example~\ref{ex:perfection} homogeneous polynomials will alleviate this imperfection.} of degree $\deg (a)=n_1+\cdots+n_d=k+1$. Then the algebra $\EuA$ is \emph{not perfect}, i.e.\ the image of the bracket (given by the complete generalised Wronskian of differential order $k$ over base dimension $d$) does not span the algebra, $\Span_\Bbbk W_d^k(\EuA,...,\EuA)\neq\EuA$, unless either of the following cases is satisfied:
    \begin{align*}
        \text{(\textit{i}) }& d=1 \text{ (and any $k\geqslant 1$ value)}, & &\text{or} &
        \text{(\textit{ii}) }& d=2 \text{ and } k=1;
    \end{align*}
    i.e.\ every lonely (thus finite-dimensional) algebra in dimension one is perfect, whereas over multi-dimension there exists only one example of a perfect algebra with a monomial on top.
\end{prop}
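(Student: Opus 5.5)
The plan is to compute the span of the image of the bracket on $\EuA=\Bbbk_k[\bx]\oplus\langle a(\bx)\rangle$ explicitly, and then compare its dimension with $\dim\EuA=N+1$. By multilinearity and antisymmetry, every nonzero bracket uses $N$ pairwise distinct basis monomials taken from the $N+1$ monomials $\bx^{\brvec_1}/\brvec_1!,\ldots,\bx^{\brvec_N}/\brvec_N!$ and $a$. So the only brackets to consider are $W_d^k$ of the full standard set, which equals $1$ by Remark~\ref{rem:degree-shift-equals-dividedpower}, and the brackets in which $a$ replaces the $j$th standard monomial. Theorem~\ref{thm:structure-loneliness} (in the form of Remark~\ref{rem:structure-constant-as-differential operator}) evaluates the latter:
\begin{equation*}
\frac{(-)^{k-r_j}}{(k-r_j)!}\prod_{\ell=r_j+1}^k(k+1-\ell)\cdot\frac{\partial^{|\brvec_j|}a}{\partial\bx^{\brvec_j}}=(-)^{k-r_j}\,\frac{\partial^{|\brvec_j|}a}{\partial\bx^{\brvec_j}}.
\end{equation*}
The prefactor never vanishes because $\deg a=k+1>k$. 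Hence
\begin{equation*}
\Span_\Bbbk W_d^k(\EuA,\ldots,\EuA)=\Span_\Bbbk\bigl\{1\bigr\}\cup\Bigl\{\tfrac{\partial^{|\brvec|}a}{\partial\bx^{\brvec}}:\ |\brvec|\leqslant k\Bigr\}.
\end{equation*}
Perfectness is therefore equivalent to the condition that the derivatives of $a$ of orders $0,\ldots,k$, together with $1$, span all of $\Bbbk_k[\bx]\oplus\langle a\rangle$. All of these derivatives are monomials dividing $a$. The order-$0$ derivative gives $a$, and the order-$k$ derivatives give degree-one monomials.

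The next step is a counting argument. Distinct derivatives $\partial^{\brvec}a$ are proportional to distinct divisors $a/\bx^{\brvec}$, so the span is spanned by monomial divisors of $a$ together with $1$. A necessary condition for perfectness is that every monomial of degree $\leqslant k$ divides $a$. Taking the degree-one monomials shows that every $x^i$ must divide $a$, so $n_i\geqslant 1$ for all $i$. If $k\geqslant 2$, taking the pure powers $(x^i)^k$ shows that each $n_i\geqslant k$. Then $k+1=\sum n_i\geqslant dk$, which forces $d=1$ when $k\geqslant 2$. If $k=1$, the condition $n_i\geqslant1$ for all $i$ gives $2=\sum n_i\geqslant d$, so $d\leqslant 2$. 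This leaves exactly the cases $d=1$ and $(d,k)=(2,1)$; in all other cases some standard monomial is missing from the image, and $\EuA$ is not perfect.

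Finally, we verify sufficiency in the exceptional cases. For $d=1$ we have $a=x^{k+1}$, and its derivatives give every $x^m$ with $1\leqslant m\leqslant k+1$ up to nonzero factors; together with $1$ this spans $\EuA$, in agreement with Example~\ref{ex:one-dim-finiteddim}. For $d=2,k=1$ the only admissible monomial is $a=xy$, and $\EuA(xy)$ is perfect by the explicit table in Example~\ref{ex:dimension-two-closed-old}. The main obstacle is making the reduction to monomial arguments airtight: that a general element of the image is a linear combination of brackets of basis monomials, and that the nonvanishing of the structure constants is read off correctly from Theorem~\ref{thm:structure-loneliness}. Both follow from multilinearity and the explicit formula above. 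The only real content left is the elementary divisibility count.
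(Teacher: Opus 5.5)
Your proof is correct and follows essentially the paper's route: both arguments rest on the observation that each pure power $(x^i)^k$ must lie in the image. Since the image is spanned by $1$ and monomials dividing $a$, this forces $(x^i)^k\mid a$ for every $i$, hence $dk\leqslant k+1$. Your version is more careful than the paper's in two ways. First, you identify the image exactly as $1$ together with the derivatives $\partial^{|\brvec|}a/\partial\bx^{\brvec}$, using Theorem~\ref{thm:structure-loneliness}. Second, you check sufficiency explicitly in the exceptional cases, including that only $a=xy$ works for $(d,k)=(2,1)$, whereas the paper leaves sufficiency to the earlier examples.
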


Note that the exception (\textit{i}) corresponds to the class of perfect finite-dimensional algebras found in Example~\ref{ex:removed-factorial}, and (\textit{ii}) to Example~\ref{ex:dimension-two-closed-old} on p.~\pageref{ex:dimension-two-closed-old}.

\begin{rem}
    Over dimension $d=1$ with coordinate $x$ there is only one consistent finite-dimensional $N$\nobreakdash-ary polynomial SH-Lie algebra $\EuA=\langle 1,x/1!,x^2/2!,...,x^N/N!\rangle$ with the ordinary Wronskian determinant $W_{d=1}^{0,1,...,N-1}$ bracket of differential order $N-1$ and $N$ arguments; this algebra is perfect, $\Span_\Bbbk W_{d=1}^{0,1,...,N-1}(\EuA,...,\EuA)=\EuA$, and, according to the given Definition~\ref{def:consistent-lonely-chubby-tall-lanky}, lonely.
\end{rem}

\begin{proof}[Proof (of Proposition~\ref{prop:imperfection})]
    By Theorem~\ref{thm:general-vandermonde}, taking brackets of monomial arguments yields monomials; by multilinearity it then suffices to consider the span of brackets of monomial arguments. Supposing the algebra $\EuA$ is perfect, we must have the monomial $(x^i)^k$ in the image of the Wronskian determinant. By Theorem~\ref{thm:general-vandermonde}, the only way to obtain this monomial is through the choice of argument $a(\bx)$ such that $(x^i)^k=a(\bx)/x^\ell$ for some $\ell\in \{1,..., d\}$. Then $(x^i)^k | a(\bx)$ for every $i\in \{1,..., d\}$, which implies that the least common multiple $(x^1)^k\cdots (x^d)^k$  divides $p(\bx)$. It follows that $kd\leq k+1$, or, equivalently, $ k(d-1)\leq 1$. The only solutions of $k,d\in \BBN_{\geqslant 1}$ that satisfy the requirement are (\textit{i}) and (\textit{ii}). 
\end{proof}

\begin{rem}[shrinking of the algebra by iterating the bracket\footnote{The authors thank Maxim Kontsevich for this question, asked during the talk by the second author at the Mathematics Seminar, IH\'ES. In this remark we develop the question more precisely and answer it.}]\label{rem:shrinking-of-algebra}
    Let the base dimension be $d\geqslant 2$ and put the Wronskian $W_d^k$ as the $N$\nobreakdash-ary bracket.
    Take a \emph{monomial} $a(\bx)$ as the $(k+1)$th degree polynomial in a lonely algebra: $\EuA=\Bbbk_k[\bx]\oplus\langle a(\bx)\rangle$, hence $\dim_\Bbbk \EuA=N+1$. Suppose the SH-Lie algebra $\EuA$ is \emph{not perfect}. Upon applying the bracket, we will lose some monomials as arguments:
    \begin{align*}
        \Span_\Bbbk W_d^k (\EuA,...,\EuA) &\subsetneq \EuA,&  &\text{hence} & 
        \dim \Span_\Bbbk W_d^k (\EuA,...,\EuA) &\leqslant N.
    \end{align*}
    If at least two monomials are lost, we no longer have enough linearly-independent arguments to obtain a non-vanishing bracket. Supposing only one monomial argument is lost in the span, we now have only one set of $N$ linearly-independent arguments, hence re-applying the bracket yields only one monomial up to constant (possibly zero).

    Therefore only one or two iterated applications of the brackets are possible before the bracket of any arguments will vanish identically: the dimension decreases as
    \begin{align}\label{eq:descreasing-dimension}
        N+1\to N&\to 1\to 0& &\text{or} & N+1\to N&\to 0, & &\text{or as}& N+1\to &(<N) \to 0.
    \end{align}
    The problem is to determine for which choices of $a(\bx)$, which case in Eq.~\eqref{eq:descreasing-dimension} is obtained?

    Let us now answer this question by considering when the first two cases in~\eqref{eq:descreasing-dimension} can be satisfied.
    Suppose the degree of $x^i$ in $a(\bx)$ is less than $k$; it is then not possible to obtain the monomial $(x^i)^k$ as the result of the bracket. In the next iteration, any choice of $N$ independent monomials from $\Span_\Bbbk W_d^k(\EuA,...,\EuA)$ as arguments will then be deficient in $x^i$ (see Definition~\ref{def:defincient-exact-abundant-promising} on p.~\pageref{def:defincient-exact-abundant-promising} and Proposition~\ref{prop:sufficient-vandermonde-zero}\,\ref{prop8-iii:prop-sufficient-vandermonde:deficiency} on p.~\pageref{prop:sufficient-vandermonde-zero}), hence all brackets will vanish. The first case is thus \emph{impossible}.
    If in the first iteration only one monomial is lost, it must be $(x^i)^k$, as losing any other immediately implies that more are lost (increase the degree of any coordinate while decreasing another). The dimension must be $d=2$, as otherwise more than one monomial of the form $(x^i)^k$ will be lost; the differential order cannot exceed $k=2$ as for $k\geqslant 3$, setting $a(\bx)=x^ky$, we lose $y^2,...,y^k$.

    Therefore the cases in~\eqref{eq:descreasing-dimension} are satisfied only by:
    \begin{align}
        N+1&\to N\to 1\to 0 & &\text{impossible},\\
        N+1&\to N\to 0 & &(d=2=k)\colon\quad a(\bx)=x^2 y \text{ or } xy^2,\text{ and}\notag{}\\
           &           & &(d=2, k=1)\colon\quad a(\bx)=x^2 \text{ or } y^2,\label{eq:middle-case=N}\\
        N+1&\to (<N) \to 0 &&\text{every other choice of } a(\bx).\label{eq:decreasing-case-<N}
    \end{align}
    This solves the problem of decreasing imperfect algebras \emph{of monomials} by iterated brackets. Among the monomials in case~\eqref{eq:decreasing-case-<N}, it is a problem of combinators to determine to what value the dimension shrinks in each case: in each base variable the resulting monomial has (non-negative) degree not exceeding that of $a(\bx)=(x^1)^{n_1}\cdots (x^d)^{n_d}$; the number of such monomials is $\prod_{i=1}^d (n_i + 1)$. Case~\eqref{eq:middle-case=N} corresponds precisely to when this product equals $N$.
\end{rem}

\begin{ex}[perfection!]\label{ex:perfection}
    In Proposition~\ref{prop:imperfection} the assumption that $a(\bx)$ is a monomial was \emph{essential}.
    Let the base dimension be $d=3$ and the differential order be $k=1$. By Proposition~\ref{prop:imperfection} then, a 2nd degree monomial $a(\bx)$ as the top-degree polynomial is not enough to form a perfect algebra $\EuA=\langle1,x,y,z,a(\bx)\rangle$. We now illustrate that an homogeneous polynomial of degree 2 can give a perfect algebra: put $p(\bx)=xy+yz+zx$, which gives for $\EuA=\langle 1,x,y,z,xy+yz+zx\rangle$ the commutation relations $[x,y,z,p(\bx)]=p(\bx)$,
    \begin{align}\label{eq:commut-rels-perfect-algebra}
        [1,x,y,p(\bx)]&=x+y, & [1,x,z,p(\bx)]&=-x-z, &&\text{and} & [1,y,z,p(\bx)]&=y+z,
    \end{align}
    from which $\langle W_{d=3}^{k=1}(\EuA,\EuA,\EuA,\EuA)\rangle=\EuA$ follows as $x=((x+y)+(-x-y)+(y+z))/2$ using which we also obtain the remaining monomials $y$ and $z$.

    Notice that each r.\!-h.s.\ in \eqref{eq:commut-rels-perfect-algebra} contains exactly two monomial terms; the coordinate missing in the r.\!-h.s.\ was missing from the set of arguments in the l.\!-h.s. (cf.\ Proposition~\ref{prop:sufficient-vandermonde-zero} on p.~\pageref{prop:sufficient-vandermonde-zero}, condition \ref{prop8-iii:prop-sufficient-vandermonde:deficiency} on deficiency). For instance, the degree in $z$ among $1,x,y,xy$ sum up to $0<kN/(d+1)=1$. We will return to this point in the discussion (see Definition~\ref{def:defincient-exact-abundant-promising} on p.~\pageref{def:defincient-exact-abundant-promising}).
\end{ex}

\begin{open}\label{open:perfect-algebras}
    Let the base dimension $d\geqslant 2$ and differential order $k\geqslant 1$ be arbitrary. Put the complete generalised Wronskian $W_d^k=\Bone\wedge \partial_\bx\wedges \partial^k_{\bx...\bx}$ as the $N$\nobreakdash-ary bracket, $N={d+k\choose d}$, on the $N$\nobreakdash-ary SH-Lie polynomial algebra $\Bbbk[x^1,...,x^d]$. Does there exist an homogeneous polynomial $p(\bx)\in \Bbbk[\bx]$ of degree $k+1$ such that the $N$\nobreakdash-ary SH-Lie polynomial subalgebra $\EuA=\Bbbk_k[\bx]\oplus\langle p(\bx)\rangle$ is \emph{perfect} w.r.t\ the Wronskian, i.e.\ $\Span_\Bbbk W_d^k(\EuA,...,\EuA)=\EuA$?
\end{open}

\begin{comm}[to Open problem~\ref{open:perfect-algebras}]
    Fix $d\geqslant 2$ and $k\geqslant 1$ as in Problem~\ref{open:perfect-algebras}. Build the homogeneous polynomial $p(\bx)$ from $(k+1)$th degree monomials; there are a total of
    \begin{equation}\label{eq:number-monoms-top-degree}
        {d+(k+1)-1\choose k+1} = {d+k\choose k+1} = \frac{d}{k+1} N
    \end{equation}
    monomials of total degree $k+1$ (cf.\ Lemma~\ref{lem:combinatorial} on p.~\pageref{lem:combinatorial}, where the fraction in \eqref{eq:number-monoms-top-degree} is inverted). The dimension of the algebra $\EuA$ is $N+1$, where $N$ monomials come from $\Bbbk_k[\bx]$, of which one is the constant monomial; the polynomial $p(\bx)$ is always in the image (see Theorem~\ref{thm:structure-loneliness}). As we demanded that the algebra $\EuA$ is consistent, the unit constant polynomial is also in the image. Hence for the algebra to be perfect, all non-constant monomials up to degree $k$ must be in the span of the (homogeneous\footnote{The total degree of $q_j(\bx)$ is $\deg q_j(\bx)=k+1-r_j$, where $r_j$ is the total degree of the replaced monomial as argument.}) $N-1$ polynomials $q_2(\bx),...,q_N(\bx)$ obtained by
    \begin{equation}
        q_j(\bx)=W_d^k(1,\bx^{\brvec_2}/\brvec_2!,...,\widehat{\bx^{\brvec_j}/\brvec_j!}\mapsto p(\bx),...,\bx^{\brvec_N}/\brvec_N!);
    \end{equation}
    (see Theorem~\ref{thm:structure-loneliness} and Remark~\ref{rem:structure-constant-as-differential operator} on p.~\pageref{thm:structure-loneliness} for the computation and Notations~\ref{not:multi-index} and~\ref{not:jth-row-wronskian} on p.~\pageref{not:jth-row-wronskian} for the power $d$-tuple notation). The problem is thus two-fold:
    \begin{enumerate}[label=({\it\roman*})]
        \item\label{item:comment-open-perfect:combinatorial} (combinatorial part) calculate how many and which constituent monomials make up each polynomial $q_j(\bx)$ (depending on $j$, so on $r_j=|\brvec_j|$ and $\brvec_j=(r_j^1,...,r_j^d)$);
        \item\label{item:comment-open-perfect:linear-algebra} (linear algebra part) determine whether the $N-1$ polynomials obtained this way are linearly independent (hence their span is $(N-1)$-dimensional as required).
    \end{enumerate}
    Part~\ref{item:comment-open-perfect:linear-algebra} can be reduced to computing an $(N-1)\times (N-1)$ determinant (of numbers).
\end{comm}

\subsection{Non-existence of other (\emph{consistent}) finite-dimensional SH-Lie polynomial algebras}\label{subSecNonExistence}
In this section we look beyond the class of \emph{lonely} $N$\nobreakdash-ary strongly homotopy (SH) Lie polynomial algebras $\EuA=\Bbbk_k[\bx]\oplus\langle p(\bx)\rangle$ (where $p(\bx)$ is an homogeneous polynomial of degree $\deg p=k+1$) studied in \S\ref{subSecConstruction}. We recall that every lonely algebra is closed w.r.t\ the complete generalised Wronskian determinant (of differential order $k$ over dimension $d$) $W_d^k=\Bone\wedge \partial_\bx\wedges\partial^k_{\bx...\bx}$, and hence finite-dimensional (see the first main Theorem~\ref{thm:main-1-existence} on p.~\pageref{thm:main-1-existence} and Definition~\ref{def:consistent-lonely-chubby-tall-lanky} on p.~\pageref{def:consistent-lonely-chubby-tall-lanky} for the relevant definitions). We now show that a consistent (i.e.\ $\Bbbk_k[\bx]\subsetneq \EuA\subseteq \Bbbk[\bx]$) algebra $\EuA$ is finite-dimensional if and \emph{only if} $\EuA$ contains only a single homogeneous polynomial of degree $k+1$ (i.e\ $\EuA$ is lonely) and no other polynomial(s) of higher degree(s). In the end of this section we discuss whether we can drop the assumption that the algebra $\EuA$ is consistent (or, equivalently, by Lemma~\ref{lem:consistency} that the constant unit polynomial $1$ is both in the algebra $1\in \EuA$ and the image of the bracket: $1\in \Span_\Bbbk W_d^k(\EuA,...,\EuA)$); we formulate an open problem about this.

\medskip
Essentially, our goal is to produce monomials of higher degree from a collection of (poly- or) monomials in a consistent algebra using the complete generalised Wronskian determinant. To this end, let us consider algebras which are not precisely lonely; specifically, let us consider consistent algebras which contain a polynomial of too high a degree ($>k+1$) or too many (at least two) linearly independent polynomials of degree $k+1$; see Figure~\ref{fig:visaliation-of-algebras} on p.~\pageref{fig:visaliation-of-algebras} for an illustration.

Recall that Lemma~\ref{lem:chubby-tall-lanky} states that a tall algebra reduces to a chubby or a lanky algebra (see Definition~\ref{def:consistent-lonely-chubby-tall-lanky} on p.~\pageref{def:consistent-lonely-chubby-tall-lanky}) depending on the choice of top-degree polynomial $p(\bx)$. It suffices, by multilinearity of the Wronskian determinant, to let $p(\bx)$ be a monomial, $p(\bx)=a(\bx)$.\footnote{\label{foot:consideration-of-pure-monomials}
    This is because, for our purposes, it will be enough to consider $N-1$ or $N-2$ monomials from $\Bbbk_k[\bx]$ as arguments with the only remaining one or two arguments being polynomials: denote by and decompose the two polynomial arguments $p(\bx)$ and $q(\bx)$ into their constituent monomials. Now calculate the Wronskian of $N-2$ monomial arguments and the remaining two arguments being a pair of constituent monomials from $p$ and $q$, respectively. By Theorem~\ref{thm:general-vandermonde} in this case, the same monomial can be produced with the opposite sign (thus it would cancel in the expansion by multilinearity) only if the two constituent monomials are swapped as arguments, as shown below.
    
    Formally, decompose the two polynomials $p,q$ into their constituent monomials: $p(\bx)=c_1 a(\bx) + c_2 b(\bx)$ and $q(\bx)=d_1 a(\bx)+d_2 b(\bx)$, where all four constants are non-zero. Expand the Wronskian of arguments $p,q$ and $N-2$ monomials from $\Bbbk_k[\bx]$ by multilinearity. Denote by $(a,b)$ the monomial obtained from the choice of arguments $a,b$ and the same $N-2$ monomials. Then the result of the expanded Wronskian is
    \begin{equation*}
        c_1 d_1 (a,b) + c_2 d_1 (b,a) = (c_1d_2 - c_2d_1) (a,b).
    \end{equation*}
    If the expansion cancels, we obtain $c_1d_2=c_2d_1$, from which it quickly follows that $p$ and $q$ are not linearly independent.
    As $p(\bx)$ and $q(\bx)$ are, by assumption, linearly independent, they must contain at least one pair of overall non-proportional monomial arguments. Then it suffices to consider as arguments of the Wronskian a pair of constituent monomials and $N-2$ monomials of degree at most $k$, of which the Wronskian does not vanish identically after expansion; put these monomials as the new $p$ and $q$ themselves.
}

\medskip
We now \emph{re}-state and prove Lemma~\ref{lem:chubby-tall-lanky} (first stated on p.~\pageref{lem:chubby-tall-lanky}).

\begin{manuallemma}{\ref{lem:chubby-tall-lanky}}
    Suppose $\EuA$ is a tall polynomial SH-Lie algebra, i.e.\ $\EuA$ contains a monomial%
    \footnote{More generally, for a polynomial $p(\bx)$ in place of $a(\bx)$, extend the results of the lemma by decomposing $p(\bx)$ into its constituent monomials and applying the lemma to each monomial.}  
    $a(\bx)$ of degree $\deg(a)>k+1$, and suppose $\EuA$ is consistent: now $\EuA \supseteq \Bbbk_k[\bx]\oplus \langle a(\bx)\rangle$. Depending on $a(\bx)$ we have the following:
    \begin{enumerate}[label=({\it\roman*})]
        \item if $a(\bx)=(x^i)^{k+\ell}$ for some $i$ with $\ell\geq 2$, then $(x^i)^{k+\ell-1},...,(x^i)^{k+1}\in \EuA$,\hfill (\emph{lanky
        $\EuA$})
        \item otherwise (i.e.\ $x^ix^j|a(\bx)$ for some $j\neq i$) the algebra $\EuA$ is chubby.\hfill (\emph{chubby $\EuA$})
    \end{enumerate}
\end{manuallemma}

\begin{proof}[Proof of Lemma~\ref{lem:chubby-tall-lanky}]\label{proof:lemma-chubby-tall-lanky}
    (\textit{i}) As $\ell>1$, then by the structure Theorem~\ref{thm:structure-loneliness} on p.~\pageref{thm:structure-loneliness}, taking the bracket with $x^i$ as argument replaced by $a(\bx)$ yields the monomial $(x^i)^{k+\ell-1}$ of preceding degree times a non-zero constant. Repeating this descent $\ell\mapsto \ell-1$ until $(x^i)^{k+1}$ is obtained verifies the claim.

    (\textit{ii}) Similarly to (\textit{i}), but now there are at least two different ways to divide (at least by either $x^i$ or $x^j$), obtaining non-proportional monomials.
\end{proof}

Unlike we had only one monomial $a(\bx)$ in the structure Theorem~\ref{thm:structure-loneliness}, let us now replace two standard monomials in the set of arguments for the Wronskian $W_{d\geqslant 1}^{k\geqslant 1}$. Specifically, let us replace $1=\bx^{\brvec_1}$ and $x^1=\bx^{\brvec_2}$ by \emph{monomials} $q(\bx)$ and $p(\bx)$, respectively. The total degree of the resulting monomial from the Wronskian of the above arguments is at least $2k+1>k+1$ if the coefficient in front is non-zero. The task is to factorise the complete generalised Vandermonde determinant; we do this in Theorem~\ref{thm:golden-formula} on p.~\pageref{thm:golden-formula}. The examples below suggest the general form, which we prove:

\begin{ex}[$k=1$, $d=2$, $\widehat{x}\mapsto p$]\label{ex:k1d2-curly-bracket-formula}
     Let the base dimension be $d=2$ and consider the complete generalised Wronskian of differential order $k=1$: $W_{d=2}^{k=1}=\Bone\wedge \partial/\partial x\wedge \partial/\partial y$. Let $p(\bx)=x^{n_1}y^{n_2}$ and $q(\bx)=x^{m_1}y^{m_2}$ be arbitrary monomials. Then,
     \begin{align}
             W_{d=2}^{k=1}(q,p,y)&=\det\begin{vmatrix}
                 1&1&1\\ m_1&n_1&0\\ m_2&n_2&1
             \end{vmatrix} \frac{pq}{x}\notag{}
             = \{(n_2-1)m_1 - (m_2-1)n_1\}\frac{pq}{x}\\
             &= \{(n_1+n_2-1)m_1 - (m_1+m_2-1)n_1\}\frac{pq}{x},\notag{}
             \intertext{where we were able to add $n_1$ and $m_1$ in the round brackets as they cancel by skew-symmetry of the coefficient, giving}%
             &= \{(\deg p-1)m_1 - (\deg q -1 )n_1\}\frac{pq}{x}.\label{eq:curly-bracket-example}
     \end{align}
     Notice how the coefficient intertwines the degrees of the two monomials $p$ and $q$ by the others' degree in $x$ (i.e.\ $\deg(p)$ mixed with $m_1$), which is the replaced non-constant argument.
\end{ex}

\begin{ex}[$k=1$, any $d\geqslant 2$, $\widehat{x^j}\mapsto p$]\label{ex:arbitrary-replaced-monomial}
    Now let the base dimension $d\geqslant 2$ be arbitrary and set the differential order to be 1; this gives the first order Wronskian $W_{d\geqslant 2}^{k=1}=\Bone\wedge \partial/\partial x^1\wedges \partial/\partial x^d$. Replace $1$ as argument by $q(\bx)=(x^1)^{m_1}\cdots(x^d)^{m_d}$ and $x^j$ by $p=(x^1)^{n_1}\cdots(x^d)^{n_d}$; by direct expansion (see Appendix~\ref{SecAppExpansion} on p.~\pageref{SecAppExpansion}) we obtain
    \begin{equation}
        W_{d\geqslant 2}^{k=1}(1,x^1,...,x^{j-1},p,x^{j+1},...,x^d)=\{(\deg p-1)m_j-(\deg q-1)n_j\}\frac{pq}{x^j}.
    \end{equation}
    Notice (cf.\ Eq.~\ref{eq:differential-operator-acting-on-monomial} on p.~\pageref{eq:differential-operator-acting-on-monomial} and the surrounding discussion) that if $x^j$ does not divide the product $pq$, i.e.\ $n_1=m_1=0$, then the coefficient vanishes identically (so the division by $x^j$ is fictitious).
\end{ex}

\begin{ex}[$d=2$, $k=2,3,4,5$, $\widehat{x}\mapsto p$]\label{ex:comp-examples-golden-formula}
    Fix the base dimension $d=2$ with coordinates $x,y$ and vary the total differential order $k\in\{2,3,4,5\}$ (cf. the case $k=1$ in Example~\ref{ex:k1d2-curly-bracket-formula}).
    We now showcase computational results for the $k$ values above:\footnote{
        To compute the complete generalised Wronskian determinant symbolically, we write and run a script using the Python package SymPy; this Script~\ref{script:Wronskian-pq} is given in Appendix~\ref{SecAppPyScripts}. The computations were performed on the home machine of the first author and on the computational cluster `H\'abr\'ok' of the University of Groningen; see Appendix~\ref{SecAppRawOutputs} for the raw computational outputs.
    }
    \begin{subequations}\label{eqs:comp-gf-highk-k2345}
    \begin{align}
        W_{d=2}^{k=2}(q,p,y,x^2,xy,y^2)&=2\cdot (\deg p-2)(\deg q-2)\cdot \notag{}\\
                          &\qquad\cdot \{(\deg p-1)m_1 - (\deg q-1)n_1\}\frac{pq}{x} \label{eq:comp-gf-highk-k2} \\
        W_{d=2}^{k=3}(q,p,y,x^2,...,xy^2,y^3)&=2^4\,3\cdot \prod\nolimits_{\ell=2,3}(\deg p-\ell)(\deg q-\ell)\cdot \notag{}\\
                          &\qquad\cdot \{(\deg p-1)m_1 - (\deg q-1)n_1\}\frac{pq}{x} \label{eq:comp-gf-highk-k3} \\
        W_{d=2}^{k=4}(q,p,y,x^2,...,xy^3,y^4)&=2^{12}\,3^4\cdot \prod\nolimits_{\ell=2,3,4}(\deg p-\ell)(\deg q-\ell)\cdot \notag{}\\
                          &\qquad\cdot \{(\deg p-1)m_1 - (\deg q-1)n_1\}\frac{pq}{x} \label{eq:comp-gf-highk-k4} \\
        W_{d=2}^{k=5}(q,p,y,x^2,...,xy^4,y^5)&=2^{26}\,3^{10}\,5\cdot \prod\nolimits_{\ell=2,3,4,5}(\deg p-\ell)(\deg q-\ell)\cdot \notag{}\\
        &\qquad\cdot\{(\deg p-1)m_1 - (\deg q-1)n_1\}\frac{pq}{x} \label{eq:comp-gf-highk-k5}
    \end{align}
    \end{subequations}
    We emphasise that we have not normalised the standard monomials above (basically, not using $x^3y^2\mapsto x^3y^2/3!\,2!$). Let us calculate the overall normalisation constant per degree:
    \begin{equation}\label{eq:normalisation-consts-per-degree}
        \text{
        \begin{tabular}{c|c|cl}
            degree & monomials & \multicolumn{2}{c}{product of normalisation constants} \\ 
            2 & $x, y$ & $1!\,1!$&$=1$ \\
            2 & $x^2, xy,y^2$ & $2!\,2!$&$=2^2$ \\
            3 & $x^3,x^2y,xy^2,y^3$ & $3!\,2!\,2!\,3!$&$=2^4\,3^2$ \\
            4 & $x^4,x^3y,x^2y^2, xy^3,y^4$ &$4!\,3!\,2!\,2!\,3!\,4!$&$= 2^{10}\,3^4$ \\
            5 & $x^5,x^4y,x^3y^2,x^2y^3, xy^4,y^5$ &$5!\,4!\,3!\,2!\,2!\,3!\,4!\,5!$&$= 2^{16}\,3^6\,5^2$, \\
        \end{tabular}
        }
    \end{equation}
    Observe that dividing the product of normalisation degree from degree 1 up to $k$ by $k!\,(k-1)!$ the overall coefficients in Eqs~\eqref{eqs:comp-gf-highk-k2345} are obtained (for instance, $2^{2+4+10}\,3^{2+4}/4!\,3!=2^{12}\,3^{4}$ for $k=4$ and $2^{2+4+10+16}\,3^{2+4+6}\,5^2/5!\,4!=2^{26}\,3^{10}\,5$ for $k=5$).

    This results for $k\in\{2,3,4,5\}$ into the formula:
    \begin{multline}
        W_{d=2}^{k\in\{2,3,4,5\} }\Bigl(q,p,\frac{y}{1!},\frac{x^2}{2!},\frac{xy}{1!\,1!},...,\frac{y^k}{k!}\Bigr)=\frac{1}{k!\,(k-1)!}\prod_{\ell=2}^k (\deg p-\ell)(\deg q-\ell)\cdot \\
        \cdot \{(\deg p-1)m_1 - (\deg q-1)n_1\}\frac{pq}{x},
    \end{multline}
    which -- we show in Theorem~\ref{thm:golden-formula} -- is indeed the general form for any $k\geqslant 1$. (The empty product $\prod_{\ell=2}^1$ is interpreted as the unit: $\prod_\varnothing=1$.)
\end{ex}

\begin{theor}[Main formula]\label{thm:golden-formula}
    Take any dimension $d\geqslant 1$ and order $k\geqslant 1$; set $N={d+k\choose k}$. Let $p(\bx),q(\bx)\in \Bbbk_{k+1}[\bx]$ be arbitrary monomials: $p(\bx)=\bx^\bnvec = (x^1)^{n_1}\cdots (x^d)^{n_d}$ and $q=\bx^\bmvec = (x^1)^{m_1}\cdots (x^d)^{m_d}$. Calculating the complete generalised Wronskian (of differential order $k$ over dimension $d$) of all monomials up to degree $k$ as arguments, but with the constant polynomial $1=\bx^{\brvec_1}$ replaced by the monomial $q(\bx)$ and $x^1=\bx^{\brvec_2}$ by $p(\bx)$, gives
    \begin{multline}\label{eq:golden-formula}
        W_d^k\Bigl(\underbrace{q(\bx)}_{\widehat{1}},\underbrace{p(\bx)}_{\widehat{x^1}},\frac{\bx^{\brvec_2}}{\brvec_2!},...,\frac{\bx^{\brvec_N}}{\brvec_N!}\Bigr) = \frac{1}{k!(k-1)!} \prod_{\ell=2}^k (\deg p - \ell)
        (\deg q - \ell) \cdot \\
        \cdot \{(\deg p -1)m_1-(\deg q -1)n_1\} \cdot \frac{pq}{x^1},
    \end{multline}
    where, of course, if $x^1\nmid pq$, then the coefficient is zero by virtue of $m_1=n_1=0$.\footnote{Cf.\ Footnote~\ref{foot:curly-bracket} on p.~\pageref{foot:curly-bracket} for a simplification of notation of the curly bracket part of the main formula~\eqref{eq:golden-formula}.}
\end{theor}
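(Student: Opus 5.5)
We follow the strategy of the proof of Theorem~\ref{thm:structure-loneliness}. By Theorem~\ref{thm:general-vandermonde}, the left-hand side of \eqref{eq:golden-formula} is a monomial. Its term is
\[
  \bx^{\bnvec+\bmvec+\sum_{j\geqslant 3}\brvec_j-\frac{kN}{d+1}\Bone}=\frac{pq}{x^1},
\]
since the standard monomials $1$ and $x^1$ have been removed and the degree shifts balance by Lemma~\ref{lem:combinatorial}. So it remains to compute the numerical coefficient
\[
  G(\bn,\bm):=\det\Van_d^k(\bm,\bn,\brvec_3,\dots,\brvec_N)\Big/\prod_{j\geqslant3}\brvec_j!,
\]
or equivalently the quasi-triangular determinant $\det\Delta_d^k$. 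We view $G$ as a polynomial in the $2d$ variables $n_1,\dots,n_d,m_1,\dots,m_d$. Two structural facts come from the determinant. First, $G$ is antisymmetric under $\bn\leftrightarrow\bm$, because two columns are swapped. Second, $G$ has degree at most $k$ separately in $\bn$ and in $\bm$, so its total degree is at most $2k$.

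The plan is to exhibit $2k$ degrees' worth of factors of $G$ and then fix the constant.

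\textbf{Step 1: the factors $(\deg p-\ell)$, $2\leqslant\ell\leqslant k$.} Fix $\bm$ generic and regard $G$ as a polynomial in $\bn$. We work in the quasi-triangular form \eqref{eq:triangular-vandermonde-matrix-element}, which is the Wronskian evaluated at $\bx=\Bone$ up to normalisation. We claim $G$ vanishes whenever $|\bn|=\ell\in\{2,\dots,k\}$. For natural $\bn$ this is clear: $p$ is then a standard monomial of degree $\ell\geqslant2$, so it duplicates one of the remaining arguments and the determinant vanishes by Proposition~\ref{prop:sufficient-vandermonde-zero}\ref{prop8-i:duplicate columns}. To pass from these lattice points to the whole hyperplane $\{|\bn|=\ell\}$ we need an extra argument. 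The preferred route is a column operation on the $\bn$-column that shows its restriction to $|\bn|=\ell$ lies in the span of the columns of the degree-$\ell$ standard monomials. This rests on the Euler identity $\sum_i x^i\partial_i a=(\deg a)\,a$: the rows of total order $r$ contract, via Euler, against the rows of order $r-1$. A fallback route is a Zariski-density argument, which works once we know the quotient is of low enough degree in the variables transverse to the hyperplane. Either way, $(|\bn|-\ell)$ divides $G$ for each $\ell=2,\dots,k$. By the antisymmetry under $\bn\leftrightarrow\bm$, the factors $(|\bm|-\ell)$ also divide $G$. These are $2(k-1)$ distinct linear factors.

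\textbf{Step 2: the remaining quadratic factor.} After Step 1, the quotient $h$ has degree at most $2$ and is antisymmetric. It vanishes when $n_1=m_1=0$, because then the $\partial_{x^1}$-type rows force the column for $x^1$ to be missing, as in Example~\ref{ex:arbitrary-replaced-monomial}. It also vanishes when $p=x^1$, i.e.\ $\bn=\brvec_2$, since this gives the $\widehat{1}\mapsto q$ case of Theorem~\ref{thm:structure-loneliness}, in which $\deg q=k+1$ appears. From these constraints we would match $h$ against the curly bracket $(|\bn|-1)m_1-(|\bm|-1)n_1$. As a consistency check, Theorem~\ref{thm:structure-loneliness} with $j=1$ and $a=q$, which is the $p=x^1$ case, gives the value
\[
  (-)^k\prod_{\ell=1}^k(\deg q-\ell)\,q,
\]
which matches \eqref{eq:golden-formula}.

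\textbf{Step 3: the constant.} We evaluate both sides at $p=x^1$ with $q$ arbitrary, using Theorem~\ref{thm:structure-loneliness}. Alternatively, we could specialise to $q=(x^1)^{k+1}$ and $p=x^1$.

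The main obstacle is Step 1, in particular upgrading vanishing at lattice points of the hyperplane $|\bn|=\ell$ to divisibility by $(|\bn|-\ell)$. The Euler-identity column reduction is the intended way to handle this, and getting it right is the crux of the proof.
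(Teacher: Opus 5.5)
Your strategy is the paper's: read off $pq/x^1$ from Theorem~\ref{thm:general-vandermonde}, peel off the $2(k-1)$ factors $(\deg p-\ell)(\deg q-\ell)$, identify the remaining antisymmetric factor of bidegree $(1,1)$, and fix the constant. The genuine gap is in Step~2, which is where the real content of the theorem lies.

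Your claim that $h$ vanishes at $p=x^1$ is false. At $\bn=\brvec_2$ the curly bracket equals $-(\deg q-1)$, and your own Step~3 shows $W_d^k(q,x^1,\dots)\neq0$ for generic $q$. In fact, imposing $h(\brvec_2,\bm)\equiv0$ would force $h\equiv0$. What antisymmetry, bidegree $(1,1)$ and the (correct) vanishing on the whole subspace $\{n_1=m_1=0\}$ actually leave is
\[
  h=a_1(n_1-m_1)+\sum_{j=2}^d\lambda_{1j}(n_1m_j-n_jm_1),
\]
with $d$ undetermined constants, and ``we would match'' does not pin these down. You need $d-1$ further relations. The paper gets $\lambda_{1j}=-a_1$ (in this notation) from the repeated-argument vanishing at $q=x^1$, $p=x^j$ with $j\geqslant2$, where no factor $(\deg(\cdot)-\ell)$ vanishes.

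Alternatively, promote your ``consistency check'' to a constraint. Theorem~\ref{thm:structure-loneliness} gives $G(\brvec_2,\bm)=\frac{(-)^k}{k!}\prod_{\ell=1}^k(|\bm|-\ell)$ identically in $\bm$. Hence $h(\brvec_2,\bm)=-(|\bm|-1)/(k!\,(k-1)!)$. Comparing the coefficients of $1$, $m_1$ and $m_j$ gives $a_1=1/(k!\,(k-1)!)$ and $\lambda_{1j}=-a_1$. This yields the curly bracket and the constant at once.

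In Step~1 you are right that vanishing at the lattice points $\{\bn\in\BBNo^d:|\bn|=\ell\}$ does not by itself give divisibility by $(|\bn|-\ell)$; the paper simply asserts it. Your preferred route fails, though: for non-integer $\bn$ on that hyperplane, the $\bn$-column is not in the span of the degree-$\ell$ standard columns. For $d=2$, $k=3$, $\ell=2$, the $\partial_x^3$ row of $\Delta_d^k$ is zero on all three degree-$2$ standard columns. But at $\bn=(\tfrac12,\tfrac32)$ it equals $n_1(n_1-1)(n_1-2)=\tfrac38$.

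Your fallback does work once stated as a downward induction on $\ell=k,k-1,\dots,2$:
\begin{itemize}
\item Divide out the factors $(|\bn|-\ell')$ with $\ell'>\ell$; these are nonzero on $|\bn|=\ell$.
\item The quotient then has degree at most $\ell$ in $\bn$ and vanishes at the $\binom{d+\ell-1}{d-1}$ lattice points of the simplex $|\bn|=\ell$.
\item These points determine a polynomial of degree at most $\ell$ on that hyperplane uniquely, so the quotient vanishes there.
\end{itemize}
So what must be small is the degree along the hyperplane, not transverse to it.
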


    

\begin{problem}[vanishing mechanism in the Main formula~\eqref{thm:golden-formula}]\label{prob:vanishing-mechanism-golden-formula}
    Let $p(\bx)=(x^1)^{n_1}\cdots(x^d)^{n_d}$ and $q(\bx)=(x^1)^{m_1}\cdots(x^d)^{m_d}$ be arbitrary monomials over base dimension $d\geqslant 2$. Let the degree of the generalised Vandermonde determinant be $k\geqslant 1$. Consider all monomials up to degree $k$: $1,x^1,$$...,$$x^d,(x^1)^2,x^1x^2,$$...,$$(x^d)^k$; let their power $d$-tuples be $\brvec_1=(0,...,0)$, $\brvec_2=(1,0,...0)$, $\brvec_3=(0,1,0,...,0)$, ..., $\brvec_N=(0,...,0,k)$, and let $\bnvec=(n_1,...,n_d)$ and $\bmvec=(m_1,...,m_d)$ be the power $d$-tuples of $p$ and $q$, respectively. 
    
    Calculate their Wronskian as in Theorem~\ref{thm:golden-formula}; by formula \eqref{eq:golden-formula} it vanishes. \emph{What is the (algebraic) mechanism\footnote{Namely, are some (specific) rows or columns (clearly) linearly dependent as a result, or which terms cancel during the expansion?} for the complete generalised Vandermonde determinant of the power $d$-tuples, $\det \Van_d^k(\bm,\bn,\br_3,...,\br_N)=0$, to vanish} in each of the following cases:
    \begin{enumerate}[label=({\it\roman*})]
        \item\label{item:prob-vanishing:zero-degrees} if the degrees in $x^1$ of $p(\bx)$ and $q(\bx)$ are zero, $n_1=m_1=0$ (i.e.\ $x^1\nmid pq$);
        \item if the total degrees and the degrees in $x^1$ of $p(\bx)$ and $q(\bx)$ are equal, $\deg p=\deg q$ and $\deg_1 p = \deg_q q$?
    \end{enumerate}
    Refer to Theorem~\ref{thm:general-vandermonde} on p.~\ref{thm:general-vandermonde} about the complete generalised Vandermonde determinant appearing as the coefficient. For case~\ref{item:prob-vanishing:zero-degrees} refer to the deficiency condition~\ref{prop8-iii:prop-sufficient-vandermonde:deficiency} from Proposition~\ref{prop:sufficient-vandermonde-zero} on p.~\pageref{prop:sufficient-vandermonde-zero} for the Vandermonde determinant to vanish.
\end{problem}

\begin{rem}
    Direct expansion of the complete generalised Wronskian determinant in main formula~\ref{eq:golden-formula}, or, effectively, of the complete generalised Vandermonde determinant, is very difficult over base dimension $d\geqslant 2$ if the differential order is $k\geqslant 2$. The proof of Theorem~\ref{thm:golden-formula} by direct expansion in the case $k=1$ is given in Appendix~\ref{SecAppExpansion}.
\end{rem}

\begin{proof}[Proof (of Theorem~\ref{thm:golden-formula})]
    Recall that by Theorem~\ref{thm:general-vandermonde} the result of the bracket is the required monomial, whose coefficient is the complete generalised Vandermonde determinant.
    The proof strategy is to view this Vandermonde determinant as a polynomial in the degrees of monomials $p$ and $q$. A polynomial of degree (at most) $k$ can have (if they exist, i.e.\ all roots are in the base field) at most $k$ linear factors in the base variables. In this proof we find \emph{all} factors of the polynomial, hence we decompose the polynomial as a product of the linear factors. Of the $k$ factors, $k-1$ factors will be immediate and do not mix the degrees of the monomials $p$ and $q$, whereas the final $k$th factor (see the curly bracket in formula~\eqref{eq:golden-formula}) mixes the degrees and will require significant work to find. We then finally find the universal (independent of $p$ and $q$) constant in front of the linear factors.

    \medskip
    The case $k=1$ is proven in Appendix~\ref{SecAppExpansion}. Let $k\geq 2$, albeit the proof still holds for $k=1$. Calculate the resulting monomial by~\ref{thm:general-vandermonde}: the $pq/x$ term is obtained by adding the monomials' degrees, for which the coefficient is the generalised complete Vandermonde determinant. We view the result of the determinant as a polynomial in $\bm=(m_1,...,m_d)$; expansion over the first column yields a polynomial of (total) degree $k$. Therefore, this polynomial in $\bm$ has at most $k$ linear factors (in $\bm$).

    If the column of $\bm$'s is equal to another column, the determinant must be zero; this introduces a linear factor. As the set of columns of degrees at least two is complete, equality to another column of degree at least 2 is equivalent to $2\leq \deg(q)\leq k$; this introduces $k-1$ linear factors of the form $(\deg q -\ell)$ for $\ell=2,...,k$ hence their (l.c.m.)\ product~$\prod_{\ell=2}^k (\deg(q)-\ell)$. 

    \medskip
    Instead viewing the determinant as a polynomial in variables $\bn=(n_1,...,n_d)$ introduces, by the mechanism above applied to $p$, $k-1$ linear factors in $\bn$'s of the form $(\deg p -\ell)$ for $\ell\in\{2,...,k\}$. The factors involving the degrees in $q$ and the degrees in $p$ do not mix, hence we their product, $\prod_{\ell=2}^k (\deg p-\ell)\prod_{\ell=2}^k (\deg q-\ell)$, is a factor of the determinant. 
    
    Viewing the determinant as a polynomial either in $\bm$'s or $\bn$'s, it follows that only one other linear factor in $\bm$'s (or, equivalently, $\bn$'s) is possible; the factor may mix the individual components of $\bm$ and $\bn$. \emph{We now find this factor;} denote it by the function $f(\bn,\bm)$:
    \begin{equation}\label{eq:proof-of-golden-intermediate-factorisation}
        W_d^k\Bigl(q,p,\frac{\bx^{\brvec_2}}{\brvec_2!},...,\frac{\bx^{\brvec_N}}{\brvec_N!}\Bigr) = (\mathrm{const})\prod_{\ell=2}^k (\deg p-\ell)
        (\deg q -\ell)\cdot f(\bn,\bm) \cdot \frac{pq}{x^1},
    \end{equation}
    where the constant\footnote{This constant is not identically zero as there exists a choice of monomials, $q(\bx)=1$ and $p(\bx)=x$, which give a non-zero r.\!-h.s.\ in~\eqref{eq:proof-of-golden-intermediate-factorisation}, namely, the constant unit polynomial.} $(\mathrm{const})$ is independent of $\bn$ and $\bm$. The function $f(\bn,\bm)$ has degree one in $\bn$'s and also degree one in $\bm$'s.
    
    The factor $f(\bn,\bm)$ cannot equal a previously factor (i.e.\ of the form $(\deg(\cdot)-\ell)$) as the choice of monomials $p=q$, both of degree $k+1$, yields zero in~\eqref{eq:proof-of-golden-intermediate-factorisation}, which is not captured by the previous factors, which do not mix $\bn$'s and $\bm$'s; hence this root must be captured by the factor $f(\bn,\bm)$. The factor $f(\bn,\bm)$ of degree-one in each of the two $d$-tuples of arguments mixes $\bm$'s and $\bn$'s, hence written in full generality, is
    \begin{equation*}
        f(\bn,\bm)=\sum_{i=1}^d\lambda^{i,0}n_i+\sum_{j=1}^d \lambda^{0,j}m_j + \sum_{i,j} \lambda^{ij}n_im_j + C,
    \end{equation*}
    with all $\lambda$ factors and $C$ constants. Notation: $\bn=(n_1,...,n_d)$ and $\bm=(m_1,...,m_d)$. For convenience, the indices $i$ and $j$ (typically, $i<j$) will typically range from $1$ to $d$, whereas the indices $\mu$ and $\nu$ will incorporate also the $0$ value: $\mu,\nu\in \{0,1,...,d\}$.

    \smallskip
    As the determinant is anti-symmetric upon an exchange of $p$ and $q$, or, equivalently, the swap $n_i\rightleftarrows m_i$ for all $i\in\{1,...,d\}$, but all factors of $(\deg(\cdot)-\ell)$ are symmetric in $n_i\leftrightarrows m_i$, it follows that $f$ is skew w.r.t\ the two $d$-tuple arguments. Therefore $\lambda^{\mu\nu}=-\lambda^{\nu\mu}$ and $C=0$ for all indices $\mu,\nu\geq 0$; hence diagonal terms vanish: $\lambda^{ii}=0$. The function $f$ thus becomes
    \begin{equation*}
        f(\bn,\bm) = \sum_{i=1}^d \lambda^{i,0}(n_i-m_i) + \sum_{i<j} \lambda^{ij}(n_im_j - n_jm_i).
    \end{equation*}
    The choices of $d$-tuples $\bn=(0,...,0)$ and $\bm=(0,1,0,...,0)$ corresponding to the monomials $p=1$ and $q=y$, which in formula~\eqref{eq:proof-of-golden-intermediate-factorisation} give zero as the argument $y$ is repeated. But none of the terms under the product in~\eqref{eq:proof-of-golden-intermediate-factorisation} are zero, hence the vanishing condition must come from a root of the term $f(\bn,\bm)$:
    \begin{equation*}
        0=f(\bn=\mathbf{0},m_2=1)=\textstyle\sum_{i=1}^d (-)\lambda^{i,0} m_i = -\lambda^{2,0}\cdot 1,
    \end{equation*}
    which implies $\lambda^{2,0}=0$; similarly, all other zero-index constants vanish: $\lambda^{\mu,0}=0$ for indices $\mu\geq 2$ by choosing the $d$-tuple $\bm$ to be $m_{\mu}\neq 0$ and $m_{\nu}=0$ otherwise ($\nu\neq\mu$). This yields
    \begin{equation}\label{eq:proof:intermediate-form-for-mixed-term}
        f(\bn,\bm) = -\lambda^{0,1}(n_1-m_1)+\sum_{i<j}\lambda^{ij}(n_i m_j-n_j m_i)
    \end{equation}
    after applying $\lambda^{1,0}=-\lambda^{0,1}$.

    Let us now put the monomials $q=x$ and $p=y$ as arguments (hence $\bm=(1,0,...,0)$ and $\bn=(0,1,0,...,0)$); this choice yields a vanishing r.\!-h.s.\ in~\eqref{eq:proof-of-golden-intermediate-factorisation} as $y$ is a repeated argument. But again none of the factors in the products vanish, thus the vanishing condition is due to:
    \begin{equation*}
        0=f(n_2=1,m_1=1)= -\lambda^{0,1}(-)m_1+\lambda^{1,2}(-)n_2m_1 = \lambda^{0,1}-\lambda^{1,2},
    \end{equation*}
    which implies $\lambda^{1,2}=\lambda^{0,1}$; similarly, we obtain $\lambda^{1,\mu}=\lambda^{0,1}$ for every index $\mu\in\{2,...,d\}$ by putting as argument the monomial $p=x^{\mu}$. 
    Split the sum in the term~\eqref{eq:proof:intermediate-form-for-mixed-term} into two sums according to whether the summation variable $i$ is equal to one, that is, $i=1$ and $1<i<j$:
    \begin{equation*}
        \sum_{i<j}\lambda^{ij}(n_i m_j-n_j m_i) = \sum_{j=2}^d \lambda^{1,j} (n_1 m_j-n_j m_1) + \sum_{1<i<j} \lambda^{ij}(n_i m_j-n_j m_i),
    \end{equation*}
    which gives, after applying the substitution $\lambda^{1,j}=\lambda^{0,1}$ found above, the expression
    \begin{align}
        f(\bn,\bm)&=\lambda^{0,1}\Bigl\{-(n_1 - m_1) + \sum_{j=1}^d (n_1m_j - n_jm_1)\Bigr\} + \sum_{1<i<j} \lambda^{ij}(n_i m_j-n_j m_i)\notag{}\\
        &= \lambda^{0,1}\{(\deg p-1)m_1 - (\deg q -1)n_1 \} + \sum_{1<i<j} \lambda^{ij}(n_i m_j-n_j m_i),\label{eq:proof:golden-factor-curly-plus-sum}
    \end{align}
    where we used that $\deg(p)=n_1+\cdots+n_d$ and $\deg(q)=m_1+\dots+m_d$. Notice that the first summand is exactly the curly bracket in the main formula~\eqref{eq:golden-formula} up to an overall constant $\lambda^{0,1}$. We now show the sum comprising the second summand vanishes.

    \medskip
    Put as arguments in Eq.~\eqref{eq:golden-formula} the monomials $p(\bx)=y^{k+1}$ and $q(\bx)=y^{k+2}$; they are not divisible by the $x=x^1$ coordinate, namely, $n_1=0=m_1$; hence the $pq/x$ term is not a polynomial, and condition~\ref{prop8-iii:prop-sufficient-vandermonde:deficiency} from Proposition~\ref{prop:sufficient-vandermonde-zero} on p.~\pageref{prop:sufficient-vandermonde-zero} implies that the coefficient in front of $pq/x$ vanishes. But as $\deg p,\deg q>k$ none of the products in formula~\eqref{eq:proof-of-golden-intermediate-factorisation} are zero; this yields another root of the factor $f(\bn,\bm)$:
    \begin{equation*}
        0=f(\bn,\bm)=\sum_{i<i<j} \lambda^{ij} (n_i m_j - n_j m_i),
    \end{equation*}
    where the the curly bracket in~\eqref{eq:proof:golden-factor-curly-plus-sum} vanished by $n_1=0=m_1$. 
    We therefore obtain
    \begin{equation*}
        f(\bn,\bm)=\lambda^{0,1} \{(\deg p-1)m_1-(\deg q-1)n_1\},
    \end{equation*}
    where $\lambda^{0,1}$ is a universal constant independent of $\bn$ and $\bm$. We now proceed to find it.
    
    \smallskip
    Putting the arguments $p$ and $q$ to be the monomials they replaced, that is, $q=1$ and $p=x$, we know that the result of the Wronskian in Eq.~\eqref{eq:golden-formula} must yield unit (as product of units on the diagonal),
    hence obtaining overall that
    \begin{equation*}
        1=(\mathrm{const})\cdot \prod_{\ell=2}^k(1-\ell)\prod_{\ell=2}^k(-\ell)\cdot \{0+1\},
    \end{equation*}
    which gives $(\mathrm{const})=1/k!\,(k-1)!$.
    We thus recover the formula in~\eqref{eq:golden-formula}, which completes the proof.
\end{proof}


\begin{rem}\label{rem:reason-l2-formula}
    The reason $\ell$ starts in formula~\eqref{eq:golden-formula} from $\ell=2$ is that the first degree for which the arguments in~\eqref{eq:golden-formula} contain \emph{all} the monomials of that degree\footnote{
        This is equivalently referred to as a \emph{complete} degree; refer to the Definition~\ref{def:wronskian} on p.~\pageref{def:wronskian} of the \emph{complete} generalised Wronskian, which is complete in all degrees $0,1,...,k$ by the monomial--differential-operator correspondence (basically, $\partial^3/\partial x^2\partial y\leftrightarrows x^2y$).
    } 
    is 2. This is also why in formula~\eqref{eq:structure-equation} on p.~\pageref{eq:structure-equation} the second product runs from $\ell=r_j+1$ to $\ell=k$: the set of monomials of degree $r_j$ lacks the monomial $\bx^{\brvec_j}/\brvec_j!$, whereas all higher degrees are complete.
\end{rem}

Let us give a few computational examples illustrating this delayed start of $\ell$ at the first complete degree in the set of monomial arguments.

\begin{ex}[$d=2$, $k=2,3,4,5$, $\widehat{(x)^2}\mapsto p$]\label{ex:comp-d2-k2345-prepl-x2}
    Fix the base dimension $d=2$ and vary the total differential order $k\in\{2,3,4,5\}$ of the complete generalised Wronskian $W_d^k$ as in Example~\ref{ex:comp-examples-golden-formula} on p.~\pageref{ex:comp-examples-golden-formula}; adopt the notation $p(\bx)=x^{n_1}y^{n_2}$ and $q(\bx)=x^{m_1}y^{m_2}\in \Bbbk[\bx]$. As arguments, take the standard monomials $1,x,y,x^2,xy,...,xy^{k-1},y^k$, but replace $1$ by the monomial $q(\bx)$ and $(x)^2=xx$ by $p(\bx)$.\footnote{In contrast, $p$ replaced $x$ in Example~\ref{ex:comp-examples-golden-formula}.} Put, by definition,\footnote{\label{foot:curly-bracket}In contrast, in Example~\ref{ex:comp-examples-golden-formula} and the main formula~\eqref{eq:golden-formula} on pp.~\pageref{ex:comp-examples-golden-formula}~and~\pageref{eq:golden-formula}, resp., to simplify the notation we could have put
    \begin{equation}\label{eq:x-repl-gold-curly-between-pq}
        \{p, q\}_{\widehat{x}} := \{(\deg p-1)m_1 - (\deg q-1)n_1\}\frac{pq}{x}.
    \end{equation}
    There at least, unlike here, the formulae were manageable.
    }
    \begin{equation}\label{eq:xx-repl-curly-between-pq}
        \{p, q\}_{\widehat{xx}} := \{(\deg p-1)(\deg p -2)\,m_1(m_1-1) - (\deg q-1)(\deg q -2)\,n_1(n_1-1)\}\frac{pq}{xx},
    \end{equation}
    which we use to simplify the formulae below.
    Computations using Script~\ref{script:Wronskian-pq} in Appendix~\ref{SecAppPyScripts}, setting the optional argument \texttt{prepl=(x,x,)}, gave:\footnote{The raw outputs are given in Appendix~\ref{SecAppRawOutputs}.}
    \begin{subequations}\label{eqs:comp-p-repl-x2-highk-k2345}
    \begin{align}
        W_{d=2}^{k=2}(q,x,y,p,xy,y^2)&=-1\cdot \{p, q\}_{\widehat{xx}}, \label{eq:comp-p-repl-x2-highk-k2} \\
        W_{d=2}^{k=3}(q,x,y,p,xy,...,xy^2,y^3)&=-2^4\,3\cdot (\deg p-3)(\deg q-3)\cdot \{p, q\}_{\widehat{xx}} \label{eq:comp-p-repl-x2-highk-k3} \\
        W_{d=2}^{k=4}(q,x,y,p,xy,...,xy^3,y^4)&=-2^{11}\,3^5 \prod_{\ell=3,4}(\deg p-\ell)(\deg q-\ell)\cdot \{p, q\}_{\widehat{xx}}, \label{eq:comp-p-repl-x2-highk-k4} \\
        W_{d=2}^{k=5}(q,x,y,p,xy,...,xy^4,y^5)&=-2^{27}\,3^{10}\,5 \prod_{\ell=3,4,5}(\deg p-\ell)(\deg q-\ell)\cdot \{p, q\}_{\widehat{xx}} \label{eq:comp-p-repl-x2-highk-k5}
    \end{align}
    \end{subequations}
    Notice how the factor in the curly brackets $\{p,q\}_{\widehat{xx}}$ is the same in all equations~\eqref{eqs:comp-p-repl-x2-highk-k2345}, whereas the product ranges from $\ell=3$ to $\ell=k$; the same mechanism as explained in Remark~\ref{rem:reason-l2-formula}.

    We contrast the coefficient factor (which mixes the degrees of $p$ and $q$) in the curly brackets in formulas~\eqref{eqs:comp-p-repl-x2-highk-k2345} above against formulas~\eqref{eqs:comp-gf-highk-k2345} in Example~\ref{ex:comp-examples-golden-formula} on p.~\pageref{ex:comp-examples-golden-formula}:
    \begin{align*}
        \{p, q\}_{\widehat{x}}  &:= \{(\deg p-1)m_1 - (\deg q-1)n_1\}\frac{pq}{x},\qquad\text{(this is from formulas~\eqref{eqs:comp-gf-highk-k2345})}\\
        \{p, q\}_{\widehat{xx}} &:= \{(\deg p-1)(\deg p -2)\,m_1(m_1-1) - (\deg q-1)(\deg q -2)\,n_1(n_1-1)\}\frac{pq}{xx}.
    \end{align*}
    It is evident that the replacement $\widehat{x}\mapsto p$ versus $\widehat{xx}\mapsto p$ turns one linear factor in the degrees $n_1,n_2$ of $p$ into $(\deg p-2)$ and $(n_1-1)$ inside the curly bracket; recall that the sum of the factors' total degrees in $n_1$ and $n_2$ must be $k$ (see commentary within the proofs of Theorems~\ref{thm:structure-loneliness} and~\ref{thm:golden-formula}). 
    
    This suggests that replacing a higher-degree term by $p(\bx)$  (for instance, $\widehat{(x)^3},\widehat{(x)^4},...,\widehat{(x)^k}$, for which one can easily \emph{conjecture} the factor in the equivalent curly brackets,
    \begin{equation}\label{eq:xR-repl-gold-curly-between-pq}
        \{p,q\}_{\widehat{(x)^r}} \stackrel{?}{=} \Biggl\{ \prod_{\ell=1}^{r}(\deg p-\ell) \prod_{\ell=0}^{r-1}(m_i-\ell) - \prod_{\ell=1}^{r}(\deg q-\ell) \prod_{\ell=0}^{r-1}(n_i-\ell) \Biggr\} \frac{pq}{(x)^r},
    \end{equation}
    or $\widehat{xy}$, $\widehat{x^2y}$, $\widehat{x^2 y^2}$ and so on\footnote{We do not yet know what will be there.}) will result in fewer terms of the form $(\deg p - \ell)$, and the terms within the curly bracket will be of more complicated form; see Problem~\ref{open:factorisations-vandermonde} and the accompanying Commentary.
\end{ex}

\begin{open}
    The structure constants for our finite-dimensional $N$-ary Lie algebras can be written as a differential operator acting on the $(k+1)$th degree argument -- see Eq.~\eqref{eq:structure-equation-as-differential} on p.~\pageref{eq:structure-equation-as-differential}. Can the skew-symmetric operators in the curly brackets -- Eqs.~\eqref{eq:x-repl-gold-curly-between-pq}, \eqref{eq:xx-repl-curly-between-pq}, and~\eqref{eq:xR-repl-gold-curly-between-pq} -- be written as differential operators acting on the (poly- or) monomials $p$ and $q$?
\end{open}

\begin{rem}[Vandermonde factorisations take the simplest form]
    We have so far explained the nature of every factor in formula~\eqref{eq:structure-equation} from Theorem~\ref{thm:structure-loneliness} and in formula~\eqref{eq:golden-formula} in Theorem~\ref{thm:golden-formula}; this is the fullest and simplest possible factorisation of the generalised Vandermonde determinants which we can provide (cf.\ formula~\eqref{eq:vandermonde-ordinary} in Theorem--Definition~\ref{thmdef:ordinary-vandermonde} when the base dimension is $d=1$). 
    Now, for $d\geqslant 2$, all the factors which we actually present have appeared either due to:
    \begin{enumerate}[label=({\it\roman*})]
        \item the monomial $p$ coinciding with a monomial of degree $\ell$ from the range $\{r+1,...,k\}$, where $r$ is the degree of the monomial, which $p$ replaced as argument;
        \item the monomial $p$ coinciding with a monomial of degree less or equal to $r$;
        \item (specific only to Eq.~\eqref{eq:golden-formula}) the two monomials $p$ and $q$ are interrelated in such a way that the curly bracket in~\eqref{eq:golden-formula} vanishes.
    \end{enumerate}
    If three or more of the arguments were arbitrary monomials, we should expect a totally-skew factor mixing the degrees of the three arbitrary monomials. It is an open problem.
\end{rem}

\begin{open}[factorisations of the generalised Vandermonde determinant]\label{open:factorisations-vandermonde}
    Put the base dimension $d\geqslant 2$.
    Suppose that the number of standard monomials (from the set $1$, $x^1$, ..., $x^d$, $(x^1)^2$, $x^1x^2$, ..., $(x^d)^k$) replaced by the new (arbitrary) monomials $p(\bx)$ and $q(\bx)$ is increased to three or more \emph{monomials} (i.e.\ $p_1=p$, $p_2=q$, $p_3$, $p_4$ and so on). What is (or are) the simplest factorisation(s) of the complete generalised Vandermonde determinant of the arguments' power $d$-tuples (relative to the complete generalised Wronskian $W_{d\geqslant 1}^{k\geqslant 1}$)?

    Note that the answer depends on the choice of standard monomials to be replaced by the new ones.
\end{open}

\begin{comm}[to Open problem~\ref{open:factorisations-vandermonde}]
    In this paper, specifically Theorems~\ref{thm:structure-loneliness} and~\ref{thm:golden-formula}, we explored the cases when:
    \begin{enumerate}[label=({\it\roman*})]
        \item any single monomial from the standard set is replaced;
        \item two monomials are replaced, specifically $1$ and $x^1$ (equivalently by symmetry, $1$ and $x^i$ for any $i\in\{1,...,d\}$).
    \end{enumerate}
    Replacing other pairs of monomials from the standard set of degrees not exceeding $k\geqslant 1$ over base dimension $d\geqslant 1$ will result in other formulas and other factorisations of the complete generalised Vandermonde determinants: see Example~\ref{ex:comp-d2-k2345-prepl-x2}.

    F.~Brown in~\cite{FBrownVandermonde2024} expresses the generalised -- \textbf{NB!} `generalised' in a way still more general than ours!\footnote{Our \emph{complete} generalised Vandermonde determinants correspond to those in \S2.4 of \cite{FBrownVandermonde2024}. The further generalisation drops the `complete' from the complete generalised Vandermonde determinant; the Vandermonde determinant is now with respect to an \emph{in}complete Wronskian (cf. \cite{AVK25YerQTS13WrosnkBJP} for definitions and a discussion).} -- Vandermonde determinant (of arbitrary monomial arguments) as sums of smaller-size generalised Vandermonde determinants. In the sums the terms do not visibly share any common factor.
\end{comm}

We now state our second main theorem. For relevant definitions and details see Definitions~\ref{def:consistent-lonely-chubby-tall-lanky} and accompanying Lemma~\ref{lem:chubby-tall-lanky} on pp.~\pageref{def:consistent-lonely-chubby-tall-lanky} and~\pageref{lem:chubby-tall-lanky}, resp., about the different types of $N$\nobreakdash-ary polynomial SH-Lie algebras, and why every \emph{consistent} (cf.~\ref{lem:consistency}) algebra is either trivial, lonely, chubby, or lanky; see also Footnote~\ref{foot:consideration-of-pure-monomials} on p.~\pageref{foot:consideration-of-pure-monomials} for the reasoning why it is sufficient to consider algebras spanned by pure monomials. In the first main Theorem~\ref{thm:main-1-existence} we showed that every lonely algebra is finite-dimensional of dimension $N+1$; trivial algebras are trivially of dimension at most $N$. 
In the proof of the theorem we extensively use the main formula~\eqref{eq:golden-formula}.

\begin{theor}[main theorem-\textbf{2}: \emph{no other} consistent finite-dimensional algebras]\label{thm:main-2:non-existence}
    Take any base dimension $d\geqslant 1$ and differential order $k\geqslant 1$; set $N=\binom{d+k}{k}$. Put the \emph{\underline{com}p\underline{lete}} generalised Wronskian determinant $W_d^k
    $ as the $N$\nobreakdash-ary bracket. Suppose that an $N$\nobreakdash-ary polynomial SH-Lie algebra $\EuA$ is \emph{consistent}, but \emph{\underline{not lonel}y}; i.e.\ $\EuA$ is either of the two options:
    \begin{enumerate}[label=({\it\roman*})]
        \item $\EuA$ is chubby, that is, $\EuA\supseteq \Bbbk_k[\bx]\oplus\langle p(\bx),q(\bx)\rangle$, where $p\not\sim q$ are (poly- or) monomials of degrees $\deg(p)=k+1=\deg(q)$;
        \item $\EuA$ is lanky, that is, $\EuA$ contains the monomial $(x^i)^{k+\ell}$, where $\ell\geqslant 2$, and thus the tower of monomials in a single coordinate $x^i$: $\EuA\supseteq \Bbbk_k[\bx]\oplus \langle (x^i)^{k+1},...,(x^i)^{k+\ell}\rangle$.
    \end{enumerate}
    Then $\EuA$ \emph{\underline{cannot}} be finite-dimensional, so $\dim_\Bbbk (\EuA)=+\infty$.
\end{theor}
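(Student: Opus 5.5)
The plan is to show that every chubby or lanky consistent algebra contains monomials of unboundedly large total degree. The engine is the main formula~\eqref{eq:golden-formula} of Theorem~\ref{thm:golden-formula}, applied to the coordinate $x^i$ in place of $x^1$; this is legitimate because all coordinates of the complete Wronskian play symmetric roles. As in Footnote~\ref{foot:consideration-of-pure-monomials}, I will first reduce to the case where the relevant top-degree elements are \emph{monomials}. After expanding by multilinearity, a non-cancelling pair of constituent monomials produces the same output monomial, with non-zero coefficient, as the pair of polynomials. So the new elements produced below really lie in $\EuA$, at least in the monomially spanned setting, and in general via the cancellation argument of that footnote.

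\emph{Key step (growth lemma).} Suppose $\EuA\supseteq\Bbbk_k[\bx]$ contains a monomial $p$ with $\deg p=k+1$ and a monomial $r$ with $D:=\deg r>k+1$. I claim that $\EuA$ then contains a monomial of degree $D+k$.

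Take $q:=r$ in~\eqref{eq:golden-formula}. The prefactor $\prod_{\ell=2}^k(\deg p-\ell)(\deg r-\ell)$ is non-zero, since both degrees exceed $k$. It therefore suffices to find a coordinate $x^i$ for which the curly bracket is non-zero:
\[
k\,\deg_i r-(D-1)\deg_i p\neq 0 .
\]
Suppose instead that this expression vanishes for every $i$. Summing over $i$ gives $kD=(D-1)(k+1)$, that is, $D=k+1$, which is a contradiction. So some $i$ works. A non-zero curly bracket forces $\deg_i p$ and $\deg_i r$ not both to be zero, so $x^i\mid pr$. Hence the bracket yields a non-zero multiple of $pr/x^i$, which has degree $(k+1)+D-1=D+k$.

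Iterating the growth lemma with the same $p$ and the newly produced $r$ gives monomials of degrees $D,D+k,D+2k,\dots$ in $\EuA$. Therefore $\dim_\Bbbk\EuA=+\infty$.

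\emph{Supplying $p$ and $r$ in each case.}
\begin{itemize}
\item \textbf{Lanky case.} Take $p=(x^i)^{k+1}$ and $r=(x^i)^{k+2}$. Both belong to $\EuA$ by Lemma~\ref{lem:chubby-tall-lanky}(\textit{i}). Here the curly bracket is explicitly $k(k+2)-(k+1)^2=-1\neq0$.
\item \textbf{Chubby case.} Take distinct monomials $p\neq q$, both of degree $k+1$. The curly bracket in~\eqref{eq:golden-formula} becomes $k(m_i-n_i)$. Choose $i$ with $m_i\neq n_i$; then $x^i\mid pq$ automatically. This produces $r=pq/x^i$ of degree $2k+1>k+1$, and the growth lemma applies.
\end{itemize}

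The main obstacle is the \emph{reduction to monomials}, rather than the degree bookkeeping. When $\EuA$ is only known to contain polynomials $p,q$, the bracket outputs polynomials. One must check that iterating keeps a non-cancelling top monomial at each stage. I would handle this as in Footnote~\ref{foot:consideration-of-pure-monomials}: at each step, track the leading monomials (for instance, lexicographically extremal ones) and verify, using the same antisymmetric pairing argument, that the coefficient of the new extremal monomial is the non-zero product computed above. Then the degrees of elements of $\EuA$ still grow without bound.
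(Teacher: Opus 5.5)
Your argument is correct, but it is organised differently from the paper's.

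The paper fixes one coordinate. It picks $x^1$ with $n_1\neq m_1$, keeps the degree-$(k+1)$ monomial $q$ in the slot of $1$, and iterates $p_n=p_{n-1}q/x^1=pq^n/(x^1)^n$ in the slot of $x^1$. To see that the curly bracket never vanishes along this particular sequence, it splits into the subcases $m_1>1,\ n_1=0$; $m_1>n_1\geqslant 1$; $m_1=1,\ n_1=0$, obtaining coefficients proportional to $k(m_1+n)$, $k(m_1-n_1+n)$ and $(n+1)k$. It then does a separate computation, giving $nk+\ell-1$, for the lanky tower.

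You instead let the coordinate change from step to step. You guarantee that a good coordinate exists via the identity $\sum_{i=1}^d\{k\deg_i r-(D-1)\deg_i p\}=k+1-D\neq 0$. This removes the case analysis and treats chubby and lanky algebras uniformly. It in fact proves a slightly stronger statement: any consistent algebra containing one monomial of degree $k+1$ and one monomial of degree $>k+1$ is infinite-dimensional. What the paper's route buys in exchange is an explicit sequence along a single coordinate with closed-form non-zero structure constants.

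On the reduction to monomials, which you flag as the main obstacle: the paper's proof simply assumes $p,q$ are monomials and defers to Footnote~\ref{foot:consideration-of-pure-monomials}. Your leading-monomial plan is the more reliable way to do this.
\begin{itemize}
\item With a multiplicative monomial order, the top product $\mathrm{LM}(r)\,\mathrm{LM}(p)/x^i$ can only arise from the pair of leading monomials.
\item The footnote's swap argument does not account for coincidences between unrelated pairs, e.g.\ $x^2\cdot yz=xy\cdot xz$.
\item In the chubby step you only need first to replace $q$ by $q-cp$ so that $\mathrm{LM}(p)\neq\mathrm{LM}(q)$. Row reduction in $\langle p,q\rangle$ achieves this.
\end{itemize}
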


\begin{proof}
    We show that every (\textit{i}) chubby, (\textit{ii}) lanky algebra is infinite-dimensional by producing a sequence of monomials, constructed using the Wronskian determinant, which diverge in degree going to infinity. That is, the algebra contains this infinite sequence of independent monomials, hence is infinite dimensional. All cases proceed via induction.

    \medskip
    \noindent$\bullet$\quad\textbf{Case: chubby }(\textit{i})\textbf{.}\quad Let $p\not\sim q$ be monomials of degree $k+1$.; put $p(\bx)=\bx^\bnvec = (x^1)^{n_1}\cdots (x^d)^{n_d}$ and $q(\bx)=\bx^\bmvec = (x^1)^{m_1}\cdots (x^d)^{m_d}$.  As the monomials are not proportional to each other, there must exist a coordinate, in which their powers differ: $n_i\neq m_i$. Without loss, put $i=1$ (if needed, by rearrangement of the order of coordinates). We now separate the possible cases of $n_1$ and $m_1$: firstly, $m_1>1$ splits into $n_1=0$, $m_1>n_1\geqslant 1$, or $n_1>m_1$; in the last case, swap the monomials $p$ and $q$, such that $m_1$ is always bigger than $n_1$. Secondly, if $m_1=1$, we have $n_1=0$ or $n_1>1$; again, in the last case swap $p\leftrightarrows q$, obtaining $m_1>n_1$. Finally, if $m_1=0$, then $n_1>0$; swapping $p\leftrightarrows q$ we again obtain a previous case. Hence, without loss, an exhaustive list of cases is:
    \begin{align*}
        \text{(\textit{a}) } m_1>1 &\text{ and } n_1=0; &
        \text{(\textit{b}) } m_1>n_1&\geqslant 1; &
        \text{(\textit{c}) } m_1=1 &\text{ and } n_1=0.
    \end{align*}
    We tackle each separately. Denote $x:=x^1$. In all cases put $p_0=p$, $p_1=pq/x$, $p_2=p_1q/x$, ...,  $p_n=p_{n-1}q/x$, ...; note that the division does produce a monomial as $x|q$ in all cases ($m_1\geqslant 1$). As the total degree $\deg(p)=k+1$ and $\deg(p_1)=2(k+1)-1=2k+1$, we inductively find $\deg(p_n)=(n+1)k+1$. Indeed, as $n\to +\infty$, it follows that $\deg(p_n)\to +\infty$. Our goal is to produce each $p_n$ by using arguments from $\EuA$ and previous $p_0,p_1,...,p_{n-1}$.

    Firstly, note that whenever the degrees of the two free arguments in the Wronskian as in the main formula~\eqref{eq:golden-formula} are $\deg(p),\deg(q)>k$, the products of $(\deg(\cdot)-\ell)$ over $\ell\in\{2,3,...,k\}$ do not vanish. Put $c:=1/[k!\,(k-1)!]\cdot \prod_{\ell=2}^k (\deg(p)-\ell) (\deg(q)-\ell)\neq 0$ for every choice of $p$ and $q$ of degrees $\deg(p),\deg(q)>k$.

    \textbf{Subcase }(\textit{a})\textbf{.}\quad
    In this case $\deg_x(q)=m_1$ and, inductively, $\deg_x (p_n)=n(m_1-1)$. Then
    \begin{align*}
        W_d^k\Bigl(q,p,x^2,...,\frac{(x^d)^k}{k!}\Bigr) &= c\cdot \{(k+1-1)m_1-0\}\frac{pq}{x} = c\cdot k m_1 \frac{pq}{x},
        \intertext{where we note that the coefficient does not vanish, and}
        W_d^k\Bigl(q,p_n,x^2,...,\frac{(x^d)^k}{k!}\Bigr) &= c\cdot \{((n+1)k+1-1)m_1-(k+1-1)\,n(m_1-1)\}\frac{p_nq}{x} \\
        &= c\cdot k(m_1+n)\frac{p_nq}{x},
    \end{align*}
    where again the coefficient does not vanish. By induction that we thus obtain that each monomial $p_n$ belongs to the algebra $\langle p_0,p_1,p_2,...,p_n,...\rangle\subseteq \EuA$. As all $p_n$'s are linearly independent, we find that $\dim_\Bbbk \EuA = +\infty$.
    
    \textbf{Subcase }(\textit{b})\textbf{.}\quad
    Now $\deg_x(pq/x)=m_1+n_1-1\geqslant 2$ and, inductively, we see that the degree in $x=x^1$ of the $n$th monomial $p_n(\bx)$ is $\deg_x(p_n)=n(m_1-1)+n_1$. Then
    \begin{align*}
        W_d^k\Bigl(q,p,x^2,...,\frac{(x^d)^k}{k!}\Bigr) &= c\cdot \{(k+1-1)m_1-(k+1-1)n_1\}\frac{pq}{x} = c\cdot(m_1-n_1)\frac{pq}{x},
        \intertext{where the coefficient is not zero, and}
        W_d^k\Bigl(q,p_n,x^2,...,\frac{(x^d)^k}{k!}\Bigr) &= c\cdot \{((n+1)k+1-1)m_1-(k+1-1)(n(m_1-1)+n_1)\}\frac{p_n q}{x}\\
        &= c\cdot k(m_1-n_1+n)\frac{p_nq}{x},
    \end{align*}
    where again the coefficient does not vanish. We see that $\dim_\Bbbk \EuA = +\infty$.
    
    \textbf{Subcase }(\textit{c})\textbf{.}\quad
    In this case $\deg_x(p_n)=0$. Then
    \begin{align*}
        W_d^k\Bigl(q,p,x^2,...,\frac{(x^d)^k}{k!}\Bigr) &= c\cdot \{(k+1-1)m_1-0\}\frac{pq}{x}=c\cdot km_1 \frac{pq}{x},
        \intertext{where the coefficient does not vanish, and}
        W_d^k\Bigl(q,p_n,x^2,...,\frac{(x^d)^k}{k!}\Bigr) &= c\cdot \{((n+1)k+1-1)m_1-0\}\frac{p_nq}{x} = c\cdot (n+1)k\cdot\frac{p_nq}{x},
    \end{align*}
    where the coefficient is never zero. Again thus $\dim_\Bbbk \EuA=+\infty$.

    \medskip
    \noindent$\bullet$\quad\textbf{Case: lanky }(\textit{ii})\textbf{.}\quad
    Put, without loss, $q(\bx)=(x^1)^{k+1}$ and $p(\bx)=(x^1)^{k+\ell}$, where the degree $\ell\geqslant 2$. Then $\deg(pq/x)=k+1+k+\ell-1=2k+\ell$ and, inductively, as the monomials involve only one coordinate: $\deg p_n = \deg_x p_n = (n+1)k+\ell$. Then
    \begin{align*}
        W_d^k\Bigl(q,p,x^2,...,\frac{(x^d)^k}{k!}\Bigr) &= c \{(k+\ell -1)(k+1) - (k+1-1)(k+\ell)\}\frac{pq}{x} = c (\ell-1)\frac{pq}{x},
        \intertext{where $\ell-1>0$, hence the coefficient does not vanish; and}
        W_d^k\Bigl(q,p_n,x^2,...,\frac{(x^d)^k}{k!}\Bigr) &= c \{((n+1)k+\ell-1)(k+1) - (k+1-1)((n+1)k+\ell)\} \frac{p_nq}{x} \\
        &= c (nk+\ell -1)\frac{p_nq}{x},
    \end{align*}
    where the coefficient is never zero. We have found that $\langle p_0,p_1,...,p_n,...\rangle\subseteq \EuA$, so the dimension of the polynomial SH-Lie algebra $\EuA$ is $\dim_\Bbbk \EuA=+\infty$.

    \medskip
    All possible cases have been exhausted, whereby in each case $\dim_\Bbbk \EuA=+\infty$. The proof is thus complete.
\end{proof}

\begin{rem}\label{rem:only-way-to-keep-findim}
    From the proof of the second main Theorem~\ref{thm:main-2:non-existence} and Theorems~\ref{thm:general-vandermonde} and~\ref{thm:golden-formula} we see that to keep the algebra $\EuA$ finite dimensional, we must have a bound on the polynomial degrees (and their sum) of all the arguments in the bracket. To describe that bound, we recall that the sum of degrees minus the shift (see Definition~\ref{def:wronsk-shift-and-monomial-degree} and Corollary~\ref{corr:degree-shift} on p.~\pageref{corr:degree-shift}) cannot exceed the highest degree of a monomial in the set. And if the bound is exceeded, the only way for the algebra $\EuA$ to remain finite-dimensional is the vanishing of the coefficient of the resulting monomial.

    Our computational experiments (some 20--30 cases done by the first author) show that accidentally hitting the zero coefficient is very unlikely (unless it is an instance of repeated arguments or an instance of deficiency: the shift down of the degree is greater than the sum of available degrees; see Proposition~\ref{prop:sufficient-vandermonde-zero}\,\ref{prop8-iii:prop-sufficient-vandermonde:deficiency}). Moreover, we observe that three always is a set of arguments, reached through iterated commutation, such that the coefficient is not zero and the resulting degree exceeds that of any argument. (In fact, most choices work.)
\end{rem}

\begin{open}[\emph{in}-consistent finite-dimensional algebras]\label{open:existence-if-dropped-consistency}
    Relax the consistency assumption in the main Theorems~\ref{thm:main-1-existence} and~\ref{thm:main-2:non-existence}: suppose that the set of generators of the $N$\nobreakdash-ary polynomial SH-Lie algebra $\EuA$ has a gap somewhere in polynomial degrees not exceeding $k$. But still let us require that the algebra is \emph{perfect}: $\Span_\Bbbk W_d^k(\EuA,...,\EuA)=\EuA$.\footnote{By requesting that $\EuA$ is perfect we exclude trivial cases such as $\EuA=\Span_\Bbbk\langle 1,y,y^2,...,y^{100}\rangle$ over base dimension $d=2$, $\BBR^2\ni (x,y)$, and differential order $k=10$: all Wronskian brackets by $W_{d=2}^{k=10}$ are trivially zero, so the algebra $\EuA$, generated by these powers of $y$, is closed and finite-dimensional. We also exclude the trivial cases when $\dim(\EuA)=N$ and the result of the bracket of all $N$ generators again belongs to the algebra~$\EuA$.}

    Can $\EuA$ be finite-dimensional? (So far, no such example is known; the task is to either construct one for some choice of values $d$, $k$, or prove that no such example exists.)
\end{open}

\begin{conjecture}[to Open problem~\ref{open:existence-if-dropped-consistency}: `no']\label{conj:non-existence-of-inconsistent}
    We expect that the answer to the question in Open problem~\ref{open:existence-if-dropped-consistency} is \emph{no}: over dimension $d\geqslant 1$, \emph{in-consistent} and \emph{perfect} polynomial strongly homotopy Lie algebras with complete generalised Wronskian brackets may only be infinite-dimensional.
\end{conjecture}

\subsection*{Conclusion}
\phantom{.}

\medskip
\noindent We have found \emph{\underline{all} finite-dimensional} $N$\nobreakdash-ary \emph{consistent} polynomial SH-Lie algebras $\EuA$: that is, $\Bbbk_k[x^1,...,x^d]\subseteq \EuA \subsetneq \Bbbk[x^1,...,x^d]$ with the \emph{complete} generalised Wronskian determinant -- of differential order $k$ over base dimension $d$ -- as the $N$\nobreakdash-ary bracket, $N=\tbinom{d+k}{k}$. Each such algebra has dimension $(\leqslant)$$N$ or $N+1$: trivial, $\EuA\subseteq \Bbbk_k[\bx]$,  or lonely, $\EuA=\Bbbk_k[\bx]\oplus\langle p(\bx)\rangle$, algebras, respectively, where $p(\bx)\in \Bbbk_{k+1}[\bx]$ is any homogeneous polynomial of degree $k+1$.

\subsection*{Discussion}
\phantom{.}

\medskip
\noindent We leave to future research the investigation of very general properties of $N$\nobreakdash-ary SH-Lie polynomial algebras. Fix the base dimension $d\geqslant 1$ and the differential order $k\geqslant 1$ of the complete generalised Wronskian $W_d^k$; set $N={d+k\choose k}$. Consider an SH-Lie polynomial algebra $\EuA\subseteq \Bbbk[\bx]$ with $W_d^k$ as the $N$\nobreakdash-ary bracket.

Let us make some preliminary steps to phrase Definition~\ref{def:maximal-degree-sum} below. Write each polynomial in $\EuA$ as a linear combination of monomials. Firstly, suppose all such monomials do belong to the algebra. Define the \emph{maximal degree sum} in a coordinate $x^i$ by $N$ linearly independent monomial arguments, $D_i^{(N)}(\EuA)$, to be:
\begin{equation}\label{eq:max-achievable-sum-monomials}
    D_i^{(N)}(\EuA) = \mathop{\max_{a_1,...,a_N\in \EuA}}_{\text{lin.\,indep.\,monoms}} \sum_{j=1}^N \deg_i(a_j).
\end{equation}
(See Definition~\ref{def:wronsk-shift-and-monomial-degree} on p.~\pageref{def:wronsk-shift-and-monomial-degree} for the definition of degree in $x^i$.)

We extend the definition of the maximal degree sum to the case where polynomials in $\EuA$ are not always decomposed into monomials again belonging to $\EuA$. Here we mean mainly the $(k+1)$th degree homogeneous polynomial $p$, such as in a lonely algebra (see Definition~\ref{def:consistent-lonely-chubby-tall-lanky} for `lonely' on p.~\pageref{def:consistent-lonely-chubby-tall-lanky}). Take polynomials $p_j\in \EuA$ from the algebra and decompose each into monomials (not necessarily belonging to $\EuA$): $p_j=\sum_{\alpha_j} a_j^{\alpha_j}$, where each $a_j^{\alpha_j}$ is a monomial. Expanding the Wronskian $W_d^k(p_1,...,p_N)$ of the arguments $p_1,...,p_N$, the result will be a linear combination of Wronskians of the monomial arguments $\sum_{\alpha_1,...,\alpha_N} W_d^k(a_1^{\alpha_1},...,a_N^{\alpha_N})$. The \emph{maximal degree sum} in $x^i$ of $\EuA$ is defined by taking the maximum over the constituent monomials $a_j^{\alpha_j}$.

\begin{define}[maximal degree sum]\label{def:maximal-degree-sum}
    Fix the base dimension $d\geqslant 1$ and the differential order $k\geqslant 1$ of the complete generalised Wronskian determinant $W_d^k$; set $N={d+k\choose k}$. Let $\EuA\subseteq \Bbbk[\bx]$ be an $N$\nobreakdash-ary polynomial SH-Lie algebra. 
    Then the \emph{maximal degree sum}, $D_i^{(N)}$, in coordinate $x^i$  over $N$ (linearly independent) polynomials in $\EuA$ is:
    \begin{equation}\label{eq:max-degree-sum-polynomials}
        D_i^{(N)}(\EuA) = 
        \max_{P} \max_{A(P)}
        \sum_{j=1}^N \deg_i(a_j^{\alpha_j}),
    \end{equation}
    where $P$ consists of all subsets of polynomials $\{p_1,...,p_N\}\subseteq \EuA$ such that $p_1,...,p_N$ are linearly independent, and, after decomposing each polynomial into constituent monomials $p_j(\bx)=\sum_{\alpha_j} a_j^{\alpha_j}$ not necessarily belonging to $\EuA$, put $A(P)$ to be all $\{a_1^{\alpha_1},...,a_N^{\alpha_N}\}$ such that all the monomials are linearly independent. The function $D_i^{(N)}$ takes values in $\BBNo\cup \{+\infty\}$.
\end{define}

The properties of a polynomial SH-Lie algebra $\EuA$ are then characterised by the value of $D_i^{(N)}(\EuA)$ compared with the index-shift down by the complete generalised Wronskian (see Definition~\ref{def:wronsk-shift-and-monomial-degree}).

\begin{define}\label{def:defincient-exact-abundant-promising}
    Fix the base dimension $d\geqslant 1$ and the differential order $k\geqslant 1$ of the complete generalised Wronskian determinant $W_d^k$; set $N={d+k\choose d}$. Recall from Definition~\ref{def:wronsk-shift-and-monomial-degree} that the degree shift in coordinate $x^i$ by $W_d^k$ is $kN/(d+1)$. An $N$\nobreakdash-ary polynomial SH-Lie algebra $\EuA\subseteq \Bbbk[\bx]$ with the $N$\nobreakdash-ary bracket given by $W_d^k$ is called:
    \begin{enumerate}[label=({\it\roman*})]
        \item \emph{deficient} in coordinate $x^i$ if the maximal degree sum $D_i^{(N)}$ is strictly smaller than the index shift, that is, $D_i^{(N)}(\EuA)<kN/(d+1)$;
        \item \emph{exact} in coordinate $x^i$ if $D_i^{(N)}(\EuA)=kN/(d+1)$;
        \item \emph{abundant} in coordinate $x^i$ if $D_i^{(N)}(\EuA)>kN/(d+1)$; and
        \item \emph{promising} if the algebra $\EuA$ is abundant in every coordinate $x^1,...,x^d$.
    \end{enumerate}
    (The definition of promising is na\"ive and will likely be revised in the future research.)
\end{define}

This choice of terminology is revealed by the following lemma. Recall the deficiency mechanism for the vanishing of the complete generalised Vandermonde determinant of power $d$-tuples of (genuine) monomials: it is~\ref{prop8-iii:prop-sufficient-vandermonde:deficiency} from Proposition~\ref{prop:sufficient-vandermonde-zero} on p.~\pageref{prop:sufficient-vandermonde-zero}. 
The two statements follow by applying the degree shift by the Wronskian.

\begin{lem}[deficient vs exact algebra]\label{lem:deficient-vs-exact}
    Fix any base dimension $d\geqslant 1$ and differential order $k\geqslant 1$ of the complete generalised Wronskian determinant $W_d^k$; set $N={d+k\choose k}$. If an $N$\nobreakdash-ary polynomial SH-Lie algebra $\EuA\subseteq \Bbbk[\bx]$, with $W_d^k$ as the $N$\nobreakdash-ary bracket, is \emph{deficient} in at least one coordinate $x^i$, then \emph{every} choice of $N$ arguments from $\EuA$ for the Wronskian will yield zero. (Thus $\EuA$ is `trivially' closed under $W_d^k$.)

    \noindent$\bullet$\quad If $\EuA\subseteq \Bbbk[\bx]$ is exact in coordinate $x^i$ for some $i\in\{1,...,d\}$, then \emph{every} choice of arguments will result in a polynomial of degree zero in coordinate $x^i$. (Thus $\EuA$ cannot be perfect.)
\end{lem}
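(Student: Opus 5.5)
The plan is to reduce both claims to monomial arguments and then read off degrees from the degree-shift formula. Let $p_1,\dots,p_N\in\EuA$ be arbitrary. By multilinearity, expand each $p_j=\sum_{\alpha_j}a_j^{\alpha_j}$ into its constituent monomials, so that
\[
W_d^k(p_1,\dots,p_N)=\sum_{\alpha_1,\dots,\alpha_N}W_d^k\bigl(a_1^{\alpha_1},\dots,a_N^{\alpha_N}\bigr).
\]
Any term whose monomials $a_1^{\alpha_1},\dots,a_N^{\alpha_N}$ are linearly dependent vanishes: for monomials, linear dependence means two of them are proportional, so the Wronskian has two proportional columns; equivalently, this is Proposition~\ref{prop:sufficient-vandermonde-zero}\,\ref{prop8-i:duplicate columns}. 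If the $p_j$ themselves are dependent, the whole bracket vanishes by antisymmetry. Hence we may assume the $p_j$ are linearly independent and keep only those tuples $\{a_j^{\alpha_j}\}$ lying in $A(P)$. For every such tuple, Definition~\ref{def:maximal-degree-sum} gives
\[
\sum_{j=1}^N\deg_i a_j^{\alpha_j}\leqslant D_i^{(N)}(\EuA).
\]

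For the deficient case, suppose $D_i^{(N)}(\EuA)<kN/(d+1)$. Then each surviving tuple has degree sum in $x^i$ strictly below the degree shift. Proposition~\ref{prop:sufficient-vandermonde-zero}\,\ref{prop8-iii:prop-sufficient-vandermonde:deficiency} then forces $\det\Van_d^k=0$. Alternatively, by Theorem~\ref{thm:general-vandermonde} the result would be a Laurent monomial with negative power of $x^i$, which is impossible for a Wronskian of genuine polynomials. So every term vanishes, and therefore the bracket vanishes.

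For the exact case, suppose $D_i^{(N)}(\EuA)=kN/(d+1)$. Each surviving term either vanishes by the same deficiency argument (when its degree sum is strictly smaller) or has degree sum exactly $kN/(d+1)$. In the latter case, Theorem~\ref{thm:general-vandermonde} (equivalently, Corollary~\ref{corr:degree-shift}) shows the resulting monomial has $\deg_i=kN/(d+1)-kN/(d+1)=0$. Summing over all terms, the bracket lies in the span of monomials free of $x^i$.

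It remains to see why $\EuA$ cannot then be perfect. If $\EuA$ contained any polynomial involving $x^i$, that polynomial could not lie in $\Span_\Bbbk W_d^k(\EuA,\dots,\EuA)$, so the span is a proper subspace. The one caveat is the degenerate case where no element of $\EuA$ involves $x^i$ at all. Then $D_i^{(N)}=0$, and exactness would force $kN/(d+1)=0$, which is impossible since $k,d\geqslant1$; so in the exact case some element does depend on $x^i$. This degenerate case is the only point needing care; everything else is a direct application of the degree shift.
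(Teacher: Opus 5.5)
Your proposal is correct and takes the same route as the paper, which proves the lemma in one line: reduce to monomial arguments by multilinearity, then apply the degree shift of Theorem~\ref{thm:general-vandermonde} together with the deficiency criterion of Proposition~\ref{prop:sufficient-vandermonde-zero}\,\ref{prop8-iii:prop-sufficient-vandermonde:deficiency}. Your explicit check that exactness forces some element of $\EuA$ to involve $x^i$ (since $kN/(d+1)\geqslant 1$) is what makes the non-perfectness clause go through, and the paper leaves that detail implicit.
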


\begin{rem}\label{rem:number-exact-coods-asymptotics}
    The number of exact coordinates and the minimal number of arguments necessary to attain the maximum in~\eqref{eq:max-degree-sum-polynomials} determine the (maximal bounds for) the asymptotic behaviour of iterating the bracket of arguments from $\EuA$. Indeed, the maximal degree of all polynomial $p(\bx)\in \EuA$ in an exact coordinate is bounded by $kN/(d+1)$.

    It takes great effort to make such notions precise; relevant results will be announced separately. (Cf. \cite{GelfandKirillov} for the notion of an asymptotic dimension of a (SH-)Lie algebra under iterating the bracket.)
\end{rem}

\subsubsection*{Acknowledgements}
The first author thanks the Center for Information Technology of the University of Groningen for access to the High Performance Computing cluster, H\'abr\'ok. 
The second author thanks the organisers of the international workshop
`Combinatorics \& arithmetic for Physics: special days' -- CAP25 (19--21 November 2025 at the IH\'ES in Bures\/-\/sur\/-\/Yvette, France), where this line of research was presented, for a warm atmosphere during the meeting; the second author thanks M.\,Kontsevich, V.\,Retakh, and O.\,Gaber for helpful discussions and advice. 
The second author thanks also the organisers of the Prague Mathematical Physics seminar, where this work was presented on 7 May 2026, for partial financial support and hospitality.
This work has been partially supported by the Bernoulli Institute (Groningen, NL) via project~110135.

\newpage%
\appendix
\section{Re-proof of the main formula~\eqref{eq:golden-formula} by expansion: the lowest differential order case $k=1$}\label{SecAppExpansion}

\noindent Put $p(\bx)=\bx^\bnvec=(x^1)^{n_1}\cdots (x^d)^{n_d}$ and $q(\bx)=\bx^\bmvec=(x^1)^{m_1}\cdots (x^d)^{m_d}$ over the coordinates $\bx=(x^1,...,x^d)$. Let us compute $W_{d\geqslant 2}^{k=1}(p,x^2,x^3,...,x^d,q)$. By Theorem~\ref{thm:general-vandermonde} the result is $pq/x$ multiplied by the complete generalised Vandermonde determinant:
\begin{align*}
    \det \begin{vmatrix}
        1  &1&1&1&\cdots&1&1&1\\
        n_1&0&0&0& &&&m_1\\
        n_2&1&0&0& &&&m_2\\
        n_3& &1&0& &&&m_3\\
        \vdots& &&& &&&\vdots\\
        n_{d-1}& &&& &1&0&m_{d-1}\\
        n_d& &&& &&1&m_1
    \end{vmatrix}.
\end{align*}
The expansion by the first column, split into three parts, yields
\begin{multline}\label{eq:three-part-vandermonde}
    (-)^{d-1}m_1 - n_1\det \begin{vmatrix}
        1&1&\cdots&1&1\\
        1& &      & &m_2\\
         &1&      & &m_3\\
         & &\ddots& &\vdots\\
         & &      &1&m_d
    \end{vmatrix}
    +\\+ \sum_{j=2}^d (-)^j n_j \det \begin{vmatrix}
        1&1&1&\cdots &1&1&1&\cdots &1&1\\
        0& & &       & & & &       & &m_1\\
        1& & &       & & & &       & &m_2\\
         &1& &       & & & &       & &m_3\\
         & &1&       & & & &       & &m_4\\
         & & &\ddots & & & &       & &\vdots\\
         & & &       &1&0& &       & &m_{j-1}\\
         & & &       & &0&1&       & &m_{j+1}\\
         & & &       & &\vspace{-2.5ex}\underbrace{{ }}_{j\text{th}} & &\ddots & &\vdots\\
         & & &       & & & &       &1&m_d
    \end{vmatrix},
\end{multline}
where the determinant in the second term simplifies to
\begin{equation*}
    (-)^{d-2}m_2 - (-)^{d-3}m_3 + (-)^{d-4}m_4 -\cdots +(-)^{d}(-)^{d-d}(m_d-1) = (-)^{d-2}(m_2+\cdots+m_d-1)
\end{equation*}
by expansion over the first column $(1,1,0,...)$, repeated inductively until a $2\times 2$ determinant is reached.

In the third determinant in~\eqref{eq:three-part-vandermonde} cycle the first $j$ columns (i.e.\ until the $(1,...,0,0,...)^\top$ column), giving a sign $(-)^{j-1}$, and then cycle all rows except the first, giving a sign of $(-)^{d-2}$; the determinant becomes diagonal and equals $m_1$. When multiplied by $(-)^jn_j$, it yields the $j$th term $-(-)^{d-1}n_j m_1$ in the sum in~\eqref{eq:three-part-vandermonde}; the sum hence becomes $-(-)^{d-1}m_1(n_2+\cdots n_d)$.

Combining the results above turns~\eqref{eq:three-part-vandermonde} into
\begin{multline*}
    (-)^{d-1}\Bigl\{m_1 + n_1(m_2+\cdots+m_d-1)-m_1(n_2+\cdots+n_d) \Bigr\} =\\
    = (-)^{d-1} \{ (\deg q -1)n_1 - (\deg p - 1)m_1 \}.
\end{multline*}
This then gives, upon moving the monomial $q(\bx)$ from the last to the first argument,
\begin{align*}
    W_{d\geqslant 2}^{k=1}(q,p,x^2,x^3,...,x^d) &=(-)^d (-)^{d-1} \{ (\deg q -1)n_1 - (\deg p - 1)m_1 \}\frac{pq}{x^1} \\
    &= \{(\deg p - 1)m_1 - (\deg q -1)n_1\}\frac{pq}{x^1}.
\end{align*}
To recover the form in formula in Example~\ref{ex:arbitrary-replaced-monomial} (on p.~\pageref{ex:arbitrary-replaced-monomial}) apply the change of coordinates $x^1\leftrightarrows x^j$ and move the monomial $p(\bx)$ to the position at $x^j$. The sign changes from both actions cancel identically as they correspond to the same cyclic permutation. This completes the proof.
\hfill$\square$

\section{Proof of Lemma~\ref{lem:consistency}}\label{SecAppConsistency}
\begin{proof} 
    In each co\"ordinate $x^i$ we require $\sum_{j=1}^N \deg_i(a_j)=kN/(d+1)$ and summatively $\sum_{j=1}^N \deg(a_j)=kdN/(d+1)$. We take all constants out of the Wronskian, making each $a_j$ monic. Represent $a_j(\bx)=\bx^{\bkvec_j}=(x^i)^{k_j^1}\cdots (x^d)^{k_j^d}$, where $\bkvec_j=(k_j^1,...,k_j^d)\in \BBN_{\geqslant 0}^d$. Define a `norm' $||\bkvec_j||=k_j^1+\cdots+ k_j^d=\deg(a_j)$. The summative degree-sum conditions states that $\sum_{j=1}^N ||\bkvec_j||=kdN/(d+1)$. Set $S=\{\bkvec_1,...,\bkvec_N\}$ and define the number of monomials of degree $t$: $n_t:=\#\{\bkvec\in S: ||\bkvec||=t\}$. 

    We note that if all $||\bkvec||\leq k$, then there are only $N$ possible (independent) choices -- the $\bkvec$'s of the standard monomials (cf.\ Notation~\ref{def:divpowermonomial-standardmonomial} on p.~\pageref{def:divpowermonomial-standardmonomial}), i.e.\ $\bkvec_j=\brvec_j$. We call this set $S_0=\{\brvec_j:1\leqslant j \leqslant N \}$ and define $m_t=\#\{\bkvec\in S_0:||\bkvec||=t\}$.

    Denote the summative degree-sum $T_S=\sum_{\bkvec\in S}=\sum_{t\geq 0} tn_t$ and $T_{S_0}=\sum_{\brvec_j\in S_0}||\brvec_j||$; the numbers must satisfy $T_S=T_{S_0}=kdN/(d+1)$.
    We proceed to show that if any $||\bkvec_j||>k$, then $T_S>T_{S_0}$, giving a contradiction.

    (1.) If all $||\bkvec_j||>k$, then $T_S=\sum_{t\geqslant 0} tn_t> kN>kN\cdot d/(d+1)=T_{S_0
    }$, a contradiction.

    (2.) Let $t_1,...,t_\ell >k$ be all degrees $t>k$ for which $n_t>0$. Suppose for contradiction that such degrees exist, i.e.\, $\ell>0$. As $\#S=N$ we have
    \begin{equation}
        n_0+n_1+\cdots + n_k=N-(n_{t_1}+\cdots+n_{t_\ell})=:N-n.
    \end{equation}
    Take any $\bkvec\in S$ with $||\bkvec||=t_j$, $1\leqslant j\leqslant \ell$. Then some $n_s$ for $s\leqslant k$ must be strictly smaller than $m_s$. As $s\leqslant k$ we get $t_j-s\geqslant t_j-k$. We call $s=s_j^\nu$, where for each such $||\bkvec||=t_j$ we assign an index $1\leqslant \nu \leqslant n_{t_j}$.

    (3.) We now expand the sum
    \begin{equation}
    \begin{split}
        T_S &=\sum_{t\geq 0} tn_t = \sum_{t=0}^k tn_t + \sum_{j=1}^\ell t_j n_{t_j}
        = T_{S_0} - \sum_{j=1}^\ell \left\{\sum_{\nu=1}^{n_{t_j}} s_j^\nu \right\} + \sum_{j=1}^\ell t_j n_{t_j} \\
        &= T_{S_0} + \sum_{j=1}^\ell \left\{ t_j n_{t_j} - \sum_{\nu=1}^{n_{t_j}} s_j^\nu  \right\},
    \end{split}
    \end{equation}
    where for each $j$ value
    \begin{equation}
    \begin{split}
        tn_{t_j}- \sum_{\nu=1}^{n_{t_j}} s_j^\nu 
        &= t_j - s_j^1 + t_j - s_j^2 + \cdots + t_j - s_j^{n_{t_j}} \\
        &\geqslant t_j-k + \cdots + t_j-k = (t_j-k)n_{t_j}>0,
    \end{split}
    \end{equation}
    hence $T_S \geqslant T_{S_0} + \sum_{j=1}^\ell (t_j-k)n_{t_j}$ gives $T_S>T_{S_0}$, a contradiction. Therefore $\ell=0$.
\end{proof}


\section{Python scripts}\label{SecAppPyScripts}

\begin{lstlisting}[language=Python, caption={Computation of the Wronskian determinant over base dimension $d$ with coordinates $(x,y,z,...)$ of differential order $k$; monomial $q$ replaces the standard argument $1$ and $p$ replaces any chosen standard argument.},captionpos=t, label={script:Wronskian-pq}]
import sympy as sp
import itertools as its
import math


x, y, z, w, r, t, u, s  = sp.symbols('x y z w r t u s')
n1,n2,n3,n4,n5,n6,n7,n8 = sp.symbols('n1 n2 n3 n4 n5 n6 n7 n8')
m1,m2,m3,m4,m5,m6,m7,m8 = sp.symbols('m1 m2 m3 m4 m5 m6 m7 m8')
degp, degq = sp.symbols('degp degq')
p, q = sp.symbols('p q')


def difflist(L:list, var):
    """
    Differentiates the arguments in the list L by the differential operator
    composed of var: for instance, the second mixed dervative in x and y is
    given by var=(x,y,).
    """
    out = []
    for i in range(len(L)):
        if type(var)==tuple:out.append(sp.diff(L[i],*var))
        else:
            out.append(sp.diff(L[i],var))

    return out


def wronskian_any(k,ns,ms,xis=[x,y,z,w,r,t,u,s],prepl=(x,),excls=[],printout=True):
    """
    Computes the Wronskian-determinant multibracket of order k of
    [p, ... ,q], where p = (x^i)^{n_i} and q = (x^i)^{m_i} for n_i in ns
    and m_i in ms; the length of ns and ms determine the dimension d.
        By setting prepl to a variable combination other than (x,), the
    term p may replace prepl rather than x (1st position).
        For incomplete Wronskians, excls defines the list of excluded
    differentials.
    """
    pv = 1
    qv = 1
    if len(ns)!=len(ms):
        print("p,q dimension mismatch!")
        return None

    xis = xis[0:len(ns)]
    for i,ni in enumerate(ns):
        pv *= xis[i]**ni
    for i,mi in enumerate(ms):
        qv *= xis[i]**mi

    row = [pv]
    ders = []
    for ordr in range(1,k+1):
        for comb in its.combinations_with_replacement(xis, ordr):
            if comb not in excls: #exclusions in incomplete Wronskian
                ders.append(comb)

                if comb==(*prepl,):continue #replacement by p
                else:
                    row.append(math.prod(comb))
    row.append(qv)


    M = sp.Matrix([row]+[difflist(row,der) for der in ders])

    det_M = sp.factor(M.det())

    if printout:
        #sp.pprint(M)
        print(f"Wronskian k={k}, d={len(ns)}")
        print(f"p replaces term: {prepl}")
        print(f"excluded derivatives: {excls}")
        print(f"{row} = {det_M}")

    return det_M


# execution example
w = wronskian_any(2,[n1,n2],[m1,m2],prepl=(x,))
\end{lstlisting}

\begin{lstlisting}[language=Python, caption={Computation of the Wronskian of $N$ arbitrary arguments under \texttt{row} for any choice of differential order $k$ and base dimension $d$.},captionpos=t, label={script:Wronskian-any-arguments}]
import sympy as sp
import itertools as its
import math


x, y, z, w, r, t, u, s  = sp.symbols('x y z w r t u s')
n1,n2,n3,n4,n5,n6,n7,n8 = sp.symbols('n1 n2 n3 n4 n5 n6 n7 n8')
m1,m2,m3,m4,m5,m6,m7,m8 = sp.symbols('m1 m2 m3 m4 m5 m6 m7 m8')
l1,l2,l3,l4,l5,l6,l7,l8 = sp.symbols('l1 l2 l3 l4 l5 l6 l7 l8')


def difflist(L:list, var):
    out = []
    for i in range(len(L)):
        if type(var)==tuple:out.append(sp.diff(L[i],*var))
        else:
            out.append(sp.diff(L[i],var))

    return out


def wronskian_row(k,d,row,xis=[x,y,z,w,r,t,u,s],printout=True):
    """
    Calculates the Wronskian of the list of N arguments in row.
    """
    
    xis = xis[0:d]

    ders = []
    coeff = 1
    for ordr in range(1,k+1):
        for comb in its.combinations_with_replacement(xis, ordr):
            ders.append(comb)

            coeff *= math.factorial(comb.count(x))
            coeff *= math.factorial(comb.count(y))

    M = sp.Matrix([row]+[difflist(row,der) for der in ders])

    det_M = sp.factor(M.det())

    if printout:
        #sp.pprint(M)
        print(f"Wronskian k={k}, d={d}; coeff = {coeff}")
        print(f"{row} = {det_M}")

    return det_M


# execution example
row = [x**l1*y**l2,x**m1*y**m2,x**n1*y**n2]
wr = wronskian_row(1,2,row)
\end{lstlisting}

\medskip
\noindent$\boldsymbol{\copyright}$\qquad
The copyright for all newly designed software modules which are 
presented in this paper is retained by M.\,G.\,\cb{K}\=eni\cb{n}\v{s}; 
provisions of the~MIT free software license apply.

\section{Raw computational outputs}\label{SecAppRawOutputs}

\noindent
Set the base dimension $d=2$ and the differential order $k=2,3,4,5$. Compute the Wronskian determinant of arguments $p,y,x^2,xy,...,y^k,q$, where $p=x^{n_1}y^{n_2}$ and $q=x^{m_1}y^{m_2}$ are arbitrary monomials. The raw outputs of such computations by Script~\ref{script:Wronskian-pq}, executing the input commands,
\begin{lstlisting}[language=Python]
w = wronskian_any(2,[n1,n2],[m1,m2],prepl=(x,))
w = wronskian_any(3,[n1,n2],[m1,m2],prepl=(x,))
w = wronskian_any(4,[n1,n2],[m1,m2],prepl=(x,))
w = wronskian_any(5,[n1,n2],[m1,m2],prepl=(x,))
\end{lstlisting}
are given below:\footnote{In Python the symbol \texttt{*} denotes multiplication, whereas \texttt{**} denotes exponentiation (e.g.\ $x^2 = \text{\texttt{x**2}}$).}
\begin{lstlisting}
Wronskian k=2, d=2
p replaces term: (x,)
excluded derivatives: []
[x**n1*y**n2, y, x**2, x*y, y**2, x**m1*y**m2] = -2*x**m1*x**n1*y**m2*y**n2*(m1 + m2 - 2)*(n1 + n2 - 2)*(m1*n2 - m1 - m2*n1 + n1)/x


Wronskian k=3, d=2
p replaces term: (x,)
excluded derivatives: []
[x**n1*y**n2, y, x**2, x*y, y**2, x**3, x**2*y, x*y**2, y**3, x**m1*y**m2] = -48*x**m1*x**n1*y**m2*y**n2*(m1 + m2 - 3)*(m1 + m2 - 2)*(n1 + n2 - 3)*(n1 + n2 - 2)*(m1*n2 - m1 - m2*n1 + n1)/x


Wronskian k=4, d=2
p replaces term: (x,)
excluded derivatives: []
[x**n1*y**n2, y, x**2, x*y, y**2, x**3, x**2*y, x*y**2, y**3, x**4, x**3*y, x**2*y**2, x*y**3, y**4, x**m1*y**m2] = 331776*x**m1*x**n1*y**m2*y**n2*(m1 + m2 - 4)*(m1 + m2 - 3)*(m1 + m2 - 2)*(n1 + n2 - 4)*(n1 + n2 - 3)*(n1 + n2 - 2)*(m1*n2 - m1 - m2*n1 + n1)/x


Wronskian k=5, d=2
p replaces term: (x,)
excluded derivatives: []
[x**n1*y**n2, y, x**2, x*y, y**2, x**3, x**2*y, x*y**2, y**3, x**4, x**3*y, x**2*y**2, x*y**3, y**4, x**5, x**4*y, x**3*y**2, x**2*y**3, x*y**4, y**5, x**m1*y**m2] = 19813556551680*x**m1*x**n1*y**m2*y**n2*(m1 + m2 - 5)*(m1 + m2 - 4)*(m1 + m2 - 3)*(m1 + m2 - 2)*(n1 + n2 - 5)*(n1 + n2 - 4)*(n1 + n2 - 3)*(n1 + n2 - 2)*(m1*n2 - m1 - m2*n1 + n1)/x
\end{lstlisting}

The raw outputs of Script~\ref{script:Wronskian-pq}, setting the first optional argument to \texttt{prepl=(x,x,)}, namely, the monomial $p$ replaces the standard argument $(x)^2$, were
\begin{lstlisting}
Wronskian k=2, d=2
p replaces term: (x, x)
excluded derivatives: []
[x**n1*y**n2, x, y, x*y, y**2, x**m1*y**m2] = x**m1*x**n1*y**m2*y**n2*(2*m1**2*n1*n2 - 2*m1**2*n1 + m1**2*n2**2 - 3*m1**2*n2 + 2*m1**2 - 2*m1*m2*n1**2 + 2*m1*m2*n1 + 2*m1*n1**2 - 2*m1*n1*n2 - m1*n2**2 + 3*m1*n2 - 2*m1 - m2**2*n1**2 + m2**2*n1 + 3*m2*n1**2 - 3*m2*n1 - 2*n1**2 + 2*n1)/x**2

Wronskian k=3, d=2
p replaces term: (x, x)
excluded derivatives: []
[x**n1*y**n2, x, y, x*y, y**2, x**3, x**2*y, x*y**2, y**3, x**m1*y**m2] = 48*x**m1*x**n1*y**m2*y**n2*(m1 + m2 - 3)*(n1 + n2 - 3)*(2*m1**2*n1*n2 - 2*m1**2*n1 + m1**2*n2**2 - 3*m1**2*n2 + 2*m1**2 - 2*m1*m2*n1**2 + 2*m1*m2*n1 + 2*m1*n1**2 - 2*m1*n1*n2 - m1*n2**2 + 3*m1*n2 - 2*m1 - m2**2*n1**2 + m2**2*n1 + 3*m2*n1**2 - 3*m2*n1 - 2*n1**2 + 2*n1)/x**2


Wronskian k=4, d=2
p replaces term: (x, x)
excluded derivatives: []
[x**n1*y**n2, x, y, x*y, y**2, x**3, x**2*y, x*y**2, y**3, x**4, x**3*y, x**2*y**2, x*y**3, y**4, x**m1*y**m2] = -497664*x**m1*x**n1*y**m2*y**n2*(m1 + m2 - 4)*(m1 + m2 - 3)*(n1 + n2 - 4)*(n1 + n2 - 3)*(2*m1**2*n1*n2 - 2*m1**2*n1 + m1**2*n2**2 - 3*m1**2*n2 + 2*m1**2 - 2*m1*m2*n1**2 + 2*m1*m2*n1 + 2*m1*n1**2 - 2*m1*n1*n2 - m1*n2**2 + 3*m1*n2 - 2*m1 - m2**2*n1**2 + m2**2*n1 + 3*m2*n1**2 - 3*m2*n1 - 2*n1**2 + 2*n1)/x**2


Wronskian k=5, d=2
p replaces term: (x, x)
excluded derivatives: []
[x**n1*y**n2, x, y, x*y, y**2, x**3, x**2*y, x*y**2, y**3, x**4, x**3*y, x**2*y**2, x*y**3, y**4, x**5, x**4*y, x**3*y**2, x**2*y**3, x*y**4, y**5, x**m1*y**m2] = -39627113103360*x**m1*x**n1*y**m2*y**n2*(m1 + m2 - 5)*(m1 + m2 - 4)*(m1 + m2 - 3)*(n1 + n2 - 5)*(n1 + n2 - 4)*(n1 + n2 - 3)*(2*m1**2*n1*n2 - 2*m1**2*n1 + m1**2*n2**2 - 3*m1**2*n2 + 2*m1**2 - 2*m1*m2*n1**2 + 2*m1*m2*n1 + 2*m1*n1**2 - 2*m1*n1*n2 - m1*n2**2 + 3*m1*n2 - 2*m1 - m2**2*n1**2 + m2**2*n1 + 3*m2*n1**2 - 3*m2*n1 - 2*n1**2 + 2*n1)/x**2

\end{lstlisting}

\end{document}